\newtheorem{theorem}{Theorem}[section]
\newtheorem{proposition}{Proposition}[theorem]
\newtheorem{lemma}{Lemma}[theorem]
\newtheorem{corollary}{Corollary}[theorem]
\newtheorem*{definition}{Definition}
\newcommand{\Set}{\textbf{Set}}
\newcommand{\Sub}{\texttt{Sub}}
\newcommand{\C}{\textbf{C}}
\newcommand{\T}{\textsl{T}}
\newcommand{\op}{^{op}}
\newcommand{\D}{\Delta}
\newcommand{\Dn}{\Delta^n}
\newcommand{\Dm}{\Delta^m}
\newcommand{\too}[1]{\overset{#1}{\longrightarrow}}
\newcommand{\hookar}[1]{\overset{#1}{\hookrightarrow}}
\newcommand{\colim}[1]{ \underset{#1}{\texttt{colim}} \text{ } }
\newcommand{\fset}[1]{ \{ #1_0 , \dots , #1_n \} }
\newcommand{\Ima}{\texttt{Im}}
\newcommand{\ims}{\Ima(\sigma)}
\newcommand{\imt}{\Ima(\tau)}
\newcommand{\G}{\mathcal{G}}
\newcommand{\Sh}{\texttt{Sh}}
\newcommand{\IpC}{IC_\cdot^{\bar{p}}}
\newcommand{\s}{\sigma}
\newcommand{\lst}{\texttt{lst}}
\newcommand{\st}{\texttt{st}}
\newcommand{\lsts}{\lst(\s)}
\newcommand{\fst}{\texttt{fst}}
\newcommand{\mst}[1]{\texttt{mst}_{#1}}
\title{A construction of intersection cohomology from a simplicial version of the Deligne axioms}
\author{Sebastian Cea}
\date{December 2022}
\begin{document}
\pagenumbering{Roman}
\maketitle
\begin{abstract}
Intersection (co)homology is a way to enhance classical (co)homology, allowing us to use a famous result called Poincaré duality on a large class of spaces known as stratiﬁed pseudomanifolds. There is a theoretically powerful way to arrive at intersection (co)homology by a classifying sheaves that satisfy what are called the Deligne axioms.

We establish an abstract manifestation of the Deligne axioms, to then apply it on a simplicial complex environment, for a category of simplicial sheaves inspired on the works of D. Chataur, D. Tanré and M. Saralegi-Araguren.
For a stablished topology on a triangulation of a stratified pseudomanifold, we find a family of sheaves satisfying the simplicial Deligne axioms, giving us a way to construct intersection cohomology from simplicial sheaves.

\end{abstract}

\tableofcontents

\newpage
\pagenumbering{arabic}
\setcounter{page}{1}
\maketitle

\section{Introduction}

Intersection (co)homology is a powerful enhancement of classical (co)homology, which extends Poincar\'e duality to an important class of spaces known as stratified pseudomanifolds. Its birthplace actually comes from the classification of manifolds, since it was noticed that to study manifolds it would be necessary to extend invariants to a category of \textit{manifolds with singularities}.

One particularly important invariant on this matter was the signature, which can be derived from Poincar\'e duality. So, in \cite{GoreskyMacPhersonIH1} M. Goresky and R. MacPherson solved the ambitious question of finding a generalization of (co)homology on a category of manifolds with singularities. These were called stratified pseudomanifolds, and are good enough as to contain algebraic varieties. We refer to \cite{Friedman} for a complete survey on intersection (co)homology.

Basically, we take on a stratified space $X_0 \subseteq X_1 \subseteq \dots \subseteq X_n=X$ which is almost manifolds (in the sense that each $X_k-X_{k-1}$ is a $k$-manifold and $X-X_{n-2}$ is dense in $X$), and locally conelike, a subcomplex $\IpC(X)$ of the chain complex by discriminating chains with a perversity function $\bar{p}$. The (co)homology of this complex is called intersection (co)homology.

It was a huge success. Nowadays, it has an important role in geometry and applications even in representation theory (see for instance \cite{KazhdanLusztigHistory}).

Now, just as it happens with homology, \textit{there are multiple ways to construct intersection cohomology}, for example:
\begin{itemize}
    \item The classical way, presented by M. Goresky and R. MacPherson in \cite{GoreskyMacPhersonIH1}.
    
    \item In \cite{King}, King gives a \textit{singular} (hence non-simplicial) construction of intersection homology.
    
    \item In \cite{Pollini}, G. Pollini gives a construction of intersection homology using \textit{differential forms}. 
    
    \item In \cite{BlownupDel}, D. Chataur, M. Saralegui-Araguren and D. Tanr\'e construct intersection cohomology with a chain inspired in the \textit{blown up} of (filtered) simplices.
\end{itemize}
There are in fact plenty of models for intersection (co)homology. Given this apparent jungle, one would wish to call upon a characterization up to quasi isomorphism of these functors. 

Thankfully, there are four axioms which characterize the sheaves whose hyper cohomology is intersection cohomology (so we work in the derived category of sheaves complexes). These axioms are called \textit{Deligne's axioms}, in honor to P. Deligne, who had the main idea that would be followed and proved by M. Goresky and R. MacPherson in their second paper on the subject \cite{GoreskyMacPhersonIH2}. The axioms read literally in the original paper as follows

\begin{center}
    Let $S^\cdot$ be a complex of sheaves on $X$, which is constructible with respect to the stratification $\{X_k \}$ and let $S^\cdot_k=S^\cdot|_{X_k-X_{n-k}}$. We shall say $S^\cdot$ satisfies the axioms [AX1] (with perversity $\bar{p}$ with respect to the stratification) provided
    \begin{enumerate}
        \item[(a)] Normalization: $S^\cdot|_{X-\Sigma} \simeq F[n]$, where $F$ is a local system on the regular strata $X-\Sigma$.
        \item[(b)] Lower bound: $H^i(S^\cdot)=0$ for all $i<-n$.
        \item[(c)] Vanishing condition: $H^m(S^\cdot_{k+1})=0$ for all $m>p(k)-n$.
        \item[(d)] Attaching: $S^\cdot$ is $p(k)-n$ attached across each stratum of codimension $k$, i.e., the attaching maps
        $$ H^m(j_k^\star S^\cdot_{k+1}) \to H^m(j_k^\star R(\iota_k)_\star \iota_k^\star S_{k+1}^\cdot) $$
        are isomorphisms for all $k \geq 2$ and all $m \leq \bar{p}(k)-n$.
    \end{enumerate}
\end{center}
They prove that $I^pC^\cdot$, the chain complex whose homology is intersection homology, naturally defines a complex of sheaves that satisfies these axioms for the constant sheaf $F=\mathbb{R}_{X-X_{n-2}}$. Furthermore, they prove that
\begin{center}
    $S^\cdot$ satisfies [AX 1] $\iff$ $S^\cdot \simeq \tau_{\leq {\bar{p}}(n)-n}R (\iota_n)_\star \dots \tau_{\leq {\bar{p}}(2)-n}R(\iota_2)_\star F[n]$
\end{center}
Where the previous isomorphism considered at the level of the derived category. This result is very important as \textbf{it characterizes up to quasi isomorphism all the complexes of sheaves whose hyperhomology is intersection homology}, furthermore giving a quasi isomorphism to the so called \textit{Deligne sheaf}, which is given by the formula $ \tau_{\leq {\bar{p}}(n)-n}R (\iota_n)_\star \dots \tau_{\leq {\bar{p}}(2)-n}R(\iota_2)_\star \mathbb{R}_{X-X_{n-2}}[n]$.

The Deligne axioms are an important cornerstone for developments that enrich the intersection homology theory. Some examples of why we want them are the following.

\begin{itemize}
    \item It provides an easy to check list to see if one's complex of sheaves computes intersection homology. Furthermore, it gives a \textit{map} between different models (sheaves) for intersection homology and the intersection sheaf.
    
    \item Using sheaf theory provides an alternative way to prove many properties. For example it was first proven in \cite{GoreskyMacPhersonIH2} using sheaves that intersection homology is a topological invariant, before it was proven geometrically much later by H. King in \cite{King}.
    
    \item The use of sheaf theory also provides a way to study properties of intersection homology from a local point of view.
    
    \item The shape of the intersection sheaf $\tau_{\leq {\bar{p}}(n)-n}R (\iota_n)_\star \dots \tau_{\leq {\bar{p}}(2)-n}R(\iota_2)_\star \mathbb{R}_{X-\Sigma}[n]$ makes one think that we can extend information from the non-singular strata. This intuition is in fact correct.
\end{itemize}

In the later years there has been signs that point out to the necessity of abstracting the Deligne axioms, and state them in different contexts, in particular for simplicial structures. As the reader probably knows, one can use simplicial structures such as simplicial complexes, simplicial sets, delta sets and cellular complexes, to greatly simplify the computations of (co)homology and other invariants. One can even state that homotopy theory can be worked simplicially. The trick is to actually define the invariants on the corresponding simplicial category, and then proving that it is equivalent to the topological one via a nice functor, so we have for example
$$ H^\D_n(X) \simeq H_n(|X|) $$
The same trick can be done for intersection (co)homology, and this is exposed and expanded in great detail in the works of D. Chataur, M. Saralegui-Aranguren and D. Tanré (see \cite{IHRationalHomotopy}, \cite{SimplicialIH} and \cite{BlownupDel}). 

Now, just as it happens with homology, \textit{there are multiple ways to construct \textbf{simplicial} intersection cohomology}.

\begin{itemize}
    \item Intersection cochain complex $IC^p_\D$, constructed by discriminating simplices $\Dm \too{\s} X \to \Dn$ by the dimension of their filtration.
    \item The \textit{simplicial} blown up cochains, which are constructed in \cite{BlownupIH}.
 
    \item A generalization of the previous functor, extending into considering Sullivan polinomials, was done on \cite{IHRationalHomotopy}.
\end{itemize}

Opening the question for \textit{simplicial} Deligne axioms. Furthermore, the works done on \cite{EinfAlg} point to the goodness of having Deligne axioms in a $E_\infty$-algebra environment.

The Deligne axioms, as we show, can be stated in very general contexts without much problems, allowing us to move between different environments, giving us the first important statement of the paper.

\vspace{0.2 cm}
\textbf{Abstract Deligne Axioms:}

Given a chain of morphisms of categories
\begin{equation} \label{equation chain of functors}
    \begin{tikzcd}
    \C_0 \arrow[shift left=1]{r}[above]{\iota_0} & \C_1 \arrow[shift left=1]{l}[below]{j_0} \arrow[shift left=1]{r}[above]{\iota_1} & \dots \arrow[shift left=1]{l}[below]{j_1} \arrow[shift left=1]{r}[above]{\iota_{n-1}} & \C_n \arrow[shift left=1]{l}[below]{j_{n-1}} \arrow[shift left=1]{r}[above]{\iota_n} & \C_{n+1}=\C \arrow[shift left=1]{l}[below]{j_{n}}
    \end{tikzcd}
\end{equation}
And for a function $p: \mathbb{N} \to \mathbb{Z}$, endofunctors $\tau_{p(k)}^i: \C_i \to \C_i \text{ } k \in \{0, \dots , n \},i \in \{ k,k+1 \}$, satisfying
\begin{enumerate}\renewcommand{\theenumi}{\roman{enumi}}
    \item $j_k \circ \iota_k \simeq 1_{\C_k}$ for all $k \in \{0, \dots, n \}$
    
    \item $\tau_{p(k+1)}^{k} \circ \tau_{p(k)}^{k} \simeq \tau_{p(k)}^{k}$ for all $k$
    
    \item $\tau_{p(k)}^k \circ \tau_{p(k)}^k \simeq \tau_{p(k)}^k$ for all $k$
    
    \item $j_k \circ \tau_{p(k)}^k \simeq \tau_{p(k)}^{k-1} \circ j_k$
\end{enumerate}
We name $j^k=j_k \circ \dots \circ j_n : \C \to \C_k$ for $k \in \{0, \dots , n \}$ and $j^{n+1}=1_\C : \C \to \C$. 

\begin{definition}
In the setting just described, given $F_0 \in \C_0$ such that $\tau_{p(0)}F_0 \simeq F_0$ and $p: \mathbb{N} \to \mathbb{Z}$, we say that $A \in \C$ is of class $Del_{F_0}$ if it satisfies
\begin{enumerate}
    \item[(AX 1)] $j^0A \simeq F_0$ 
    \item[(AX 2)] $j^{k+1}A \simeq \tau_{p(k)} \iota_k j^k A$ $\forall k \in \{0, \dots, n \}$
\end{enumerate}
\end{definition}

And in this context we prove the following proposition.
\begin{proposition} \label{prop 111}
$A \in \C$ is of class $Del_{F_0} \Leftrightarrow A \simeq \tau_{p(n)} \iota_n \dots \tau_{p(0)} \iota_0 F_0$
\end{proposition}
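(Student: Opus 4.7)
The plan is to introduce the auxiliary tower $B_k := \tau_{p(k)} \iota_k \cdots \tau_{p(0)} \iota_0 F_0 \in \C_{k+1}$, so that $B_k \simeq \tau_{p(k)} \iota_k B_{k-1}$ for $k \geq 1$, $B_0 = \tau_{p(0)} \iota_0 F_0$, and the object on the right-hand side of the proposition is exactly $B_n$. In both directions the statement then reduces to verifying the single identity
$$ j^{k+1} A \simeq B_k \qquad \text{for every } k \in \{0, \dots, n\}. $$

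For $(\Rightarrow)$ I argue by ascending induction on $k$. The base case $k = 0$ follows by chaining (AX 2) at $k=0$ with (AX 1): $j^1 A \simeq \tau_{p(0)} \iota_0 j^0 A \simeq \tau_{p(0)} \iota_0 F_0 = B_0$. The inductive step is immediate from (AX 2) together with the induction hypothesis: $j^{k+1} A \simeq \tau_{p(k)} \iota_k j^k A \simeq \tau_{p(k)} \iota_k B_{k-1} = B_k$. Specializing at $k = n$ and using $j^{n+1} = 1_\C$ yields $A \simeq B_n$, which is the target formula.

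For $(\Leftarrow)$ I set $A := B_n$ and prove the same identity by descending induction on $k$. The base case $k = n$ is tautological. For the step I compute $j^k A = j_k\, j^{k+1} A \simeq j_k\bigl(\tau_{p(k)} \iota_k B_{k-1}\bigr)$, push $j_k$ through $\tau_{p(k)}$ by property (iv), cancel $j_k \iota_k$ by (i), and obtain $\tau_{p(k)} B_{k-1}$. Since $B_{k-1}$ itself already has outer truncation $\tau_{p(k-1)}$, property (ii) absorbs the extra $\tau_{p(k)}$, giving $j^k A \simeq B_{k-1}$. Axiom (AX 2) is then immediate because $\tau_{p(k)} \iota_k j^k A \simeq \tau_{p(k)} \iota_k B_{k-1} = B_k \simeq j^{k+1} A$, and (AX 1) follows from the same calculation at $k = 0$, using the hypothesis $\tau_{p(0)} F_0 \simeq F_0$ to conclude $j^0 A \simeq \tau_{p(0)} F_0 \simeq F_0$.

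The main obstacle I expect is the careful bookkeeping of the superscripts on the endofunctors $\tau_{p(k)}^i$ as one passes between $\C_k$ and $\C_{k+1}$ in the $(\Leftarrow)$ induction: each application of (iv) lowers the ambient category, and without (ii) to absorb the surplus truncation into the pre-existing $\tau_{p(k-1)}$ head of $B_{k-1}$, the reduction would produce a growing cascade of stray truncations rather than returning cleanly to the canonical form $B_{k-1}$. The axiom package (i)--(iv) is precisely engineered to make this collapse happen at every step, after which both inductions are purely formal.
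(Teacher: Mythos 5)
Your proposal is correct and follows essentially the same route as the paper: the paper likewise introduces the partial towers $\textit{P}_k=\tau_{p(k)}\iota_k\cdots\tau_{p(0)}\iota_0 F_0$ (your $B_k$), proves $j_k\textit{P}_k\simeq \textit{P}_{k-1}$ by commuting $j_k$ past the truncation, cancelling $j_k\iota_k$, and absorbing the surplus truncation (with the $k=0$ case handled via $\tau_{p(0)}F_0\simeq F_0$), and unrolls the axioms for the forward direction exactly as in your ascending induction. The only difference is presentational (induction versus iterated substitution), so no further comment is needed.
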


The tricky part is to ground the Deligne axioms by actually finding chain of sheaves that satisfy the axioms. There is one fruitful environment in which we can stablish a family of chains of sheaves that satisfy the axioms, namely, we take a simplicial complex that is a triangulation of a PL space and give it a particular topology. This is fruitful as it actually gives us novel ways to compute intersection cohomology, as the main results of the paper show. The environment goes as follows. 

\begin{enumerate}
    \item We take $X$ a simplicial complex such that $|X|$ is a PL stratified pseudomanifold with stratification $|X_0|\leq \dots \leq |X_n|=|X|$, with $X_k$ is a subcomplex of $X$.
    \item We suppose that $X$ is subdivided two times, $X=Sd^2(X')$.
    \item We take $\T \subset \Sub(X)$ the topology generated by $\{ \underset{\s < \tau}{\bigcup} \st(b_\tau)| \s \in X' \}$, where $b_\tau$ corresponds to the barycenter of $\tau$ and $\st(b_\tau)$ to its star.
    \item We take $U_k^\D = X -_\D X_{n-k}=\{ \s \in X | |\s| \cap X_{n-k}= \emptyset \}$.
\end{enumerate}
For $p: \mathbb{N} \to \mathbb{Z}$ a GM perversity, we take the usual truncation functors as $\tau_{p(k)}$ and as for the chain of functors in (\ref{equation chain of functors}) we take the derived functors of $(\iota_k^\star,(\iota_k)_\star)$, with $U_k \hookar{\iota_k} U_{k+1}$. The simplicial Deligne axioms then read as follows

\begin{definition}
$F \in D_\D(\mathcal{T})$ is said to satisfy the $\D$-Deligne axioms if
\begin{enumerate}
    \item[(AX 1)] $ F|_{U_{0}^\D} \simeq \mathbb{R}_{U_{0}^\D}$ 
    \item[(AX 2)] $F|_{U_{k+1}^\D} \simeq \tau_{p(k)} (\iota_k)_\star F|_{U_{k}^\D}$ $\forall k \in \{0, \dots, n \}$
\end{enumerate}
\end{definition}
And we have as a consequence of proposition \ref{prop 111}, this characterized by the Deligne sheaf.
\begin{proposition} \label{prop 112}
The sheaf $F \in D_\D(\mathcal{T})$ satisfies the $\D$-Deligne axioms if and only if $F \simeq \tau_{p(n)} \iota_n \dots \tau_{p(0)} \iota_0 \mathbb{R}_{U_{0}^\D}$.
\end{proposition}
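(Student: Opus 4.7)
The strategy is to invoke Proposition \ref{prop 111} after specializing its data to the simplicial-sheaf setting described in the paragraph preceding the statement. The argument therefore splits into: (a) recognising that the $\D$-Deligne axioms become literally (AX 1)--(AX 2) after this specialization, and (b) verifying the four preparatory hypotheses (i)--(iv) that the abstract proposition imposes on its underlying data.

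For (a), I would set $\C_k := D_\D(\mathcal{T}|_{U_k^\D})$, take $\iota_k$ to be the derived pushforward $R(\iota_k)_\star$ along the open inclusion $U_k^\D \hookrightarrow U_{k+1}^\D$, take $j_k = \iota_k^\star$ to be the restriction, take $\tau_{p(k)}$ to be the usual $t$-structure truncation, and take $F_0 = \mathbb{R}_{U_0^\D}$. Under these identifications, $j^k = j_k \circ \cdots \circ j_n$ is just restriction to $U_k^\D$, so the two axiom systems match on the nose. The additional requirement $\tau_{p(0)} F_0 \simeq F_0$ holds because $\mathbb{R}_{U_0^\D}$ is concentrated in degree zero and $p(0) \geq 0$ in the customary shift convention for a GM perversity.

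For (b), the four hypotheses unfold as follows. Property (i), $\iota_k^\star R(\iota_k)_\star \simeq \mathrm{id}$, is the standard fact that restricting a pushforward back along an open inclusion recovers the sheaf. Property (iii) is idempotence of $\tau_{\leq p(k)}$, which is formal for any $t$-structure. Property (ii), $\tau_{\leq p(k+1)} \tau_{\leq p(k)} \simeq \tau_{\leq p(k)}$, uses the monotonicity $p(k) \leq p(k+1)$ of a GM perversity together with the identity $\tau_{\leq a} \tau_{\leq b} \simeq \tau_{\leq \min(a,b)}$ valid in any $t$-structure. Property (iv), the commutation of restriction with truncation, holds because $j_k = \iota_k^\star$ is exact (as a pullback along an open inclusion) and exact functors commute with truncation. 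With all of this in place, Proposition \ref{prop 111} immediately produces the claimed equivalence.

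The principal obstacle — really the only step where something specific to this paper must be done rather than cited from standard sheaf theory — is checking that these formal properties actually hold in the bespoke category $D_\D(\mathcal{T})$ of simplicial sheaves over the combinatorial topology $\T$. One needs a well-behaved $t$-structure on each $D_\D(\mathcal{T}|_{U_k^\D})$, exactness of restriction along the $\T$-open inclusions $U_k^\D \hookrightarrow U_{k+1}^\D$, and the existence and right adjointness of $R(\iota_k)_\star$. Once these ingredients, presumably set up earlier in the paper, are granted, the proof of Proposition \ref{prop 112} reduces to a direct application of Proposition \ref{prop 111}.
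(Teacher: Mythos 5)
Your proposal is correct and follows essentially the same route as the paper: the paper obtains this proposition purely as an instance of the abstract characterization (Proposition \ref{prop 111} / Theorem \ref{teo deligne}), taking $\C_k = D_\D$ on $U_k^\D$, the derived functors of $(\iota_k^\star,(\iota_k)_\star)$ for the adjoint pair, and the usual truncations for $\tau_{p(k)}$, exactly as you do. If anything, you are more careful than the paper, which asserts rather than verifies the hypotheses (i)--(iv); your checks (open-inclusion unit $\iota_k^\star R(\iota_k)_\star \simeq \mathrm{id}$, idempotence and monotone composition of truncations via $p(k)\leq p(k+1)$, exactness of restriction) are precisely what is implicitly being used.
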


\vspace{0.2 cm}
\textbf{The main results:} 

We find sheaves that satisfy the simplicial Deligne axioms. Thanks to our functor $\Phi:\Sh(|X|) \to \Sh_\D^\T(X)$, which takes soft sheaves to flasque ones (proposition \ref{proposition Phi soft to flasque}), we have plenty of those.

We state our main results, which both come as corollaries of proposition \ref{prop 112} and the result that is proven in the last section.
\begin{center}
$\Phi(I^pC_\cdot)$ satisfies the $\D$-Deligne axioms.
\end{center} 

We have two consequences of this, the first one is that $\Phi$ takes Deligne sheaves to $\D$-Deligne ones.

\begin{theorem} \label{teo 12}
Given $X$ a simplicial complex such that $|X|$ is a PL stratified pseudomanifold with stratification $|X_0|\leq \dots \leq |X_n|=|X|$, with $X_k$ is a subcomplex of $X$. We suppose that $X$ is subdivided two times, $X=Sd^2(X')$, and take the topology generated by $\{ \underset{\s < \tau}{\bigcup} \st(b_\tau)| \s \in X' \}$. In this setting we have
\begin{center}
    If $F \in D(|X|)$ satisfies the Deligne axioms, then $\Phi(F)$ satisfies $\D$-Deligne axioms.
\end{center}
\end{theorem}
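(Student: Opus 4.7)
The plan is to reduce the theorem to the main computational input announced just above it, namely that $\Phi(I^pC_\cdot)$ satisfies the $\D$-Deligne axioms, combined with the characterizations of Deligne sheaves (classical and simplicial) up to quasi-isomorphism.

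First I would invoke the classical Goresky--MacPherson characterization recalled in the introduction: since both $F$ and $I^pC_\cdot$ satisfy the Deligne axioms with coefficients $\mathbb{R}_{X - X_{n-2}}$, they are each quasi-isomorphic to the Deligne sheaf $\tau_{\leq \bar{p}(n)-n}R(\iota_n)_\star \cdots \tau_{\leq \bar{p}(2)-n}R(\iota_2)_\star \mathbb{R}_{X - X_{n-2}}[n]$, and hence to each other, so $F \simeq I^pC_\cdot$ in $D(|X|)$.

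Next I would transport this quasi-isomorphism along $\Phi$ to obtain $\Phi(F) \simeq \Phi(I^pC_\cdot)$ in $D_\D(\mathcal{T})$. Since the hypothesis on $F$ lives in the derived category while $\Phi$ is originally defined at the abelian level $\Sh(|X|) \to \Sh_\D^\T(X)$, the careful point is to work with the derived extension of $\Phi$: resolve $F$ and $I^pC_\cdot$ by soft sheaves, apply $\Phi$ termwise, and use Proposition \ref{proposition Phi soft to flasque} to see that the resulting complexes are flasque and therefore adapted to the functors used to form $D_\D(\mathcal{T})$. A quasi-isomorphism between soft resolutions on $|X|$ then yields a quasi-isomorphism of flasque complexes on $(X, \T)$.

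Finally, by the main input $\Phi(I^pC_\cdot)$ satisfies the $\D$-Deligne axioms, so by Proposition \ref{prop 112} we have $\Phi(I^pC_\cdot) \simeq \tau_{p(n)}\iota_n \cdots \tau_{p(0)}\iota_0 \mathbb{R}_{U_{0}^\D}$. Combined with the previous step, $\Phi(F)$ is quasi-isomorphic to the same simplicial Deligne sheaf, and Proposition \ref{prop 112} applied in the reverse direction gives that $\Phi(F)$ satisfies the $\D$-Deligne axioms. The main obstacle is precisely the second step: verifying that the derived extension of $\Phi$ actually carries the abstract equivalence $F \simeq I^pC_\cdot$ in $D(|X|)$ to an equivalence $\Phi(F) \simeq \Phi(I^pC_\cdot)$ in $D_\D(\mathcal{T})$. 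The soft-to-flasque property is the natural tool, but one still has to check that flasque sheaves in the topology $\T$ form an acyclic class for the functors used to build $D_\D(\mathcal{T})$, and this is where I expect most of the technical work to lie.
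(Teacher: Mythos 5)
Your proposal follows essentially the same route as the paper's own proof: reduce to $F \simeq \IpC$ via the classical characterization, push the quasi-isomorphism through $\Phi$ to get $\Phi(F) \simeq \Phi(\IpC)$, and conclude from $\Phi(\IpC) \in Del_\D$ together with the characterization of the $\D$-Deligne class up to quasi-isomorphism (Theorem \ref{teo deligne}/Proposition \ref{prop 112}). The only difference is that you flag the derived functoriality of $\Phi$ as a point needing care, a step the paper passes over silently by writing $F \simeq \IpC \Rightarrow \Phi(F) \simeq \Phi(\IpC)$ directly.
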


Now, since $\IpC$ is soft we have that $\Phi(\IpC)$ is flasque, and this implies that
$$\mathbb{H}^\star(X,\Phi(\IpC)) \simeq IH^{\bar{p}}(|X|) $$
That is, the hyperhomology of this sheaf corresponds to the intersection homology of the realization. This is a fact shared by all the sheaves that satisfy the $\D$-Deligne axioms.

\begin{theorem} \label{corollary 121}
Consider a PL stratified pseudomanifold $X$ with a compatible triangulation $|K| \too{\simeq} X$. Take $K'=Sd^2(K)$ and the topology $\T$ generated by $\{ \underset{\s < \tau}{\bigcup} \st(b_\tau)| \s \in K \}$ on $K'$. If $F \in D_\D(\T)$ satisfies the $\D$-Deligne axioms, then
$$ \mathbb{H}^\star(K',F) \simeq IH_p^\star(X) $$
\end{theorem}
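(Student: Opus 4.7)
The theorem is essentially a formal consequence of Proposition \ref{prop 112} together with the asserted fact that $\Phi(\IpC)$ satisfies the $\D$-Deligne axioms. The strategy is to transfer the problem from an arbitrary $\D$-Deligne sheaf $F$ to the concrete sheaf $\Phi(\IpC)$, whose hypercohomology we already know.

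First I would apply Proposition \ref{prop 112} twice. Once to $F$, which is $\D$-Deligne by hypothesis, and once to $\Phi(\IpC)$, which is $\D$-Deligne by the main results stated in the introduction. Each of the two sheaves is thereby identified in $D_\D(\T)$ with the canonical Deligne sheaf $\tau_{p(n)} \iota_n \cdots \tau_{p(0)} \iota_0 \mathbb{R}_{U_0^\D}$, so by transitivity $F \simeq \Phi(\IpC)$ in $D_\D(\T)$.

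Next, since $\mathbb{H}^\star(K', -)$ is well defined on the derived category, this quasi-isomorphism descends to $\mathbb{H}^\star(K', F) \simeq \mathbb{H}^\star(K', \Phi(\IpC))$. To conclude I would invoke the computation promised in the main results: because $\IpC$ is soft on $|X|$, proposition \ref{proposition Phi soft to flasque} makes $\Phi(\IpC)$ flasque on $(K', \T)$, so the hypercohomology collapses to the cohomology of $\Gamma(K', \Phi(\IpC))$, which the paper identifies with a model for $IH^\star_p(X)$.

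The main obstacle is not the present corollary, which is a three-line chain of quasi-isomorphisms once the pieces are in hand, but rather the two inputs it relies on: namely, verifying that $\Phi(\IpC)$ satisfies the $\D$-Deligne axioms (so Proposition \ref{prop 112} can be applied to it) and showing that the flasque global sections of $\Phi(\IpC)$ recover intersection cohomology of $X$. Both of these are the subject of the last section of the paper and can be assumed here.
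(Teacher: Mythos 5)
Your proposal matches the paper's own argument: the paper also combines the characterization of $\D$-Deligne sheaves up to quasi-isomorphism (Theorem \ref{teo deligne}, i.e.\ Proposition \ref{prop 112}) with Proposition \ref{prop Phi(IpC) satisfies DDeligne} to get $F \simeq \Phi(\IpC)$ in $D_\D(\T)$, and then uses the flasqueness of $\Phi(\IpC)$ (Proposition \ref{proposition Phi soft to flasque}) to identify $\mathbb{H}^\star(K',\Phi(\IpC))$ with $H^\star(\IpC(|X|)) \simeq IH^{\bar{p}}_\star(|X|)$. So the argument is correct and essentially identical to the paper's, with the same two nontrivial inputs deferred to the final section.
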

Giving us ways to compute intersection cohomology.

\section{Background knowledge on Simplicial Complexes} \label{section simplicial complexes}

Given that the main object of study for us are PL stratified pseudomanifolds, which are spaces endowed with a family of triangulations, it will be pleasant to have a good presentation of simplicial complexes. We work with two classical presentations (abstract and topological). We will see a slim view of the abstract presentation, together with some operations of interest, and leave the topological as reference, since all the results we need there can be found in \cite{RourkeSanderson}. 





\begin{definition}
A\textbf{ (abstract) simplicial complex} $K$ is the information of a set (of vertices) $V(K)$ and a set (of simplices) $S(K) \subset \mathcal{P}(V(K))-\emptyset$ such that
\begin{enumerate}
    \item $\{ v \} \in S(K) $ for all $v \in V(K)$
    \item If $\s \in S(K) $ and $\tau \subset \s$ then $ \tau \in S(K)$
\end{enumerate}
\end{definition}

Basically we encapsulate topological information in a combinatorial way. The idea is that each set $\s \in S(K)$ corresponds to a simplex of $K$ and the elements of $\s \subset V(K)$ are its vertices. We say that $\tau$ is a \textbf{face} of $\s$ (written as $\tau < \s$) if $\tau \subset \s$. We abuse notation by calling $K$ to $S(K)$.



Observe that the face relation is a partial order. We call the category associated with it the \textbf{category of simplices} of $K$, and denote it $\D(K)$. We also call $K_n=\{ \s \in K | \text{ } \sharp \s=n+1 \}$ the simplices of dimension $n$ of $K$.

\begin{definition}
    Given $K$ and $L$ simplicial complexes.
    \begin{itemize}
        
        \item A \textbf{morphism of simplicial complexes} $K \too{f} L$ is a function $f:V(K) \to V(L)$ such that $f(\s) \in L$ for all $\s\in K$.
        \item A subset $Y \subset K$ is a \textbf{subcomplex} if for all $\s \in Y$ $\tau \subset \s \Rightarrow \tau \in Y$.
    \end{itemize}
    
\end{definition}

Observe that given a morphism $K \too{f} L$, we can define a function $f:K \to L$ by taking the set image $\s \mapsto f(\s)$, which will satisfy $\tau < \s \Rightarrow f(\tau)<f(\s)$. We will usually work with this function. 

Observe also that if $Y \subset K$ is a subcomplex, we are making $Y$ into a complex by taking $V(Y)= \bigcup_{\s\in Y} \s$. We name $\Sub(K)$ the set of subcomplexes of $K$, they form a category under inclusion. The following proposition shows that some usual constructions in $\Set$ form subcomplexes.

\begin{proposition} \label{Set operations on Sset}
Let $K$ be a simplicial complex, we have that

\begin{enumerate}
    \item For $\{ Y^i \}_{i \in I}$ a family of subcomplexes of $K$, 
    \begin{itemize}
        \item $\bigcup_{i \in I} Y^i \leq K$ with $(\bigcup_{i \in I} Y^i)_n=\bigcup_{i \in I} Y^i_n$
        \item $\bigcap_{i \in I} Y^i \leq K$ with $(\bigcap_{i \in I} Y^i)_n=\bigcap_{i \in I} Y^i_n$
    \end{itemize}
      
    \item For a morphism $f:K \to L$ of simplicial complexes, $\Ima(f) \leq L$
    
    \item For a morphism $f:K \to K' $ of simplicial complexes, and subcomplexes $Z \leq K$ and $Z' \leq K'$ we have that $f^{-1}(Z') \leq K$ and $f(Z)=\Ima(f \circ \iota_Z) \leq K'$ (where $\iota_Z: Z \hookrightarrow K$ is the inclusion)
\end{enumerate}
\end{proposition}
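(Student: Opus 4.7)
The statement is a routine verification that standard set-theoretic constructions on the simplex sets preserve the two axioms in the definition of an abstract simplicial complex. All three parts amount to checking the face-closure condition: if $\sigma$ is in the constructed family and $\tau \subset \sigma$, then $\tau$ is also in it. The plan is to go through the parts in the order they are stated and unfold the definitions.

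For part (1), given a family $\{Y^i\}_{i \in I}$ of subcomplexes and $\sigma \in \bigcup_i Y^i$, I would pick an index $i$ with $\sigma \in Y^i$; any $\tau \subset \sigma$ then lies in $Y^i$ by face-closure of $Y^i$, hence in the union. For the intersection, $\sigma \in \bigcap_i Y^i$ forces $\tau \in Y^i$ for every $i$ by the same reasoning, hence $\tau \in \bigcap_i Y^i$. The dimension-wise equalities $(\bigcup Y^i)_n = \bigcup Y^i_n$ and similarly for intersections are immediate, since membership in the $n$-part is characterized by having exactly $n+1$ vertices and is therefore insensitive to the set operation.

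For part (2), the non-trivial step is to show that $\Ima(f)$ is face-closed. Given $\sigma \in K$ and $\tau' \subset f(\sigma)$, the key observation is that each vertex $w \in \tau'$ is $f(v_w)$ for some $v_w \in \sigma$; setting $\tau = \{v_w : w \in \tau'\}$ gives a subset of $\sigma$, hence $\tau \in K$ by face-closure of $K$, and $f(\tau) = \tau' \in \Ima(f)$ by construction. This little preimage-lifting argument is the only step that requires any thought; it is essentially finite choice over the vertices of $\tau'$ and is the main (minor) obstacle in the proof.

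For part (3), the preimage $f^{-1}(Z') = \{\sigma \in K : f(\sigma) \in Z'\}$ is handled by the observation that if $\tau \subset \sigma$ then $f(\tau) \subset f(\sigma) \in Z'$, so face-closure of $Z'$ gives $f(\tau) \in Z'$, i.e.\ $\tau \in f^{-1}(Z')$. For $f(Z)$, I would simply note that the composition $f \circ \iota_Z: Z \to K'$ is a morphism of simplicial complexes, and apply part (2) to it, recovering $f(Z) = \Ima(f \circ \iota_Z) \leq K'$ without further work.
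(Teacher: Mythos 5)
Your proposal is correct and follows essentially the same route as the paper's proof: face-closure checks for unions, intersections and preimages, the vertex-lifting argument for $\Ima(f)$, and deducing the image statement in part (3) from part (2) applied to $f \circ \iota_Z$. No gaps to report.
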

From $1.$ and $2.$ of this proposition we have an interesting corollary.

\begin{corollary} \label{Sub(X) topological space}
$(K,\Sub(K))$ is a topological space.
\end{corollary}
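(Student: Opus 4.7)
The plan is to verify the three axioms defining a topology on the underlying set $K = S(K)$ of simplices, taking $\Sub(K)$ as the candidate collection of open sets. Since the heavy lifting has already been done in Proposition \ref{Set operations on Sset}, the argument should be essentially a repackaging of its first item.

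First, I would check that both the empty set and the whole set $K$ lie in $\Sub(K)$. The empty subcomplex (with empty vertex set and empty simplex set) vacuously satisfies the face-closure axiom, and $K$ is trivially a subcomplex of itself; a brief sentence noting the conventional treatment of the empty subcomplex is enough. Next, I would invoke item 1 of Proposition \ref{Set operations on Sset}: arbitrary unions of subcomplexes are subcomplexes, giving the union axiom for a topology directly, and arbitrary intersections of subcomplexes are subcomplexes, which is even stronger than the finite intersection axiom required of a topology.

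Thus the three topology axioms follow immediately from the cited proposition, without any further computation. The main subtlety, rather than an obstacle, is notational: one must be careful to distinguish $K$ viewed as the underlying set of simplices (the ambient set on which $\Sub(K)$ defines open subsets) from $K$ viewed as a simplicial complex (so that subcomplexes are subsets of $K$ that are themselves closed under the face relation). Once this is clarified, the proof is a one-line consequence of Proposition \ref{Set operations on Sset}.
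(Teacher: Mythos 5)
Your proposal is correct and matches the paper's approach: the corollary is stated there precisely as an immediate consequence of item 1 of Proposition \ref{Set operations on Sset} (closure of $\Sub(K)$ under arbitrary unions and intersections), with the membership of $\emptyset$ and $K$ being trivial. Nothing further is needed.
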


And we also have of course many topological spaces whose open sets are not all subcomplexes. Let us see a proof of the proposition now.

\begin{proof}

\begin{enumerate}
    \item Given $\s \in \bigcup_{i \in I} Y^i$, say $\s \in Y^{i_0}$. Then for any $\tau < \s$, since $Y^{i_0} \leq K$, we have $\tau \in Y^{i_0} \subset \bigcup_{i \in I} Y^i$, and $(\bigcup_{i \in I} Y^i)_n=K_n \cap \bigcup_{i \in I} Y^i = \bigcup_{i \in I} K_n \cap Y^i= \bigcup_{i \in I} Y^i_n$. The proof for the intersection is analogous.
    \item Consider $f(\s)=\{f(\s_0), \dots , f(\s_m) \}\in \Ima(f)$ (here $\s_i\in \s$ $\forall i$), then if $\tau \subset f(\s)$, we have that $\tau= \{ f(\s_{i_0}), \dots , f(\s_{i_k}) \}=f(\{\s_{i_0}, \dots , \s_{i_k} \})$, and $\{\s_{i_0}, \dots , \s_{i_k} \} \in K$ since $K$ is a simplicial complex.
    \item If $f(\s) \in Z'$ and $\tau<\s$ then $f(\tau)<f(\s)$ by the previous observation and since $Z' \leq X$ then $f(\tau) \in Z'$. The second part of this statement is a consequence of $2.$
\end{enumerate}
\end{proof}

Even when a subset $S \subset K$ is in general not a subcomplex, we can consider the subcomplex generated by it
$$ <S> = \underset{S \subset Y \leq K}{\bigcap} Y $$
Which is simply $S$ plus the subsets of elements of $S$. We name $ \ims = <\{\s\}>  $. Observe that this is just $\mathcal{P}(\s)$. Abstractly we name $ \Dn = \mathcal{P}(\{0, \dots, n \}) $ (so $\ims \simeq \D^{\texttt{dim}(\s)}$)

We make some very easy to prove observations
\begin{proposition}
    Given $L\leq K$ complexes, $f:K \to K'$ a morphism and $\s,\tau \in K$
    \begin{enumerate}
        \item $\s \in Y$ if and only if $\ims \leq Y$
        \item $\s< \tau \iff \ims \leq \imt$
        \item $Y=\underset{\s\in Y}{\bigcup}\ims$
        \item $f(\ims)=\Ima(f(\s))$
    \end{enumerate}
\end{proposition}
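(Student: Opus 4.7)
The plan is to unfold the explicit description $\ims = \mathcal{P}(\s)$ recalled just before the proposition (and likewise $\imt = \mathcal{P}(\tau)$ and $\Ima(f(\s)) = \mathcal{P}(f(\s))$), and to verify each item by a direct set-theoretic computation. No categorical or homological machinery is required; everything reduces to the defining property of a subcomplex ($\tau \subseteq \s \in Y \Rightarrow \tau \in Y$) together with the elementary fact that morphisms of simplicial complexes preserve subset inclusion.

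For (1), starting from $\s \in Y$, the subcomplex property immediately gives $\mathcal{P}(\s) \subseteq Y$, i.e.\ $\ims \leq Y$; the converse is free from $\s \in \ims$. For (2), both sides are simply reformulations of $\s \subseteq \tau$: if $\s \subseteq \tau$ then any subset of $\s$ is a subset of $\tau$, and conversely $\s \in \ims \leq \imt$ forces $\s \in \mathcal{P}(\tau)$, hence $\s < \tau$. Item (3) follows by combining (1), which guarantees $\ims \leq Y$ for each $\s \in Y$, with the trivial observation $\s \in \ims$ supplying the reverse inclusion.

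The only item with any content is (4). The inclusion $f(\ims) \subseteq \Ima(f(\s))$ is immediate: for $\tau \in \ims$, i.e.\ $\tau \subseteq \s$, one has $f(\tau) \subseteq f(\s)$, so $f(\tau) \in \mathcal{P}(f(\s)) = \Ima(f(\s))$. For the reverse inclusion, given $\alpha \subseteq f(\s)$, I would pick for each $a \in \alpha$ some $v_a \in \s$ with $f(v_a) = a$ and set $\tau = \{v_a\}_{a \in \alpha} \subseteq \s$; then $\tau \in \ims$ and $f(\tau) = \alpha$. This preimage-selection step is the only place in the whole proposition where anything more than unwinding definitions is needed, and will constitute the main (still entirely elementary) obstacle.
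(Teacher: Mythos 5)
Your proposal is correct and follows essentially the same route as the paper: items (1)--(3) are the same unwinding of $\ims=\mathcal{P}(\s)$ and the subcomplex property, and your preimage-selection argument for (4) is exactly the elementary set-theoretic step the paper invokes (and already used earlier to show images of morphisms are subcomplexes), just written out in full.
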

\begin{proof}
    The first statement comes from the fact that $Y$ is a simplicial complex. The second statement is because $\s <\tau \Leftrightarrow \s \in \imt \Leftrightarrow \ims \leq \imt$. The third statement follows directly from the first. The last statement comes from elemental set theory.
\end{proof}
We can also define the product of simplicial complexes as the categorical one, giving us $|K \times L|=|K|\times |L|$. This product will have a nice presentation if we order the vertices.

\begin{definition}
Given $K$ and $L$ simplicial complexes with their vertices ordered. We define $L \times K$ as follows
\begin{itemize}
    \item $V(K \times L) = V(K) \times V(L)$
    \item $S(K \times L) = \{ \{(v_0,w_0), \dots ,(v_n,w_n) \} | \{v_0, \dots ,v_n \} \in S(K), \{w_0, \dots ,w_n \} \in S(L) \text{ and } v_0  \leq \dots \leq v_n , w_0 \leq \dots \leq w_n \}  \} $
 \end{itemize}
 \end{definition}

The operation that sadly does not give a subcomplex in a nice way .is the subtraction of sets. This is a very important operation for us since the Deligne axioms work greatly with subtractions (recall that we take $U_k= X-X_{n-k}$). We will replace this operation for the closest posible one.
$$ Y-_\D Z= \{ \s\in Y | Z \cap \ims = \emptyset \} = \{ \s \in Y | V(Z)\cap \s = \emptyset \} $$
Many properties of set subtraction are also true for this simplicial subtraction.

\begin{proposition} \label{prop -D}
Given $Y,Y',Z,Z' \leq X$ simplicial complexes.

\begin{enumerate}
    \item $Y-_\D Z= \bigcup \{ W \subset Y-Z| W \leq X \}  \leq Y$
    \item $Y-_\D \emptyset= Y$ and $Y-_\D Y= \emptyset$ (this is a version of $X^c= \emptyset$ and $\emptyset^c=X$)
    \item If $Z \leq Z'$ then $Y -_\D Z' \leq Y -_\D Z$
    \item $Y -_\D Z= Y \cap (X-_\D Z)$
    \item Morgan Laws: 
    \begin{enumerate}
        \item $X -_\D (Y \cup Z)= (X-_\D Y) \cap (X -_\D Z)$
        \item $(X-_\D Y) \cup (X -_\D Z) \leq X -_\D (Y \cap Z)$
    \end{enumerate}
    \item $Z -_\D Y \leq (X -_\D Y)  -_\D (X -_\D Z) $ (in particular $Z \leq X -_\D (X -_\D Z)$. Equality does not hold as we will see)
    \item If $Y \leq Z$ then $Y -_\D Z= \emptyset$
    \item If $Z' \leq Z$ then $Y -_\D Z= (Y -_\D Z') -_\D Z $
    \item $(X-_\D Y)-_\D Z = X -_\D (Y \cup Z)$
    \item Others
    \begin{enumerate}
        \item $Y \cap (Z -_\D Z')=(Y \cap Z) -_\D (Y \cap Z')$
        \item $(Z \cup Z') -_\D Y=(Z -_\D Y) \cup (Z' -_\D Y)$
        \item $(Z \cap Z') -_\D Y = (Z -_\D Y) \cap (Z' -_\D Y)$
        \item $Y -_\D (Y \cap Z)= Y -_\D Z=(Y \cup Z) -_\D Z$
        \item $(Y -_\D Y') \cap (Z -_\D Z') = (Z -_\D Y') \cap (Y -_\D Z') $
    \end{enumerate}
\end{enumerate}
\end{proposition}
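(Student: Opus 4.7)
The plan is to reduce every statement to set-theoretic identities on vertex sets via the description $Y -_\D Z = \{\sigma \in Y \mid V(Z) \cap \sigma = \emptyset\}$. I would first set up a small dictionary: for a subcomplex $W \leq X$ the vertex set $V(W) = \bigcup_{\sigma \in W}\sigma$ satisfies $v \in V(W) \iff \{v\} \in W$ (because subcomplexes are closed under faces), giving $V(A \cup B) = V(A) \cup V(B)$ and $V(A \cap B) \subset V(A) \cap V(B)$ with the latter inclusion generally strict. I would also record once the identity $V(X -_\D Z) = V(X) \setminus V(Z)$: a vertex $v$ of a simplex in $X -_\D Z$ yields $\{v\} \in X -_\D Z$ and so $v \notin V(Z)$, while conversely any $v \in V(X) \setminus V(Z)$ has $\{v\} \in X -_\D Z$ directly.

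For item~1 the argument splits in two. That $Y -_\D Z$ is a subcomplex of $Y$ is immediate, since the defining condition $V(Z) \cap \sigma = \emptyset$ passes to faces ($V(Z) \cap \tau \subset V(Z) \cap \sigma$ whenever $\tau \subset \sigma$). That it is the largest subcomplex of $X$ contained in the naive difference $Y - Z$ uses: if $\sigma \in Y -_\D Z$ then $\sigma \not\in Z$, for otherwise $\sigma \subset V(Z)$ would give $V(Z) \cap \sigma = \sigma \neq \emptyset$; and conversely, any subcomplex $W \leq X$ with $W \subset Y - Z$ must have every vertex-singleton $\{v\}$ of every $\sigma \in W$ still in $Y - Z$, so $\{v\} \not\in Z$, i.e.\ $v \not\in V(Z)$ for every $v \in \sigma$, forcing $\sigma \in Y -_\D Z$.

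With these preliminaries, items 2--9 and 10(a)--(e) reduce to routine vertex-set verifications. Items 2, 3, 4, 5(a), 8, 9, 10(b), and 10(c) all follow from the monotonicity $Z \leq Z' \Rightarrow V(Z) \subset V(Z')$ together with $V(A \cup B) = V(A) \cup V(B)$. Item 7 is the observation that $\sigma \in Y \leq Z$ forces $\sigma \subset V(Z)$, so $V(Z) \cap \sigma = \sigma \neq \emptyset$. Item 5(b) and the strictness of item 6 both trace back to the generally strict inclusion $V(A \cap B) \subset V(A) \cap V(B)$; a counterexample to equality in 6 (already in the case $Y = \emptyset$, where it specialises to $Z \leq X -_\D (X -_\D Z)$) is given by taking $X$ the edge with simplices $\{a\},\{b\},\{a,b\}$ and $Z$ its subcomplex $\{\{a\},\{b\}\}$ of vertices: then $V(Z) \cap \{a,b\} \neq \emptyset$ forces $X -_\D Z = \emptyset$, whence $X -_\D (X -_\D Z) = X \supsetneq Z$. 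Item 6 itself follows once $V(X -_\D Z) = V(X) \setminus V(Z)$ is in hand: $\sigma \in Z -_\D Y$ lies in $X -_\D Y$ directly, and $\sigma \in Z$ forces $\sigma \subset V(Z)$, so $\sigma \cap V(X -_\D Z) = \sigma \cap (V(X) \setminus V(Z)) = \emptyset$. The subtlest of the ten is 10(d), where one shows that for $\sigma \in Y$ the conditions $V(Z) \cap \sigma = \emptyset$ and $V(Y \cap Z) \cap \sigma = \emptyset$ are equivalent, because any $v \in V(Z) \cap \sigma$ would also lie in $\sigma \subset V(Y)$ and hence give $\{v\} \in Y \cap Z$, contradicting the second condition. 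The main obstacle is thus not conceptual but bureaucratic: nineteen subclaims demand setting up the vertex-set vocabulary once and invoking it uniformly, rather than unfolding the definition afresh each time.
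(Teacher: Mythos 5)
Your proof is correct and takes essentially the same approach as the paper: a direct, item-by-item verification from the definition, the paper phrasing each check via $\Ima(\s)\cap Z$ and you via $V(Z)\cap\s$, which are the two equivalent descriptions the definition already provides. One aside is inaccurate: for subcomplexes $A,B\leq X$ one always has $V(A\cap B)=V(A)\cap V(B)$ (a common vertex $v$ gives $\{v\}\in A$ and $\{v\}\in B$, hence $\{v\}\in A\cap B$), so the failure of equality in 5(b) and 6 does not ``trace back'' to strictness of that inclusion but to the fact that a simplex can meet both $V(Y)$ and $V(Z)$ while missing $V(Y\cap Z)$; since those items only assert inclusions, which you prove correctly, and your counterexample for 6 is valid, nothing load-bearing is affected.
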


\begin{proof}
\begin{enumerate}
    \item It is obvious that $Y -_\D Z \subset Y$. To see that it is a simplicial set notice that if $\s \in Y -_\D Z$ and $\tau < \s$ then $\Ima(\tau) \leq \ims$ and then $\ims \cap Z \subset \Ima(\tau) \cap Z = \emptyset$.

We see now that $Y -_\D Z$ is optimal. It is obvious that $Y -_\D Z \subset Y-Z$. Furthermore if we have a simplicial set $W \subset Y-Z$ and $\s \in W$, we will have that $\ims \leq W \subset Y-Z$ and therefore $\ims \cap Z= \emptyset$ and then $\s \in Y -_\D Z$.

\item $Y-_\D \emptyset = \{ \s \in Y | \ims \cap \emptyset= \emptyset \}=Y$ and $Y-_\D Y \subset Y- Y=\emptyset$

\item This comes from $\ims \cap Z \subset \ims \cap Z' = \emptyset$

\item This comes directly from the definitions.

\item The first Morgan law comes from the fact that $\ims \cap (Z \cup Y)=(\ims \cap Z) \cup (\ims \cap Y)$ is empty if only if $\ims \cap Y$ and $\ims \cap Z$ are empty. As for the second, if let us say $\ims \cap Y$ is empty, then $\ims \cap Y \cap Z$ is empty.

\item Given $\s \in Z$ with $\ims \cap Y=\emptyset$, we need to prove that $\ims \cap (X-_\D Z)=\emptyset$. Say that $\tau \in \ims \cap (X-_\D Z)$, since $\s \in Z$ then $\ims \leq Z$ and $\tau \in \ims \Rightarrow \imt \leq \ims \leq Z$, which contradicts that $\tau \in X-_\D Z$ (notice that $\imt$ is not empty since $\tau \in \imt$)

\item This comes from the fact that $\s \in Z$ implies that $\ims \cap Z \neq \emptyset$ (notice that $\ims$ is not empty since $\s \in \ims$)

\item Since $Y -_\D Z' \leq Y$ we have that $ (Y -_\D Z') -_\D Z \leq Y -_\D Z $. On the other direction, given $\s \in Y -_\D Z$, we have that $\s \in Y$ and $ \ims \cap Z = \emptyset$, which implies that $\ims \cap Z'= \emptyset$ since $Z' \leq Z$, and these three last properties mean that $\s \in (Y -_\D Z') -_\D Z$.

\item This comes from the fact that $\ims \cap (Z \cup Y)= (\ims \cap Z) \cup (\ims \cap Y)= \emptyset$ if and only if $\ims \cap Z = \emptyset$ and $\ims \cap Y= \emptyset$.

\item For statement (a), one inclusion comes from $\ims \cap (Y \cap Z') \subset \ims \cap Z'$ and the other inclusion follows from the fact that if $\s \in Y \cap Z \subset Y$ then $\ims \leq Y$ and therefore $\ims \cap Y \cap Z'= \emptyset \Rightarrow \ims \cap Z'= \emptyset$.

Statemens (b), (c) and (e) are straightforward. For the first equality of (e), one inclusion is a consequence of 3. using $Y \cap Z \leq Z$ and the second inclusion comes from the fact that if $\s \in Y$ then $\ims \leq Y$ and then $\ims \cap Y \cap Z= \emptyset$ implies $\ims \cap Z=\emptyset$. For the second equality $Y -_\D Z \leq (Y \cup Z) -_\D Z$ is obvious and for the other inclusion notice that if $\ims \cap Z=\emptyset$ then $\s$ is not in $Z$ since $\s \in \ims$.

\end{enumerate}

\end{proof}

In other words, $ -_\D $ functions as $-$ except for three important details, which is that in general we are missing the following
\begin{enumerate}
    \item $X -_\D (Y \cap Z) \leq (X-_\D Y) \cup (X -_\D Z) $
    \item $X -_\D (X -_\D Z) \leq Z$
    \item $Y -_\D Z= \emptyset \Longrightarrow Y \leq Z$
\end{enumerate}

A counterexample for the Morgan law is to take $X= \D^1$ and Y and Z as the two points of its boundary $\partial \D^1$. As for the other two properties $\partial \D^2 \leq \D^2$ provide counterexamples.

\begin{center}
\includegraphics[scale=0.6]{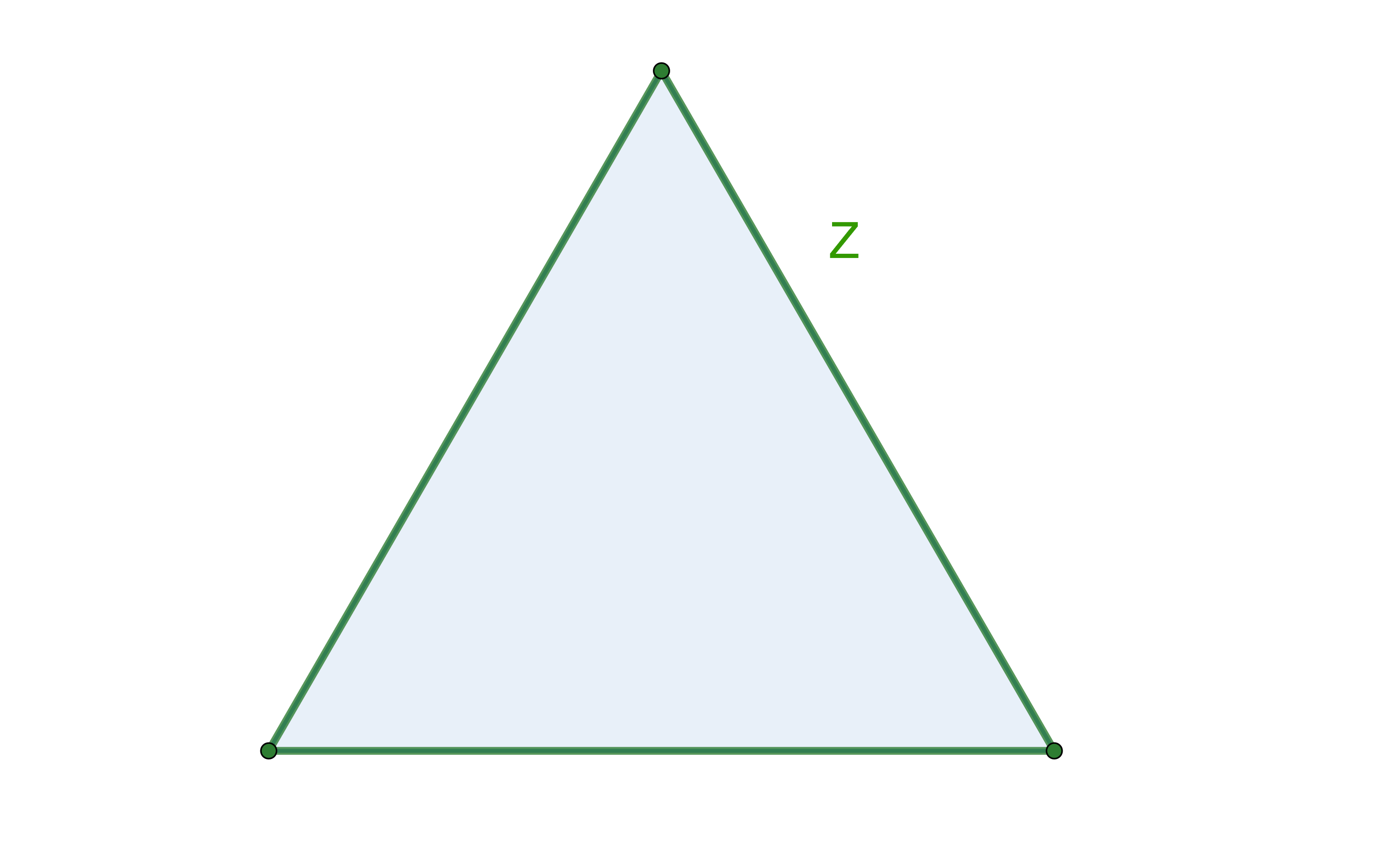}
\end{center}

The problem is that when we perform $X -_\D Z$, together with $Z$ we are also taking off an aura around it (technically a star), which in the example $\partial \D^2 \leq \D^2$ will take off all $\D^2$. This problem can be solved by \textit{subdividing}.

\begin{center}
\includegraphics[scale=0.6]{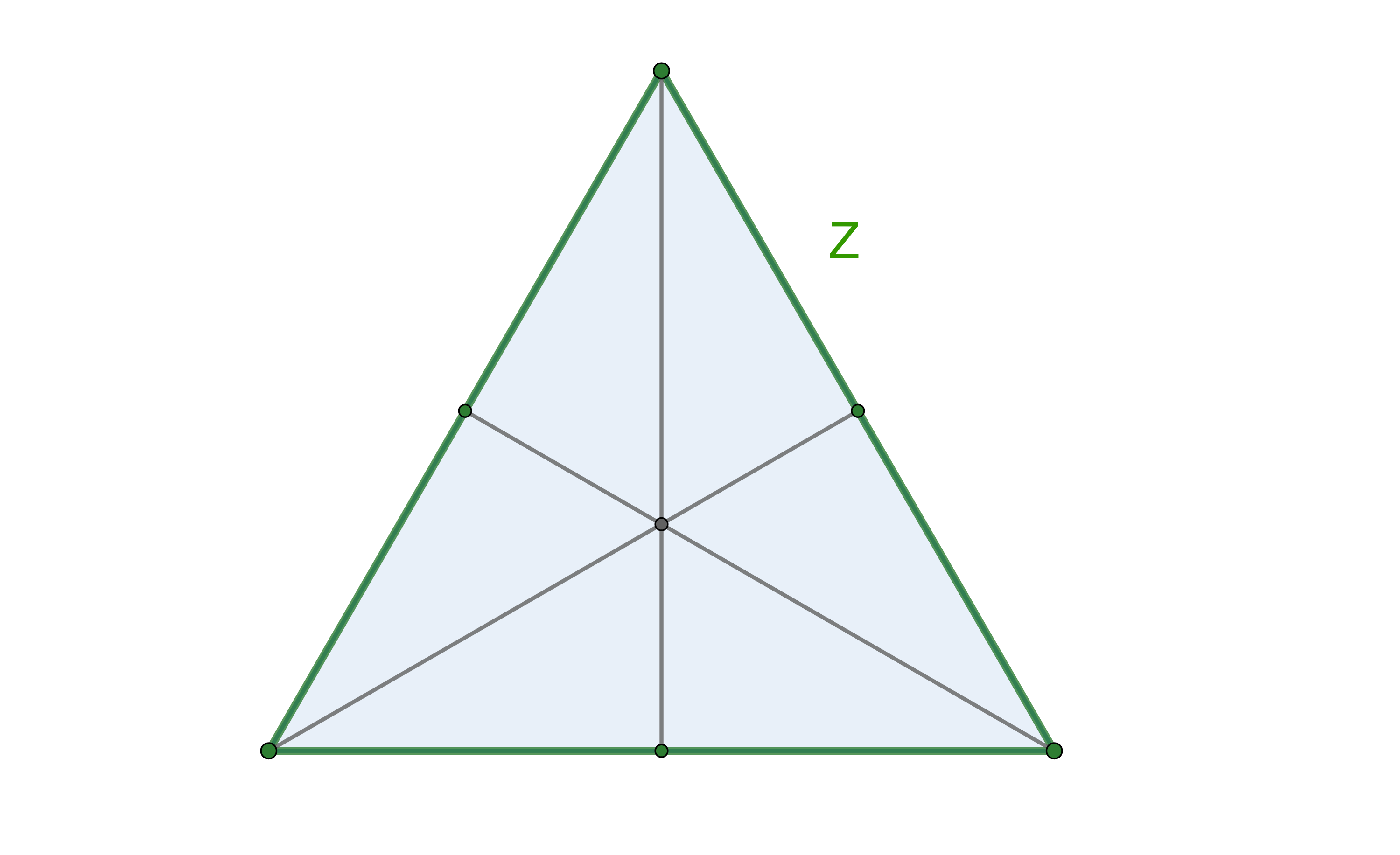}
\end{center}

Before showing how this is done, we define a special construction on simplicial complexes, which will come handy on the proof of our main theorem.

\begin{definition}\label{Def G}
Given $K$ a simplicial complex and a set of vertices $A \subset V(K)$, we define $\mathcal{G}(A) \leq K$ as
\begin{itemize}
    \item $V(\G(A))=A $
    \item $S(\G(A))= \{ \s \in S(K) | \s \subset A \}$
\end{itemize}
\end{definition}
Since $\subset$ is transitive we will have that $\G(A)$ is a simplicial complex. We present some properties of this construction

\begin{proposition}\label{prop G}
Given $K$ a simplicial complex, the construction above defines a function $\G: \mathcal{P}(V(K)) \to \Sub(K)$ that satisfies
\begin{enumerate}
    \item $V(\G(A))=A$
    \item $A =B \Leftrightarrow \G(A) = \G(B)$ (that is, $\G$ is injective)
    \item $A \subset B \Rightarrow \G(A) \subset \G(B)$
    \item $Z \leq \G(V(Z))$ for all $Z \leq K$
    \item $ \underset{i \in I}{\bigcap} \G(A_i) = \G(\underset{i \in I}{\bigcap} A_i)$
    \item $ \underset{i \in I}{\bigcup} \G(A_i) \leq \G(\underset{i \in I}{\bigcup} A_i)$
    \item For $f: K \to L$ a morphism, $\G(f(A)) \leq f(\G(A))$
    \item For $f: K \to L$ a morphism, $\G(f^{-1}(A)) = f^{-1}(\G(A))$
    \item $\G(A \times B)= \G(A) \times \G(B)$
    \item $\G(A-B)=\G(A) -_\D \G(B)$
\end{enumerate}
\end{proposition}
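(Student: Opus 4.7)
The plan is to verify each of the ten clauses separately, working directly from the definition $\mathcal{G}(A)=\{\sigma\in S(K)\mid \sigma\subset A\}$ together with $V(\mathcal{G}(A))=A$. Almost every item reduces to an elementary observation about when a simplex $\sigma\in K$ is contained in a specified subset of $V(K)$. I would group the clauses into three batches: the purely formal ones (1--4), the set-operation compatibilities (5, 6, 9, 10), and the items involving a morphism (7, 8).

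Clauses (1)--(4) are essentially definitional: (1) is the definition, (2) follows since $\mathcal{G}(A)=\mathcal{G}(B)$ forces $A=V(\mathcal{G}(A))=V(\mathcal{G}(B))=B$, (3) is transitivity of $\subset$, and for (4) every $\sigma\in Z$ is already a simplex of $K$ with $\sigma\subset V(Z)$. For (5), $\sigma\in\mathcal{G}(\bigcap_i A_i)$ iff $\sigma\subset A_i$ for all $i$ iff $\sigma\in\mathcal{G}(A_i)$ for all $i$; for (6) the same logic in reverse yields the stated inclusion, and equality genuinely fails, as witnessed by an edge $\tau=\{v_0,v_1\}\in K$ with $A_0=\{v_0\}$ and $A_1=\{v_1\}$, where $\tau\in\mathcal{G}(A_0\cup A_1)$ but $\tau\notin\mathcal{G}(A_0)\cup\mathcal{G}(A_1)$. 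For (9), a simplex $\{(v_i,w_i)\}_{i\le n}$ of $K\times L$ sits in $A\times B$ precisely when every $v_i\in A$ and every $w_i\in B$, which is exactly the membership condition for $\mathcal{G}(A)\times\mathcal{G}(B)$ (the ordering requirements coincide on both sides). For (10), unfolding $-_\D$ gives $\mathcal{G}(A)-_\D\mathcal{G}(B)=\{\sigma\in K\mid \sigma\subset A,\ \sigma\cap V(\mathcal{G}(B))=\emptyset\}=\{\sigma\in K\mid \sigma\subset A-B\}=\mathcal{G}(A-B)$.

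For the morphism clauses, the key input is the pair of observations $\sigma\subset A \Rightarrow f(\sigma)\subset f(A)$ and $\sigma\subset f^{-1}(B) \iff f(\sigma)\subset B$, combined with the description of $f(Z)$ and $f^{-1}(Z')$ from the preceding proposition on images and preimages of subcomplexes. For (8) the biconditional gives both inclusions at once: $\sigma\in\mathcal{G}(f^{-1}(A))$ iff $\sigma\in K$ and $\sigma\subset f^{-1}(A)$ iff $f(\sigma)\in\mathcal{G}(A)$ iff $\sigma\in f^{-1}(\mathcal{G}(A))$. For (7), the implication $\sigma\subset A \Rightarrow f(\sigma)\subset f(A)$ yields the inclusion $f(\mathcal{G}(A))\leq \mathcal{G}(f(A))$ directly; I would double-check which direction of $\leq$ is intended in the proposition, since the reverse inclusion can fail, as seen by taking $K$ two disjoint vertices mapping to the two endpoints of a $1$-simplex in $L$, where the edge lies in $\mathcal{G}(f(A))$ but is not of the form $f(\sigma)$ for any $\sigma\in K$.

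There is no substantial obstacle to the argument; the whole proof is bookkeeping. The only place where genuine care is needed is tracking the direction of inclusion, both for (6) where equality really does fail and for (7) where the provable inclusion may not match the one printed.
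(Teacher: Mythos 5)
Your proposal is correct and follows essentially the same route as the paper's proof: a clause-by-clause unwinding of the definition of $\G$, with items 1--6, 9, 10 reduced to elementary statements about when $\s\subset A$ and items 7--8 handled via $\s\subset A\Rightarrow f(\s)\subset f(A)$ and $\s\subset f^{-1}(A)\iff f(\s)\subset A$. Your caution about item 7 is well founded: the inclusion that is actually provable, and the one the paper's own argument establishes despite the printed direction, is $f(\G(A))\leq\G(f(A))$, and your two-vertex example mapping onto the endpoints of a $1$-simplex correctly shows that the printed inclusion $\G(f(A))\leq f(\G(A))$ can fail.
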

\begin{proof}
The first statement is definition, and the second statement comes from this since $\G(A)=\G(B) \Rightarrow A= V(\G(A))=V(\G(B))=B$. The third statement is true because if $\s \in \G(A)$ then $\s \subset A \subset B$ so $\s \in \G(B)$. The forth statement is obvious as all simplices of $Z$ have vertices in $Z$.

The fifth statement comes from the set theoric property that $\s \in \bigcap A_i \Leftrightarrow \s \in A_i$ $\forall i$, and similarly the sixth statement comes from $\s \in A_i$ for some $i$ then $\s \in \bigcup A_i$. For the seventh statement, if we have $f(\s) \in f(\G(A))$, $\s \in \G(A)$ means that $\s \subset A$ and then $f(\s) \subset f(A)$ so $f(\s) \in \G(f(A))$. As for the eighth statement, $\s \in \G(f^{-1}(A)) \Leftrightarrow \s \subset f^{-1}(A) \Leftrightarrow f(\s) \subset A \Leftrightarrow f(\s) \in \G(A) \Leftrightarrow \s \in f^{-1}(\G(A)) $.

For the ninth statement, consider $\s= \{ (v_0,w_0), \dots , (v_n,w_n) \} \in K \times K'$ (that is, $\fset{v} \in K$, $\fset{w} \in K'$ and $v_i \leq v_{i+1}$, $w_i \leq w_{i+1}$ for all $i$). If $\s \in \G(A) \times \G(B)$ then $\fset{v} \in \G(A)$ and $\fset{w} \in \G(B)$, so $(v_i,w_i) \in A \times B$ for all $i$, so $\s \in \G(A \times B)$. On the other direction, if $\s \in \G(A \times B)$ then $v_i \in A$ and $w_i \in B$ for all $i$ which means that $\fset{v} \in \G(A)$ and $\fset{w} \in \G(B)$. We conclude that $\s \in \G(A) \times \G(B)$.

For the last statement, given $\s \in \G(A) -_\D \G(B)$ we have that $\s \in \G(A)$ so $\s \subset A$, and we also have that $\s \cap V(\G(B))=\s \cap B=\emptyset$. This means that $\s \subset A-B$.

For the other direction, consider $\s \in \G(A-B)$, this is $\s \subset A-B$, which implies that $\s \subset A$ (so $\s \in \G(A)$ and also it means that $\s \cap B= \s \cap V(\G(B))= \emptyset$.
\end{proof}

Of course we will not always have that $Z= \G(Z)$, as for example $\partial \Dn \leq \Dn$. We give a special name for complexes in the image of $\G$
\begin{definition}
A simplicial complex $Z \leq K$ is called \textbf{fat} if $Z=\G(V(Z))$
\end{definition}
A good example of a fat subsimplicial complex is $\ims$, since
$$\ims = \G(\s) $$
The proof of this is simply that $\tau \in \G(\s) \Rightarrow \tau \subset \s \Rightarrow \tau < \s$. Observe that this means that $\mathcal{G}(V(Z)) \cap \ims= \mathcal{G}(\s \cap V(Z)) = \mathcal{G}(\tau)= \Ima(\tau)$ for some $\tau < \s$. We will state this as a corollary.

\begin{corollary} \label{corollary 312}
For any $\s \in K$ and fat subcomplex $Z=\mathcal{G}(V(Z)) \leq K$ we have that
$$Z \cap \ims = \Ima(\tau) $$
For some $\tau< \s$.
\end{corollary}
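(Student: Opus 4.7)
The plan is to reduce the intersection to a single application of property 5 of Proposition~\ref{prop G} and then to read off the answer from the vertex set of the result. The paragraph immediately preceding the corollary establishes the identity $\ims = \mathcal{G}(\s)$, so one can rewrite
$$ Z \cap \ims \;=\; \mathcal{G}(V(Z)) \cap \mathcal{G}(\s). $$
Applying property 5 of Proposition~\ref{prop G} (the commutation of $\mathcal{G}$ with intersections) turns this into $\mathcal{G}(V(Z) \cap \s)$, and the problem is reduced to analysing the set $V(Z) \cap \s \subset \s$.

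Next I would invoke the simplicial complex axiom for $K$: since $\s \in K$, every non-empty subset of $\s$ is itself a simplex of $K$, hence a face of $\s$. So either $V(Z) \cap \s$ is a genuine face $\tau < \s$, in which case $\mathcal{G}(\tau) = \Ima(\tau)$ by the same identity $\ims = \mathcal{G}(\s)$ now applied to $\tau$, or $V(Z) \cap \s = \emptyset$ and $Z \cap \ims$ is empty (the statement is then understood with the obvious convention for the empty face).

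There is essentially no obstacle: the corollary is a direct consequence of two facts that are already in hand, namely the distributivity of $\mathcal{G}$ over intersection and the identity $\ims = \mathcal{G}(\s)$. The only mildly delicate point is the empty intersection case, which one simply flags separately. I would keep the write-up short, as a two-line display followed by the observation that a subset of a simplex is a face.
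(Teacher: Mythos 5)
Your argument is exactly the paper's: rewrite $Z \cap \ims = \G(V(Z)) \cap \G(\s) = \G(V(Z)\cap\s)$ via property 5 of Proposition \ref{prop G} and the identity $\ims=\G(\s)$, then observe that a subset of $\s$ is a face $\tau<\s$ with $\G(\tau)=\Ima(\tau)$. Your explicit handling of the empty-intersection case is a minor refinement the paper leaves implicit, but the approach is the same.
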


We also present a lemma that will use in the proof of the main result. If one is not careful at all, one might be tempted to think that a bijection $f: A \to B$ defines an isomorphism $f: \G(A) \to \G(B)$. This is of course false as $\G(A)$ and $\G(B)$ depend on the complexes in which they are defined. However, we do have the following lemma.

\begin{lemma} \label{prop G2}
Given $K$ and $L$ simplicial complexes and consider $A \subset V(K)$ and $B \subset V(L)$. Consider a bijection $f: A \to B$ with inverse $g: B \to A$

If $f: \G(A) \to \G(B)$ and $g: \G(B) \to \G(A)$ define morphisms, then they are inverse of each other. In particular
$$f: \G(A) \too{\simeq} \G(B) $$
is an isomorphism.
\end{lemma}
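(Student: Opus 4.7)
The proof is essentially a formal consequence of how morphisms of simplicial complexes were defined: a morphism $K \too{h} L$ is literally a function $h : V(K) \to V(L)$ satisfying a condition on simplices, and composition of morphisms is composition of the underlying vertex maps. So once the hypothesis grants that both $f$ and $g$ \emph{are} morphisms, the work is already essentially done.

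The plan is to first invoke Proposition \ref{prop G} (item $1$) to identify $V(\G(A)) = A$ and $V(\G(B)) = B$. A morphism of simplicial complexes $\G(A) \to \G(A)$ is thus determined by its underlying map on the vertex set $A$. By hypothesis, $g \circ f : \G(A) \to \G(A)$ is a morphism (being a composition of two morphisms), and on vertices it equals $g \circ f : A \to A$, which is $\mathrm{id}_A$ because $g$ is the set-theoretic inverse of $f$. Since the identity morphism $1_{\G(A)}$ also has $\mathrm{id}_A$ as its vertex map, and morphisms are determined by their vertex map, we get $g \circ f = 1_{\G(A)}$. The symmetric argument gives $f \circ g = 1_{\G(B)}$.

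Therefore $f : \G(A) \to \G(B)$ admits a two-sided inverse in the category of simplicial complexes and is an isomorphism.

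There is no real obstacle here; the subtle point worth flagging (to motivate why the lemma is stated with the hypothesis that both $f$ and $g$ define morphisms) is that a mere bijection $A \to B$ on vertex sets need not send simplices of $\G(A)$ to simplices of $\G(B)$, since these complexes live inside \emph{different} ambient complexes $K$ and $L$ whose simplex sets are unrelated. The hypothesis is exactly what bridges this gap, and once it holds the identification $g \circ f = 1$ on vertices propagates automatically to the level of morphisms.
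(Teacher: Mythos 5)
Your proposal is correct and matches the paper's own argument in substance: both reduce to the observation that, since $g$ is the set-theoretic inverse of $f$, the composites act as the identity (the paper checks $f(g(\s))=\s$ and $g(f(\tau))=\tau$ directly on simplices, while you phrase it via the vertex maps, which determine the morphisms). Your closing remark correctly identifies why the morphism hypotheses are needed, which is the only non-trivial content of the lemma.
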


\begin{proof}
With this number of hypothesis one would expect that the proof is very easy, and it is. It is just the fact that $f(g(\s))=f(f^{-1}(\s))=\s$ and $g(f(\s))=g(g^{-1}(\tau))=\tau$ (here $f^{-1}$ and $g^{-1}$ refer to the preimage).
\end{proof}

We will now define the barycentric subdivision.

\begin{definition}
Given a simplicial complex $K$ we define its \textbf{subdivision} $Sd(K)$ as the following (abstract) simplicial complex
\begin{itemize}
    \item $V(Sd(K))= S(K)$
    \item $S(Sd(K))= \{ \fset{\s} \in \mathcal{P}(S(K)) | \s_0 \subset \dots \subset \s_n \} $
\end{itemize}
\end{definition}

We call the \textbf{barycenter} of $\s \in K$ to the vertex $\{ \s \}$. Now, notice that given $Z \leq K$, the subdivision $Sd(Z)$ defines a subcomplex of $Sd(K)$, that is, we have a functor $Sd: \Sub(K) \to \Sub(Sd(K))$. We will state some properties of this functor.

\begin{proposition} \label{prop Sd}
Given $K \in \textbf{Scx}$, the functor $Sd: \Sub(K) \to \Sub(Sd(K))$ satisfies
\begin{enumerate}
    \item $Z=Y \Leftrightarrow Sd(Z)=Sd(Y)$ (that is, $Sd$ is injective)
    \item $Z \leq Y \Rightarrow Sd(Z) \leq Sd(Y)$
    \item $Sd(\bigcup Y_i)= \bigcup(Sd(Y_i))$
    \item $Sd(\bigcap Y_i) = \bigcap Sd(Y_i)$
    \item $\fset{\s} \in Sd(Z) \Leftrightarrow \s_n \in Z$ (where we suppose that $\s_0 \subset \dots \subset \s_n$)
    \item $Sd(Z)= \G(V(Sd(Z))= \G(Z)$ (it is fat)
\end{enumerate}
\end{proposition}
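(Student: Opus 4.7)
The plan is to establish statement (5) first and derive the remaining items from it, finishing with the fatness assertion (6). Statement (5) follows immediately from the definition of $Sd$: by definition a simplex of $Sd(Z)$ is a chain $\s_0 \subset \dots \subset \s_n$ of simplices of $Z$, and since $Z$ is a subcomplex of $K$ the conditions $\s_i \in Z$ for $i < n$ are automatic as soon as $\s_n \in Z$, because $\s_i \subset \s_n$ means $\s_i < \s_n$.

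Given (5), statement (2) is immediate: if $Z \leq Y$ and $\fset{\s} \in Sd(Z)$ then $\s_n \in Z \leq Y$, so $\fset{\s} \in Sd(Y)$ (this also checks that $Sd$ really is functorial on $\Sub$). Statements (3) and (4) reduce vertex-by-vertex to the set-theoretic equivalences $\s_n \in \bigcup Y_i \Leftrightarrow \s_n \in Y_i \text{ for some } i$ and $\s_n \in \bigcap Y_i \Leftrightarrow \s_n \in Y_i \text{ for all } i$, each applied to the top vertex of a chain and then translated back through (5). For (1), the forward direction is trivial; for the converse, observe that $\s \in Z$ iff $\{\s\} \in Sd(Z)$, so $Sd(Z) = Sd(Y)$ forces $Z = Y$ at the level of simplices.

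For (6), note first that $V(Sd(Z)) = S(Z)$ as a subset of $V(Sd(K)) = S(K)$, so under the customary identification $Z \leftrightarrow S(Z)$ we have $\G(V(Sd(Z))) = \G(Z)$, which is the second equality. The first equality $Sd(Z) = \G(V(Sd(Z)))$ is the fatness claim itself: a chain $\fset{\s}$ belongs to $\G(V(Sd(Z)))$ iff each $\s_i \in Z$, which by the subcomplex property of $Z$ collapses to the single condition $\s_n \in Z$, and (5) identifies this with membership in $Sd(Z)$. I do not anticipate any real obstacle; the only point demanding a bit of care is keeping the two layers of notation straight, since vertices of $Sd(K)$ are themselves simplices of $K$, and $\G$ is an operation on subsets of vertices of $Sd(K)$.
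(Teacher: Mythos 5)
Your proposal is correct and follows essentially the same route as the paper: statement (5) is proved directly from the definition and the subcomplex property, statements (2)--(4) are reduced to it, and (1) and (6) use the identification $V(Sd(Z))=S(Z)$ (the paper writes this as $Z=V(Sd(Z))$ and, for (6), checks the inclusion $\G(Z)\leq Sd(Z)$, the reverse one being Proposition \ref{prop G}.4, exactly as your iff-argument does).
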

\begin{proof}
The first statement comes from $Z= V(Sd(Z))=V(Sd(Y))=Y$. The fifth statement comes from the fact that for $Z \leq K$ we have $\tau \subset \s \in Z \Rightarrow \tau \in Z$ together with $\s_0 \subset \dots \subset \s_n$. The second statement is clear using the fifth.

The third statement comes as follows $\fset{\s}\in \bigcup Sd(Y_i) \Leftrightarrow \exists i_0$ such that $\fset{\s} \in Sd(Y_{i_0}) \Leftrightarrow \exists i_0$ such that $\s_n \in Y_{i_0} \Leftrightarrow \s_n \in \bigcup Y_i \Leftrightarrow \fset{\s} \in Sd(\bigcup Y_i)$

Similarly $\fset{\s}\in \bigcap Sd(Y_i) \Leftrightarrow \forall i$ $\fset{\s} \in Sd(Y_i) \Leftrightarrow \forall i$ $\s_n \in Y_{i} \Leftrightarrow \s_n \in \bigcap Y_i \Leftrightarrow \fset{\s} \in Sd(\bigcap Y_i)$

For the last statement, consider $\fset{\s} \in \G(V(Sd(Z)))=\G(Z)$, we have that $ \fset{\s} \subset V(\G(Z))=Z $ so $\fset{\s} \in Sd(Z)$
\end{proof}

We will usually abuse notation and call $Z$ to $Sd(Z)$ when there is no risk of confusion. We will also suppose that when writing $\fset{\s} \in Sd(K)$ we have that $\s_0 \subset \dots \subset \s_n$.

The key point of why the subdivision works well with $-_\D$ is the following lemma.




\begin{lemma}
For $X$ simplicial complex and $Y,Z \leq X$, given $\fset{\s} \in Sd(X)$ we have
$$ \fset{\s}\in Sd(Y)-_\D Sd(Z) \Longleftrightarrow \fset{\s} \subset Y-Z $$
\end{lemma}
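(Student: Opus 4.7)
The plan is to avoid any direct combinatorial verification and instead to reduce the statement to two facts that are already available: that $Sd(W)$ is always fat (item 6 of Proposition \ref{prop Sd}), and that $\mathcal{G}$ commutes with set subtraction (item 10 of Proposition \ref{prop G}).

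The first step is to reinterpret both sides in terms of $\mathcal{G}$. Since $V(Sd(X)) = S(X)$, any subcomplex $W \leq X$ sits inside $V(Sd(X))$ as a subset, and Proposition \ref{prop Sd} tells us that $Sd(W) = \mathcal{G}(W)$ as a subcomplex of $Sd(X)$. Applying this to both $Y$ and $Z$, I rewrite the left-hand condition as
$$\{\sigma_0, \dots, \sigma_n\} \in \mathcal{G}(Y) -_\Delta \mathcal{G}(Z).$$

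The second step is the decisive one: use item 10 of Proposition \ref{prop G}, which says $\mathcal{G}(A) -_\Delta \mathcal{G}(B) = \mathcal{G}(A - B)$, to collapse the above into
$$\{\sigma_0, \dots, \sigma_n\} \in \mathcal{G}(Y - Z).$$

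Finally, by the very definition of $\mathcal{G}$, a simplex $\{\sigma_0, \dots, \sigma_n\}$ (viewed as a subset of $V(Sd(X)) = S(X)$) lies in $\mathcal{G}(Y-Z)$ if and only if $\{\sigma_0, \dots, \sigma_n\} \subset Y - Z$. Chaining the three equivalences together gives exactly the claim. There is essentially no obstacle here — the only subtlety is conceptual, namely recognising that the vertex set of $Sd(X)$ is $S(X)$ so that $Y \leq X$ doubles as a subset of $V(Sd(X))$, which is what allows Proposition \ref{prop G} to be applied to $Y$ and $Z$ in the first place.
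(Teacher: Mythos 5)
Your proof is correct. The only point that needed care is the one you flag yourself: since $V(Sd(X))=S(X)$, a subcomplex $Y\leq X$ can be read as a set of vertices of $Sd(X)$, so Proposition \ref{prop Sd}(6) identifies $Sd(Y)$ with $\G(Y)$ inside $Sd(X)$, and Proposition \ref{prop G}(10) then gives $Sd(Y)-_\D Sd(Z)=\G(Y)-_\D\G(Z)=\G(Y-Z)$, whose simplices are by definition exactly the chains $\fset{\s}$ contained in $Y-Z$. This is a genuinely different packaging from the paper's own argument, which proves the lemma by a direct one-line unpacking of the definitions: $\fset{\s}\in Sd(Y)-_\D Sd(Z)$ iff $\fset{\s}\subset Y$ and $\fset{\s}\cap Z=\emptyset$ iff $\s_i\in Y-Z$ for all $i$ (essentially Proposition \ref{prop Sd}(5) plus the fact that $V(Sd(Z))=Z$). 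The underlying computation is the same — indeed the proof of Proposition \ref{prop G}(10) is word for word that verification — so what your route buys is modularity: the lemma becomes a formal corollary of fatness of $Sd$ together with $\G$ commuting with $-_\D$, which is precisely the mechanism the paper itself uses immediately afterwards to prove the corrected Morgan law and the double-complement property. The paper's direct proof, in exchange, keeps the chain condition $\s_0\subset\dots\subset\s_n$ visible, which is what gets reused in the more delicate third statement of the following proposition, where the $\G$-formalism alone does not suffice.
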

\begin{proof}
$\fset{\s}\in Sd(Y)-_\D Sd(Z) \Leftrightarrow \fset{\s} \subset Y \text{ and } \fset{\s} \cap Z = \emptyset \Leftrightarrow \s_i \in Y-Z \text{ } \forall i $
\end{proof}
And now the issues we had with $-_\D$ can be solved.

\begin{proposition}
Given $X$ a simplicial complex, we have
\begin{enumerate}
    \item $Sd(X) -_\D (Sd(X) -_\D Sd(Z)) \leq Sd(Z)$
    \item $Sd(Y) -_\D Sd(Z)= \emptyset \Longrightarrow Sd(Y) \leq Sd(Z)$
    \item $Sd(X) -_\D (Sd(Y) \cap Sd(Z)) \leq (Sd(X)-_\D Sd(Y)) \cup (Sd(X) -_\D Sd(Z)) $
\end{enumerate}
\end{proposition}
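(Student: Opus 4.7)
The plan is to leverage the preceding lemma, which translates $\fset{\s} \in Sd(Y) -_\D Sd(Z)$ into the elementary vertex-level condition $\fset{\s} \subset Y - Z$. Combined with the chain condition $\s_0 \subset \cdots \subset \s_n$ for simplices in subdivisions and the fact that $Y$ and $Z$ are subcomplexes of $X$ (hence downward-closed under $\subset$), all three parts should reduce to routine set-theoretic bookkeeping. The whole proposition will follow the same pattern as the lemma it builds on.

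For part (1), given $\fset{\s} \in Sd(X) -_\D (Sd(X) -_\D Sd(Z))$, I would test each vertex $\s_i$ individually: by definition of $-_\D$, the $0$-simplex $\{\s_i\}$ of $Sd(X)$ cannot lie in $Sd(X) -_\D Sd(Z)$, which by the lemma means precisely $\s_i \in Z$. Hence $\fset{\s} \subset Z$, and using $Sd(Z) = \mathcal{G}(Z)$ from item 6 of proposition \ref{prop Sd}, one concludes $\fset{\s} \in Sd(Z)$. Part (2) runs along identical lines: if $Sd(Y) -_\D Sd(Z) = \emptyset$, then for every $\s \in Y = V(Sd(Y))$ the vertex $\{\s\}$ cannot belong to the empty set, and the lemma then forces $\s \in Z$; thus $Y \leq Z$ as subcomplexes of $X$, which gives $Sd(Y) \leq Sd(Z)$ by item 2 of proposition \ref{prop Sd}. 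One mild care point in part (1) is that $Sd(X) -_\D Sd(Z)$ need not be of the form $Sd(W)$, so the lemma must be applied to the \emph{inner} subtraction at the vertex level and not naively to the whole expression.

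Part (3) is the only statement that genuinely uses the chain structure, and it is where I expect the main (but mild) obstacle. I would first rewrite $Sd(Y) \cap Sd(Z) = Sd(Y \cap Z)$ via item 4 of proposition \ref{prop Sd}, so the lemma applies and yields: each $\s_i \in X$ and, for every $i$, either $\s_i \notin Y$ or $\s_i \notin Z$. The goal is to upgrade this pointwise dichotomy to a uniform one. Arguing by contradiction, suppose some $\s_i \in Y$ and some $\s_j \in Z$; setting $m = \min(i, j)$ and using $\s_m \subset \s_i$, $\s_m \subset \s_j$ together with the subcomplex property of $Y$ and $Z$, one obtains $\s_m \in Y \cap Z$, contradicting the hypothesis at index $m$. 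Therefore either $\s_i \notin Y$ for all $i$ or $\s_i \notin Z$ for all $i$, and the lemma immediately delivers the desired containment in $(Sd(X) -_\D Sd(Y)) \cup (Sd(X) -_\D Sd(Z))$.
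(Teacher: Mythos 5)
Your proposal is correct. For part (3) it is essentially the paper's own argument: the paper also reduces to $Sd(X) -_\D Sd(Y\cap Z)$, picks an index $i_0$ with $\s_{i_0}\in Y$, and uses the chain condition plus downward closure of $Y$ and $Z$ to force $\s_k\notin Z$ for all $k$; your $\min(i,j)$-contradiction is just a repackaging of the same idea. For parts (1) and (2) your route differs mildly from the paper's: the paper runs everything through the $\G$-calculus, namely $Sd(W)=\G(W)$ together with $\G(A-B)=\G(A)-_\D\G(B)$ and injectivity of $\G$, which in fact yields the equality $Sd(X)-_\D(Sd(X)-_\D Sd(Z))=Sd(Z)$ and reduces (2) to $\G(Y-Z)=\G(\emptyset)$; you instead apply the preceding lemma vertexwise (to the $0$-simplices $\{\s_i\}$), using implicitly that $Sd(X)-_\D Sd(Z)$ is a subcomplex so that $\s_i\in V(Sd(X)-_\D Sd(Z))$ iff $\{\s_i\}$ lies in it, and then invoke fatness $Sd(Z)=\G(Z)$ only at the last step. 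Both arguments are elementary and rest on the same fact (subdivisions are fat); the paper's is slightly slicker and gives equality in (1), while yours is more self-contained, needing only the lemma and item 6 of the subdivision proposition. Your flagged care point in (1) — applying the lemma only to the inner subtraction — is exactly the right precaution.
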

\begin{proof}
For the first two properties we use that $Sd(Z)=\G(Z)$ for all $Z \leq X$.

So $Sd(X) -_\D (Sd(X) -_\D Sd(Z)) = \G(X) -_\D (\G(X) -_\D \G(Z))= \G(X-(X-Z))=\G(Z)=Sd(Z)$

And $Sd(Y) -_\D Sd(Z)= \G(Y) -_\D \G(Z)= \G(Y-Z) = \emptyset = \G(\emptyset)$, then $Y-Z = \emptyset$ and therefore $Y \subset Z$ which implies $Sd(Y) \leq Sd(Z)$

The Morgan law is slightly more subtle. Consider $\fset{\s} \in Sd(X) -_\D (Sd(Y) \cap Sd(Z)) = Sd(X) -_\D (Sd(Y \cap Z))$, which says that $\forall i$ $\s_i \notin Z \cap Y$.

Now suppose $\fset{\s} \notin Sd(X)-_\D Sd(Y)$, then for some $i_0$ we have that $\s_{i_0}\in Y$, then as $Y \leq X$ $\s_0 , \dots, \s_{i_0} \in Y$ and therefore $\s_0, \dots, \s_{i_0} \notin Z$. This means, since $Z \leq X$, that $s_k \notin Z$ for $k > i_0$, with which we conclude that $\fset{\s}\in Sd(X)-_\D Sd(Z)$
\end{proof}

We finish this section with an abstract definition of the star and the link.

\begin{definition}
Given $X$ a simplicial complex and $\s \in X$
\begin{itemize}
    \item $st(\s)= \underset{\s < \tau}{\bigcup} \Ima(\tau)$
    \item $lk(\s)= st(\s)-_\D \ims$
\end{itemize}
\end{definition}

On the proof of the main result we also make use of the geometrical point of view on simplicial complexes. We use the results and language that can be found in the first two chapters of \cite{RourkeSanderson}.

\section{Simplicial Deligne Axioms} \label{chapter Simplicial Deligne Axioms}

We now move into the main results of this paper, which corresponds to giving a simplicial incarnation of the Deligne axioms, and then providing chains of sheaves that satisfy them. This actually provides new ways of constructing intersection homology, from a combinatorial viewpoint. In our way, we actually abstract the Deligne axioms themselves, giving them a lissom presentation, and conditions to be stated in many contexts.

We work in a context of \textit{simplicial sheaves}, which in our case is actually a category of sheaves over a topology made out of subcomplexes. Working under a suitable topology, we state the simplicial Deligne axioms as a natural mirror of the topological ones, and we use the functor
$$ \Phi: \Sh(|X|) \to \Sh_\D^\mathcal{T}(X) $$
Constructed naturally from the realization functor $|-|$ by taking $\Phi(F)(Y)= \Gamma(|Y|,F)=\texttt{colim}_{|Y| \subseteq U}F(U)$, to find $\D$-Deligne sheaves. This functor is special as it takes soft sheaves into flasque sheaves. In particular $\Phi(\IpC)$ is flasque, and this implies that
$$ \mathbb{H}^\star(X,\Phi(\IpC)) \simeq IH^{\bar{p}}_\star(|X|) $$
We prove that $\Phi(\IpC)$ satisfies the simplicial Deligne axioms, and this brings as a consequence our two main results.
\begin{itemize}
    \item If $F$ is a Deligne sheaf then $\Phi(F)$ satisfies de simplicial Deligne axioms.
    \item If $F$ satisfies the simplicial Deligne axioms, then $\mathbb{H}^\star(X,F) \simeq IH^{\bar{p}}_\star(|X|)$.
\end{itemize}

\subsection{Simplicial Sheaves} \label{section simplicial sheaves}

Although it might not be strictly necessary to say, we want to remark that we are working with \textit{simplicial sheaves}. This means that the domain category of the sheaves is made out of simplices, following a tradition of defining invariants and other mathematical objects on simplicial categories. The most known example of this is the simplification of the calculation of homology by defining for simplices, that is, if $X$ is a triangulation of $|X|$, we take have that
$$ H_\star^\D(X) \simeq H_\star(|X|) $$
Here $H_\star^\D: \textbf{Scx} \to \textbf{Ab}$ is a functor defined on a category of simplicial structures, which is much easier to calculate than $H_\star$. This simplicial homology is the homology of a chain complex made of simplicial chains.
$$ C_n^\D(K)= \mathbb{Z} K_n$$
Observe that for each $n$, the duals of these $C_n^\D$'s define contravariant functors
$$C^n_\D: \textbf{Scx}\op \to \textbf{Ab}$$
Now, if we fix a simplicial complex $X$, we have that the category $\Sub(X)\op$ (corresponding to the poset of subcomplexes of $X$ under inclusion) is a subcategory of $\textbf{Scx}\op$, hence the $C_n^\D$'s can be restricted to it
$$ C^n_\D : \Sub(X)\op \to \textbf{Ab} $$
Corollary \ref{Sub(X) topological space} then means that $C^n_\D$ is actually a presheaf! It is moreover easily seen to be a sheaf. This inspires a theory of \textit{simplicial sheaves}, constructed simply by sheafifying functors $\Sub(X)\op \to \textbf{A}$ (with \textbf{A} being some abelian category). Examples of these sheaves are abundant
\begin{itemize}
    \item The sheafification of the constant functor $\underline{M}:U \mapsto M $ is a simplicial sheaf.
    \item The Sullivan polynomials $A_{PL}$, as constructed in \cite{Sullivan} can be regarded as simplicial sheaves (ordering the vertices).
    \item The Blown up Sheaf constructed in \cite{BlownupIH} can be regarded as a simplicial sheaf (ordering the vertices).
\end{itemize}
And as sheaves, they inherit sheaf properties. For example, we can talk about flasque sheaves, which in the literature are been called \textit{extendable}. 
A reason for which considering simplicial sheaves makes sense is that, just as with topological sheaves, the hypercohomology of the constant sheaf is simplicial cohomology
\begin{itemize}
    \item Consider $\underline{\mathbb{Q}}$ be the constant sheaf as in the example before and $X$ a simplicial complex with its vertices ordered. We want to calculate the hypercohomology $\mathbb{H}^*(X,\underline{\mathbb{Q}})$. We can easily check that $\underline{\mathbb{Q}}$ is not flasque, since for example, as $\partial \Delta^1$ is the discrete category with two points, we have that $$\Gamma(\underline{\mathbb{Q}})(\partial \Delta^1)= \mathbb{Q} \oplus \mathbb{Q}$$
    And there are no surjections $\mathbb{Q} \to \mathbb{Q} \oplus \mathbb{Q}$.
    
    With the result shown in (\cite{FelixHalperinThomasRHT} Lemma 10.7), we directly deduce that $\underline{\mathbb{Q}} \to A_{PL}$ gives a flasque resolution of $\underline{\mathbb{Q}}$. Hence $\mathbb{H}^i(X,\underline{\mathbb{Q}})=H^i(A_{PL}(X))=H^i(X) $. That is to say, the hyperhomology of the constant functor $\underline{\mathbb{Q}}$ is the classical cohomology.
\end{itemize}
In a way, we are extending this from cohomology to intersection cohomology, by the use of Deligne axioms.

We will consider as a reference, the Boorel-Moore intersection (co)homology, as constructed in \cite{Banagl}. The functor $\IpC$ constructed there can be considered as a simplicial sheaf. We will work with this sheaf as our main reference, and we assume all the constructions and results from (\cite{Banagl}, chapters 1-4).

If we fix a triangulation, we can also see $\IpC$ as one of these simplicial sheaves, and the construction made in the last section can be abstracted as follows. For a PL space $X$ with a family of triangulations $\mathbb{T}$, a PL sheaf system is a family of sheaves $\{ F_T \}_{T \in \mathbb{T}}$ with corresponding morphisms of sheaves compatible with subdivisions. We can construct a sheaf on $X$ from there in a way similar to $\IpC$, and work out Deligne axioms with this theory.
Luckily, we do not have to do that. We can obtain our results on one fixed triangulation of $X$. For reasons that will be explained soon, we subdivide twice (spoiler: the subtraction works better like that) and then we take a particular topology made out of subcomplexes. We name the category of sheaves over a topology $\T \subset \Sub(X)$ as $\Sh_\D^\T(X)$ (for topologies smaller than $\Sub(X)$, the examples given before are sheaves by considering the restriction).

\subsection{Abstract presentation for Deligne axioms} \label{abstract deligne axioms}
We give an abstract presentation on Deligne axioms that can serve us into stating this axioms in many contexts, which gives us a particularly lissom presentation.

Consider a chain of categories with morphisms
\begin{equation*}
    \begin{tikzcd}
    \C_0 \arrow[shift left=1]{r}[above]{\iota_0} & \C_1 \arrow[shift left=1]{l}[below]{j_0} \arrow[shift left=1]{r}[above]{\iota_1} & \dots \arrow[shift left=1]{l}[below]{j_1} \arrow[shift left=1]{r}[above]{\iota_{n-1}} & \C_n \arrow[shift left=1]{l}[below]{j_{n-1}} \arrow[shift left=1]{r}[above]{\iota_n} & \C_{n+1}=\C \arrow[shift left=1]{l}[below]{j_{n}}
    \end{tikzcd}
\end{equation*}
Such that $j_k \circ \iota_k \simeq 1_{\C_k}$ for all $k \in \{0, \dots, n \}$

We will name $j^k=j_k \circ \dots \circ j_n : \C \to \C_k$ for $k \in \{0, \dots , n \}$ and $j^{n+1}=1_\C : \C \to \C$. This will act as our "projection to subspaces".

We need two more elements, first a sequence $p: \mathbb{N} \to \mathbb{Z}$ and secondly a "truncation functor", which in this case is a set of endofunctors
$$\tau_{p(k)}^i: \C_i \to \C_i \text{ } k \in \{0, \dots , n \},i \in \{ k,k+1 \}$$
that satisfy the following
\begin{enumerate}\renewcommand{\theenumi}{\roman{enumi}}
    \item $\tau_{p(k+1)}^{k} \circ \tau_{p(k)}^{k} \simeq \tau_{p(k)}^{k}$ for all $k$
    \item $\tau_{p(k)}^k \circ \tau_{p(k)}^k \simeq \tau_{p(k)}^k$ for all $k$
\end{enumerate}
We will omit the superindex whenever the categories involved are clear. We will also ask a compatibility with the $j_k$ functors

\begin{itemize}
    \item $j_k \circ \tau_{p(k)}^k \simeq \tau_{p(k)}^{k-1} \circ j_k$
\end{itemize}

And with these hypothesis we can state the Deligne axioms

Now take $F_0 \in \C_0$ such that $\tau_{p(0)}F_0 \simeq F_0$. Having this setting, we state the Deligne axioms.

\begin{definition}
We say that $A \in \C$ is of class $Del_{F_0}$ if it satisfies
\begin{enumerate}
    \item[(AX 1)] $j^0A \simeq F_0$ 
    \item[(AX 2)] $j^{k+1}A \simeq \tau_{p(k)} \iota_k j^k A$ $\forall k \in \{0, \dots, n \}$
\end{enumerate}
\end{definition}
Observe that in the second axiom $k=n$ means $A \simeq \tau_{p(n)} \iota_n j^n A$. We will abuse notation and say that $A \in Del_{F_0}$ (or simply $A \in Del$ if $F_0$ is clear from the context) whenever $A$ is of class $Del_{F_0}$

Now let us call 
$$ \textit{P}_{F_0}= \tau_{p(n)} \iota_n \dots \tau_{p(0)} \iota_0 F_0$$ 
We have the following theorem.

\begin{theorem} \label{teo deligne}
$A \in Del_{F_0} \Leftrightarrow A \simeq \textsl{P}_{F_0}$
\end{theorem}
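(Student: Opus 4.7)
Both directions can be reduced to the single identity $j^{k+1}A \simeq Q_k$, where $Q_k := \tau_{p(k)}\iota_k \cdots \tau_{p(0)}\iota_0 F_0 \in \C_{k+1}$ for $k \in \{0,\ldots,n\}$, so that $Q_0 = \tau_{p(0)}\iota_0 F_0$, $Q_k = \tau_{p(k)}\iota_k Q_{k-1}$ for $k \geq 1$, and $Q_n = \textsl{P}_{F_0}$. The plan is to establish this identity in each direction, after which (AX 1), (AX 2), and $A \simeq \textsl{P}_{F_0}$ follow by inspection.

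For the direction $(\Rightarrow)$, suppose $A \in Del_{F_0}$. I would proceed by upward induction on $k$. The base case $k=0$ combines (AX 2) at $k=0$ with (AX 1): $j^1 A \simeq \tau_{p(0)}\iota_0 j^0 A \simeq \tau_{p(0)}\iota_0 F_0 = Q_0$. For the step, (AX 2) together with $j^k A \simeq Q_{k-1}$ yields $j^{k+1}A \simeq \tau_{p(k)}\iota_k j^k A \simeq \tau_{p(k)}\iota_k Q_{k-1} = Q_k$. Taking $k = n$ and using $j^{n+1} = 1_\C$ delivers $A \simeq Q_n = \textsl{P}_{F_0}$.

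For the direction $(\Leftarrow)$, suppose $A \simeq Q_n$, and prove $j^{k+1} Q_n \simeq Q_k$ by downward induction on $k$. The case $k = n$ is trivial. For the step, using $j^{k+1} = j_{k+1}\circ j^{k+2}$ and the inductive hypothesis $j^{k+2}Q_n \simeq Q_{k+1}$, I would compute $j^{k+1}Q_n \simeq j_{k+1}\tau_{p(k+1)}\iota_{k+1}Q_k \simeq \tau_{p(k+1)} j_{k+1}\iota_{k+1} Q_k \simeq \tau_{p(k+1)} Q_k$ by successive use of axioms (iv) and (i). Since $Q_k = \tau_{p(k)}\iota_k Q_{k-1}$ (or $Q_0 = \tau_{p(0)}\iota_0 F_0$), axiom (ii) collapses $\tau_{p(k+1)}\tau_{p(k)}$ to $\tau_{p(k)}$, giving $\tau_{p(k+1)}Q_k \simeq Q_k$, as needed. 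Axiom (AX 1) is then obtained by a last application of $j_0$: $j^0 A \simeq j_0 Q_0 \simeq \tau_{p(0)} j_0 \iota_0 F_0 \simeq \tau_{p(0)} F_0 \simeq F_0$, using the hypothesis $\tau_{p(0)}F_0 \simeq F_0$. With $j^{k+1}A \simeq Q_k$ and $j^0 A \simeq F_0$ established, axiom (AX 2) is immediate at every $k$.

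The key obstacle is the collapse $\tau_{p(k+1)}Q_k \simeq Q_k$, which hinges on applying axiom (ii) at the ambient category $\C_{k+1}$ rather than $\C_k$; this reflects the monotonicity of the perversity ($p(k+1)\geq p(k)$) and the derived-category behaviour of truncations that the abstract setup is designed to axiomatise. Beyond that, the argument is a formal manipulation of (i), (ii), (iv), and the definition of $j^k$.
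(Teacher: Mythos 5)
Your proof is correct and follows essentially the same route as the paper: the forward direction unfolds (AX 2) and (AX 1) exactly as in the paper, and your downward induction establishing $j^{k+1}Q_n\simeq Q_k$ is just a repackaging of the paper's key claim $j_k \textsl{P}_k\simeq \textsl{P}_{k-1}$, proved with the same uses of axioms (i), (ii) and (iv), including the truncation collapse $\tau_{p(k+1)}\tau_{p(k)}\simeq\tau_{p(k)}$ applied in $\C_{k+1}$. No gaps; nothing further to add.
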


So the axioms characterize in fact an isomorphism class in $\C$.

\begin{proof}
First suppose that $A$ satisfies the axioms, then
$$ A \simeq \tau_{p(n)}\iota_n j^n A \simeq \tau_{p(n)}\iota_n \tau_{p(n-1)}\iota_{n-1} j^{n-1} A \simeq \dots \simeq \tau_{p(n)}\iota_n \dots \tau_{p(0)}\iota_0 j^0 A $$
and $j^0 A \simeq F_0$, so we conclude that $A \simeq \textit{P}_{F_0}$

For the other direction we define
$$\textit{P}_{k}= \tau_{p(k)}\iota_{k} \dots \tau_{p(0)} \iota_0 F_0 $$
And we call $\textit{P}_{-1}=F_0$. Observe that
\begin{itemize}
    \item $\textit{P}_{k+1}= \tau_{p(k+1)}\iota_{k+1} \textit{P}_k$ and $\textit{P}_n = \textit{P}_{F_0}$
    \item $j_k \textit{P}_k \simeq \textit{P}_{k-1} $ for $k \in \{0, \dots, n \}$
    \item $j^k \textit{P}_{F_0} \simeq \textit{P}_{k-1}$ for $k \in \{0, \dots, n \}$
\end{itemize}
The first statement is direct from the definition of $\textit{P}_k$ and the third statement follows directly from the second. 

For the second statement we separate cases
\begin{itemize}
    \item For $k \geq 1$ we have $j_k \textit{P}_k = j_k \tau_{p(k)}\iota_{k} \textit{P}_{k-1} \simeq  \tau_{p(k)} j_k \iota_{k} \textit{P}_{k-1} \simeq \tau_{p(k)} \textit{P}_{k-1} \simeq \tau_{p(k)}\tau_{p(k-1)}\iota_{k-1} \textit{P}_{k-2} \simeq \tau_{p(k-1)}\iota_{k-1} \textit{P}_{k-2}=\textit{P}_{k-1}$
    \item For $k=0$ we have $j_0 \textit{P}_0= j_0 \tau_{p(0)} \iota_0 F_0 \simeq  \tau_{p(0)} j_0 \iota_0 F_0 \simeq \tau_{p(0)} F_0 \simeq F_0= \textit{P}_{-1}$
\end{itemize}
For $k=0$ the statement $j^k \textit{P}_{F_0} \simeq \textit{P}_{k-1}$ is the first axiom, and the second axioms is satisfied as $j^{k+1}\textit{P}_{F_0} \simeq \textit{P}_k= \tau_{p(k)}\iota_{k} \textit{P}_{k-1} \simeq \tau_{p(k)}\iota_{k} j^k \textit{P}_{F_0} $
\end{proof}
As the reader can imagine, there are many situations in which the conditions for stating the Deligne axioms are satisfied. The tricky element here is to find an object in $\C$ that would satisfy them.

To get the classical, topological, Deligne axioms we take for a chain of inclusions $ U_0 \hookar{\iota_0} U_1 \hookar{\iota_1} \dots \hookar{\iota_n} U_n=X  $ the derived categories $\C_k=D(U_k)$. We take $\tau_{p(k)}$ as truncations and for the functors $ (\iota_k, j_k)$  we take the derived functors of $(\iota_k)^\star$ and $(\iota_k)_\star$. 

Notice that with respect to what the statement of Deligne refers, we would not need to be in the derived categories, however we would loose track of the homological structure, which would make the axioms very insipid. We need to be wise in selecting the categories to consider.

Given the theory we have constructed, we can state the Deligne axioms and have theorem \ref{teo deligne} for any chain of simplicial complexes $ X_0 \hookar{\iota_0} X_1 \hookar{\iota_1} \dots \hookar{\iota_{n}} X_{n+1}=X $. Just like for the topological case, we take the categories $D_\D(X_k)$, the truncation as the $\tau_{p(k)}$'s, and as the morphisms we take the derivative of $(\iota_k)^\star$ and $(\iota_k)_\star$. 

In this setting, we have freedom of movement on the considered chain $X_0 \leq X_1 \leq \dots \leq X_{n+1}$, as well as in the topology on $\Sub(X)$. Furthermore, we could (if we wanted to) change the category from simplicial complexes into simplicial sets, delta sets, or even other presheaf categories. We will stay for now on simplicial complexes.


\subsection{Our setting: A PL stratified pseudomanifold} \label{section 6.2}

We now move into the result we obtained regarding the simplicial Deligne axioms. Perhaps the more puzzling of the freedoms just mentioned in last section is to get is the chain $X_0 \leq \dots \leq X$, since for topological spaces this chain is taken to be the \textit{open sets} $U_k=X-X_{n-k}$, which are obtained by a complement of sets, and as mentioned in chapter \ref{section simplicial complexes}, the complement of simplicial structures is not simplicial and our best replacement for complement, which we have denoted as $-_\D$, has a bad behaviour (when not subdivided). In particular, it does not help when working with a stratified space: If we consider for example the cone of $\partial \D^2$ with the cone point being $X_0$. If we consider $X-_\D X_0$ we get $\partial \D^2$, which is not homeomorphic to $|X|-|X_0|$.

Here is where subdivision comes into rescue. If we subdivide once we recover the three properties mentioned in section \ref{section simplicial complexes}. However we do not recover the topological structure as the example in the following picture shows

\begin{center}
\includegraphics[scale=0.2]{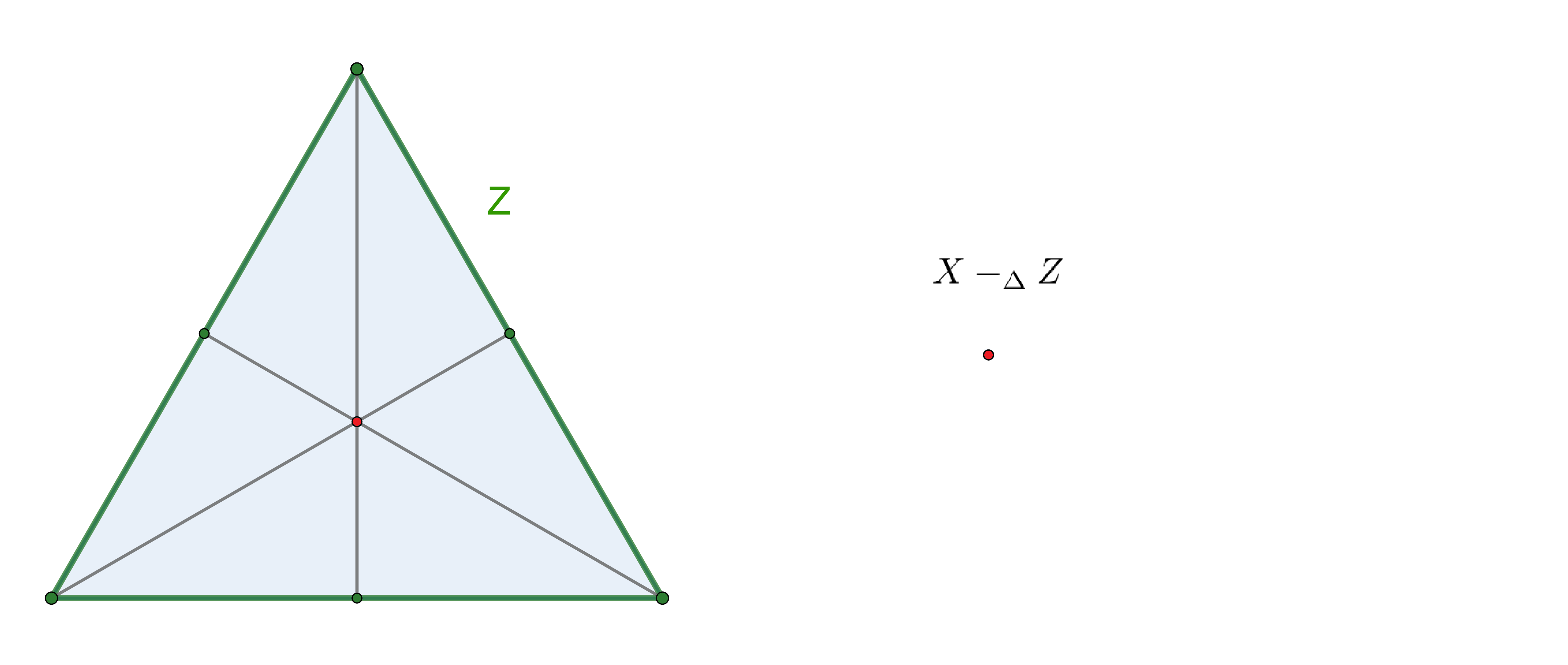}
\end{center}

Taking out $Z$ on that picture would leave us with just a point, which has empty interior. This is not what we want, and we subdivide again. The following picture give us a feeling that the topology is respected when taking out $Z$ when we take double subdivision

\begin{center}
\includegraphics[scale=0.2]{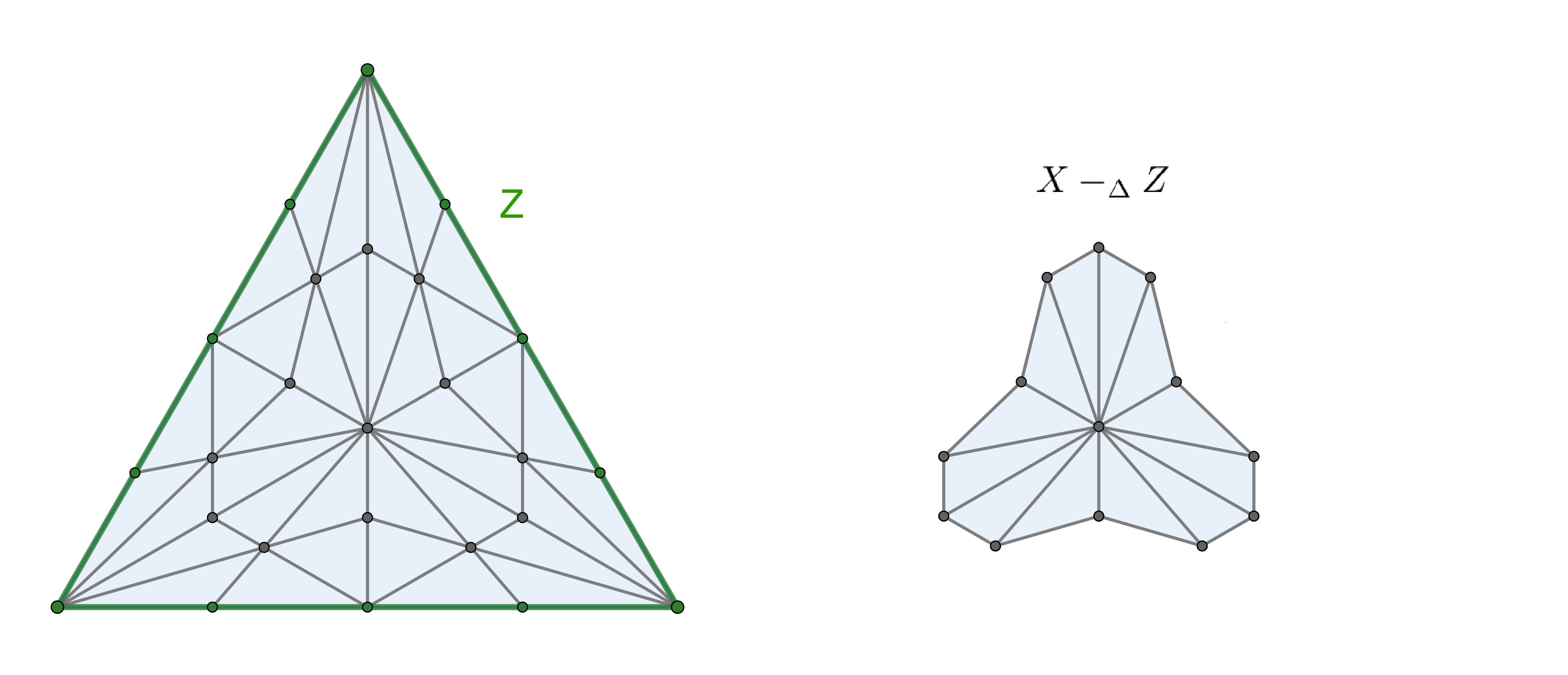}
\end{center}

On this document we are going to consider a simplicial complex that has been subdivided twice, particularly a simplicial complex that is the double subdivision of a fixed triangulation of a PL stratified pseudomanifold.

We will define now and make the mathematics for all we are loosely saying.

\subsubsection{$\D$-topologies on $Sd^2(X)$}

We start by considering $X'$ a simplicial complex such that $|X'|$ is a PL stratified pseudomanifold with stratification given by $|X_0| \subset |X_1| \subset \dots \subset |X_n|=|X'|$, and we subdivide it twice, so we consider $X= Sd^2(X')$ (and we call by abuse of notation $X_k=Sd^2(X_k)$).

We first take a look in the topology to consider. We have that the topology given by $\Sub(X)$ is not very good for our purposes, and this is because if we want to make Deligne happen, we need the smallest open sets of the topology to be somewhat distinguished neighborhoods, and $\Sub(X)$ is generated by $\ims$'s, which are far from being it.

There is one (or probably many, but we will focus on this one) topology that fulfils the task. To define it we first make a couple of definitions.

\begin{definition}
Given $X$ as before and $\s \in X'$, we define
\begin{itemize}
    \item $\lst(\s) = \st(b_\s)$ the star of the barycenter
    \item $\fst(\s) = \underset{\s < \tau}{\bigcup} \lst(\tau)$
\end{itemize}
\end{definition}

We present the following picture to clarify these definitions.

\begin{center}
    \includegraphics[scale=0.3]{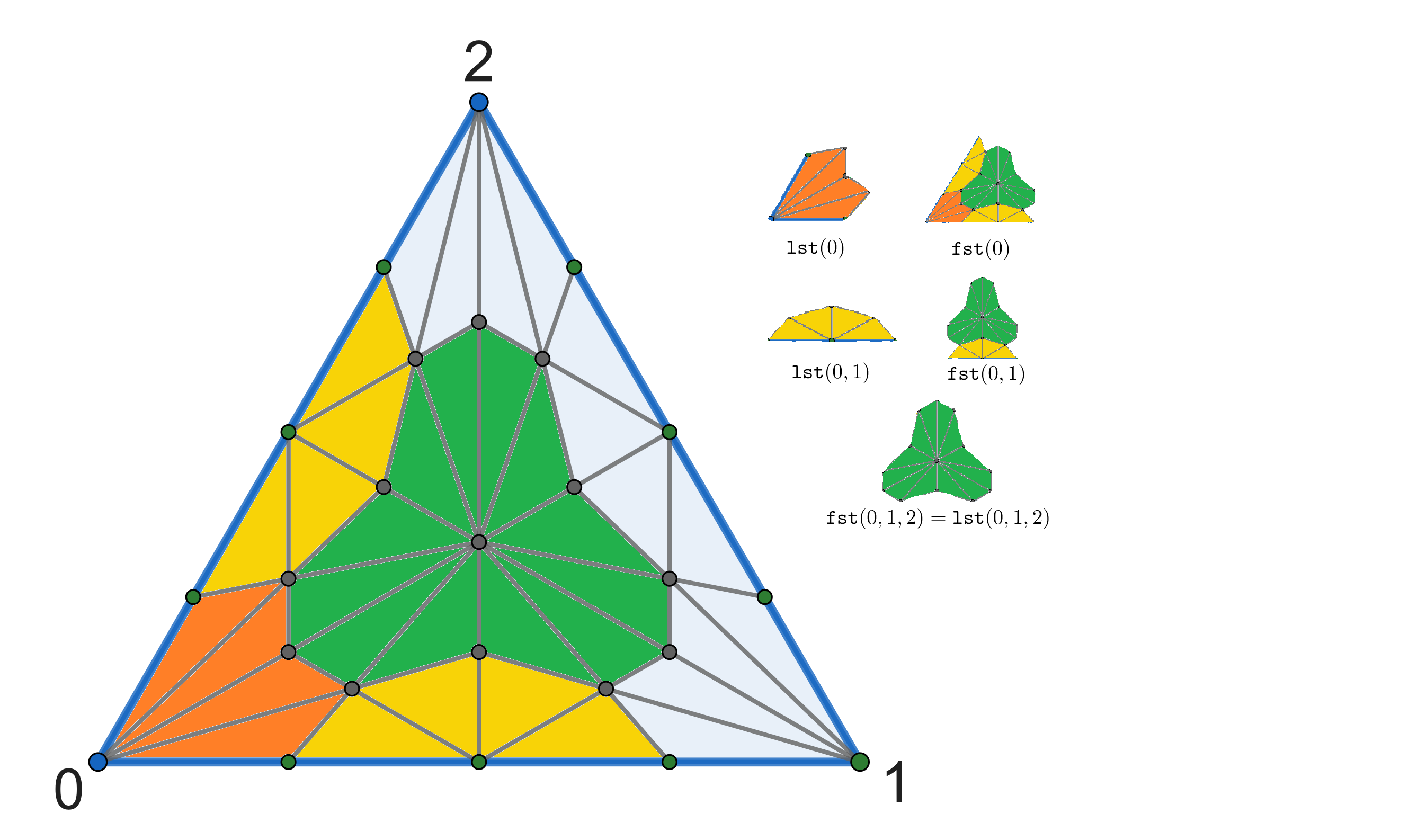}
\end{center}

And we take the topology whose base is $\{ \fst(\s) | \s \in X' \}$ and call it $$\mathcal{T}= < \{ \fst(\s) | \s \in X' \} >$$
This topology is very nice, since the base is closed under intersection, and this leads the stalks to be always evaluation on $\fst(\s)$'s.

\begin{lemma}\label{lemma stalk evaluation on fst}
Given $F \in Sh_\D^\mathcal{T}(X)$ and $\s \in X$
$$ F_\s= F(\fst(\s')) $$
Where $\s' \in X'$ is the lowest dimensional simplex with $|\ims| \subset |\Ima(\s')|$.
\end{lemma}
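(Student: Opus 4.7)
By definition $F_\s$ is the filtered colimit
\[
F_\s = \texttt{colim}_{U \in \mathcal{T},\, \s \in U} F(U)
\]
over open neighborhoods of $\s$ in $\mathcal{T}$, with arrows the sheaf restrictions (so the indexing poset is ordered by reverse inclusion, and the smallest open neighborhood is terminal). Since $\{\fst(\tau') \mid \tau' \in X'\}$ is a base for $\mathcal{T}$, the sub-diagram of basic opens containing $\s$ is cofinal, and I may replace $F_\s$ by $\texttt{colim}_{\tau' \in X',\, \s \in \fst(\tau')} F(\fst(\tau'))$. The plan is to show that this sub-diagram has a unique minimum, namely $\fst(\s')$ for $\s'$ as in the statement; being terminal, it will collapse the filtered colimit to $F(\fst(\s'))$.

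Two ingredients are needed. First, the existence and uniqueness of $\s'$: it is the classical fact that every simplex of the subdivision $Sd^2(X')$ lies, via its interior, in the interior of a unique simplex of $X'$, namely the minimum-dimensional $\s' \in X'$ with $|\s| \subset |\s'|$. Second, the antitonicity of $\fst$: if $\tau'_1 < \tau'_2$ in $X'$ then $\fst(\tau'_2) \subseteq \fst(\tau'_1)$, directly from $\fst(\tau') = \bigcup_{\tau' < \rho'} \lst(\rho')$.

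The combinatorial core is the identity
\[
\{\tau' \in X' : \s \in \fst(\tau')\} = \{\tau' \in X' : \tau' \leq \s'\}.
\]
For $\supseteq$, by antitonicity it suffices to check $\s \in \lst(\s')$: I would exhibit an ambient simplex of $X = Sd(Sd(X'))$ containing $\s$ as a face and $b_{\s'}$ as a vertex, produced by extending the chain of simplices of $Sd(X')$ realizing $\s$ with the singleton corresponding to $b_{\s'}$, using that $\s'$ is the parent of $\s$. For $\subseteq$, if $\s \in \fst(\tau')$ then $\s \in \lst(\rho')$ for some $\rho' \geq \tau'$, giving a common simplex of $X$ containing $\s$ and $b_{\rho'}$; tracking the top elements of the nested chains, together with the parent property of $\s'$, forces $\rho' \leq \s'$ and hence $\tau' \leq \rho' \leq \s'$.

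The main obstacle is this last implication $\s \in \lst(\rho') \Rightarrow \rho' \leq \s'$, which is where the double subdivision genuinely matters (single subdivision is not enough, cf.\ the counterexamples in section \ref{section 6.2}). Once the identity is established, antitonicity makes $\fst(\s')$ the smallest basic open containing $\s$, and the filtered colimit reduces to its value there, yielding the formula.
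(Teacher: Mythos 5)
Your overall route is the same as the paper's: the paper also computes $F_\s$ as the colimit over $\mathcal{T}$-neighbourhoods of $\s$ and simply asserts that $\fst(\s')$ is the smallest one, so your reduction to basic opens and the antitonicity of $\fst$ are fine, as is your implication $\s \in \lst(\rho') \Rightarrow \rho' \leq \s'$. The genuine gap is the other half of your combinatorial identity, namely the claim $\s \in \lst(\s')$. Write $\s = \{A_0 \subsetneq \dots \subsetneq A_p\}$ with each $A_i \in Sd(X')$ a chain of simplices of $X'$. Adjoining the vertex $b_{\rho'} = \{\{\rho'\}\}$ to $\s$ produces a simplex of $Sd^2(X')$ only if the singleton $\{\rho'\}$ is comparable with \emph{every} $A_i$, which forces $\rho' \in A_0$; hence $\s \in \lst(\rho') \Leftrightarrow \rho' \in A_0$, and $\s \in \fst(\tau') \Leftrightarrow \tau' \leq \max(A_0)$. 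Your proposed extension of the chain by $\{\s'\}$ therefore only works when $\s' = \max(A_p)$ already belongs to $A_0$. Concretely, take $X' = \D^1$ with vertices $u,v$, so $\s' = \{u,v\}$, and take $\s = \{\,\{\{u\}\},\,\{\{u\},\{u,v\}\}\,\} \in Sd^2(X')$, the edge joining $u$ to the barycenter of the half-edge $[u,b_{\s'}]$. Then $\s'$ is indeed the lowest-dimensional simplex with $|\ims| \subset |\Ima(\s')|$, but $\{\{u\}\}$ and $\{\{u,v\}\}$ are incomparable, so $\s \notin \lst(\s') = \fst(\s')$; the only basic open containing $\s$ is $\fst(\{u\})$, and the stalk is $F(\fst(\{u\}))$, not $F(\fst(\s'))$.

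So the smallest basic open containing $\s$ is $\fst(\max(A_0))$, i.e.\ $\fst$ of the carrier in $X'$ of the \emph{lowest} barycentric vertex of $\s$, which can be a proper face of the $\s'$ in the statement (the carrier of all of $|\ims|$, which is $\max(A_p)$); the two agree only when $\max(A_0) = \max(A_p)$. Be aware that the same issue sits inside the paper's own one-line proof, which asserts $\fst(\s') = \bigcap_{\s \in Y \in \mathcal{T}} Y$ without verification, and your counterexample shows that this assertion needs the corrected choice of $\s'$; note also that the paper's proof of Proposition \ref{proposition Uk in T} uses precisely the criterion $\rho' \in \min(\s)$, consistent with the computation above. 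The constructive fix is to prove the lemma in the form $F_\s = F(\fst(\max(\min(\s))))$ and to propagate that choice of $\s'$ into the stalkwise verification of (AX 2) in section \ref{section proof of main result}. As written, however, your plan cannot be completed, because the identity $\{\tau' : \s \in \fst(\tau')\} = \{\tau' : \tau' \leq \s'\}$ on which it rests is false for the stated $\s'$.
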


\begin{proof}
This is simply the fact that

$$F_\s= \colim{\s \in Y \in \mathcal{T}} F(Y) $$
Given $\s' \in X'$ is the lowest dimensional simplex with $|\ims| \subset |\Ima(\s')|$, $\fst(\s')= \underset{\s \in Y \in \mathcal{T}}{\bigcap}Y$ and since $\fst(\s') \in \mathcal{T}$ then $\colim{\s \in Y \in \mathcal{T}} F(Y)=F(\fst(\s'))$
\end{proof}

And now we need to establish a chain of subcomplexes to apply the Deligne axioms, we consider
$$ U_{k}^\D = X -_\D X_{n-k} $$
And we define $U_{k}^\D \hookar{\iota_k} U_{k+1}^\D $ the inclusions. We have that these subcomplexes are part of the topology, as the following proposition shows.
\begin{proposition} \label{proposition Uk in T}
For all $k \in \{0, \dots , n \}$
$$U_k^\D = \underset{b_\s \in |X|-|X_{n-k}|}{\bigcup}\fst(\s)$$
In particular $U_k^\D \in \mathcal{T}$
\end{proposition}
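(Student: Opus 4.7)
My approach will be to prove the set equality by both inclusions, exploiting the explicit combinatorial description of $X = Sd^2(X')$. Recall that a vertex of $X$ is a simplex of $Sd(X')$, i.e., a flag $v = \{\rho_0 \subsetneq \dots \subsetneq \rho_m\}$ of simplices of $X'$, and such a vertex lies in $V(X_{n-k}) = V(Sd^2(X_{n-k}))$ iff every $\rho_i$ is in $X_{n-k}$, iff $\max v = \rho_m \in X_{n-k}$ (using that $X_{n-k}$ is a subcomplex and hence face-closed). Moreover a simplex of $X$ is itself a chain of such flags ordered by inclusion. With this dictionary in place, the content of the proposition reduces to comparability arguments.

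For the inclusion $(\supseteq)$, I would fix $\sigma \in X'$ with $b_\sigma \notin |X_{n-k}|$—which amounts to $\sigma \notin X_{n-k}$, since $b_\sigma$ lies in the interior of $|\sigma|$—and take $\mu \in \fst(\sigma)$. By definition there is $\tau \in X'$ with $\sigma \leq \tau$ and $\mu \in \lst(\tau) = \st(b_\tau)$, so $V(\mu) \cup \{b_\tau\}$ is a simplex of $X$, i.e., a chain of flags. For any vertex $v$ of $\mu$, comparability with $b_\tau = \{\tau\}$ combined with $v \neq \emptyset$ forces $\{\tau\} \subseteq v$, so $\tau \in v$ and thus $\max v \geq \tau \geq \sigma$. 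If $\max v$ were in $X_{n-k}$, face-closedness would give $\sigma \in X_{n-k}$, contradicting the hypothesis. Hence every vertex of $\mu$ lies outside $V(X_{n-k})$, so $\mu \in U_k^\D$.

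For $(\subseteq)$, given $\mu \in U_k^\D$ I would let $v_0$ be the smallest flag in $V(\mu)$ under the chain order of the simplex and set $\sigma := \max v_0 \in X'$. Since $v_0 \in V(\mu)$ is not in $V(X_{n-k})$, the description above gives $\sigma \notin X_{n-k}$, whence $b_\sigma \notin |X_{n-k}|$. Because $v_0$ is the minimum flag, $\{\sigma\} \subseteq v_0 \subseteq v$ for every $v \in V(\mu)$, so $V(\mu) \cup \{b_\sigma\}$ remains a chain and is therefore a simplex of $X$. Taking $\tau = \sigma$ then witnesses $\mu \in \lst(\sigma) \subseteq \fst(\sigma)$, with $\sigma$ satisfying the indexing condition. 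The ``in particular'' clause is now immediate: the equality writes $U_k^\D$ as a union of basic opens of $\mathcal{T}$.

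The step I expect to be the main obstacle is the comparability analysis in the $(\supseteq)$ direction: one must correctly exploit that a singleton flag $\{\tau\}$ can only sit inside, never around, a nonempty flag $v$, so that in any chain of flags all nontrivial members must already contain $\tau$. Once this combinatorial lemma is secured, the rest reduces to face-closedness of $X_{n-k}$ and the tautology that $\sigma = \max v_0$ belongs to $v_0$.
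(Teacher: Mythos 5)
Your argument is correct and is essentially the paper's own proof: both directions unwind simplices of $X=Sd^2(X')$ as chains of flags of simplices of $X'$, characterize the vertices lying in $X_{n-k}$ by whether the top element of the flag lies in $X_{n-k}'$ (face-closedness), and use comparability of the flags with the barycenter vertex $b_\tau$ to pass between $U_k^\D$ and the stars. The only difference is cosmetic: for the inclusion $U_k^\D \subseteq \bigcup \fst(\s)$ you take $\s$ to be the maximal element of the minimal flag, whereas the paper picks an arbitrary element of that flag not lying in $X_{n-k}'$; the two choices work identically.
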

\begin{proof}
Consider first 
$$\eta= \{ \{ \eta_0^0 ,\dots, \eta_{l_0}^0 \} , \dots, \{ \eta_{0}^m ,\dots, \eta_{l_m}^m \} \} \in X-_\D X_{n-k}$$
this means that $\Ima(\eta) \cap Sd^2(X_{n-k}')= \emptyset$, which implies in particular that
$$ \{ \eta_{0}^j ,\dots, \eta_{l_j}^j \} \notin Sd(X_{n-k}') \text{ for all } j \in \{ 0,\dots, m \} $$
In particular for $j=0$, there is $\eta_i^0 \notin X_{n-k}'$. For this simplex $b_{\eta_i^0} \in |X|-|X_{n-k}|$, and we have that $\eta \in \lst(\eta_i^0) \leq \fst(\eta_i^0)$ (recall here that $ \{ \eta_{0}^j ,\dots, \eta_{l_j}^j \} \subset  \{ \eta_{0}^{j+1} ,\dots, \eta_{l_{j+1}}^{j+1} \}$ for all $j$)

On the other direction, consider
$$\eta= \{ \{ \eta_0^0 ,\dots, \eta_{l_0}^0 \} , \dots, \{ \eta_{0}^m ,\dots, \eta_{l_m}^m \} \} \in \lst(\tau)$$
with $\s, \tau \in X'$ such that $b_\s \in |X|-|X_{n-k}|$ and $\s < \tau$.

$\eta \in \lst(\tau)$ implies that $\tau=\eta_i^0$ for some $i$. Now, since $\s < \eta_i^0$ and $b_\s \in |X|-|X_{n-k}| $ we have that $\eta_i^0 \notin X_{n-k}$ which implies that $ \{ \eta_{0}^j ,\dots, \eta_{l_j}^j \} \notin Sd(X_{n-k}')$ for all $j$, which implies that $\Ima(\eta) \cap X_{n-k}=\emptyset $

\end{proof}

This implies that $(\iota_k)_\star F(Z)=F(U_{k}^\D \cap Z)$. Having this at hand we can state the Deligne axioms as shown in section \ref{abstract deligne axioms}. We will make the actual statement. Consider $D_\D(\mathcal{T})$ the derived category for $\mathcal{T}$, with $\tau_k$ the truncation functor and let $\bar{p}$ be a GM-perversity.

\begin{definition}
$F \in D_\D(\mathcal{T})$ is said to satisfy the $\D$-Deligne axioms if
\begin{enumerate}
    \item[(AX 1)] $ F|_{U_{0}^\D} \simeq \mathbb{R}$ 
    \item[(AX 2)] $F|_{U_{k+1}^\D} \simeq \tau_{\bar{p}(k)} R(\iota_k)_\star F|_{U_{k}^\D}$ $\forall k \in \{0, \dots, n \}$
\end{enumerate}
For a chain of functors satisfying these axioms, we say $F \in Del_\D$.
\end{definition}

And as shown in theorem \ref{abstract deligne axioms}, we have that the chains of functors that satisfy $\D$-Deligne are given up to quasi-isomorphism by $\tau_{\bar{p}(n)} R(\iota_n)_\star \dots \tau_{\bar{p}(2)} R(\iota_2)_\star C_\D $ 

We are now after sheaves that would satisfy the Deligne axioms, and with this in mind we develop the following functor.

\subsubsection{A good functor $\Phi: Sh(|X|) \to Sh_\D(X)$}\label{section functor Phi}

We define a functor
$$ \Phi : Sh(|X|) \to Sh_\D^\mathcal{T}(X) $$
Given by $\Phi(F)(Y)= \Gamma(|Y|,F)=\texttt{colim}_{|Y| \subseteq U}F(U)$. This is actually comes from the precomposition with the realization functor. We will abuse notation and call $\Gamma(|Y|,F)=F(|Y|)$.

We need to prove that $\Phi(F)$ is a sheaf, which we state in a proposition together with another useful fact

\begin{proposition} \label{proposition Phi soft to flasque}
Given the setting as before we have
\begin{enumerate}
    \item For all sheaves $F \in \Sh(|X|)$, we have that $\Phi(F) \in \Sh_\D^\T(X)$.
    \item If $F$ is soft then $\Phi(F)$ is flasque.
\end{enumerate}
\end{proposition}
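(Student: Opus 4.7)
The plan is to reduce both statements to standard facts about sheaves on $|X|$ via the basic observations that for subcomplexes $A, B \leq X$ one has $|A \cup B| = |A| \cup |B|$ and $|A \cap B| = |A| \cap |B|$, and that $|A|$ is closed in $|X|$. I would handle the sheaf axiom first and then deduce the soft-to-flasque implication using the identification $\Phi(F)(Y) = \Gamma(|Y|, F|_{|Y|})$, which holds because $|Y|$ is closed and sections on a closed set coincide with the filtered colimit of sections over open neighborhoods.

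For part 1, let $Y = \bigcup_i Y_i$ be a cover in $\T$. Separation is the easier half: if $s \in \Phi(F)(Y)$ is represented by $\tilde s \in F(U)$ for some open $U \supset |Y|$ and $s|_{Y_i} = 0$ for each $i$, then for each $i$ there is an open $V_i$ with $|Y_i| \subseteq V_i \subseteq U$ on which $\tilde s$ vanishes; the sheaf property of $F$ applied to the open cover $\{V_i\}$ of $V = \bigcup_i V_i \supseteq |Y|$ gives $\tilde s|_V = 0$, whence $s = 0$ in $\Phi(F)(Y)$. For gluing, represent compatible sections $s_i \in \Phi(F)(Y_i)$ by $\tilde s_i \in F(U_i)$ with $|Y_i| \subseteq U_i$ open, and pick open $W_{ij} \supseteq |Y_i| \cap |Y_j| = |Y_i \cap Y_j|$ on which $\tilde s_i$ and $\tilde s_j$ agree. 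The nontrivial step is to shrink the $U_i$ to open $U_i' \supseteq |Y_i|$ with $U_i' \cap U_j' \subseteq W_{ij}$; once this is done, the sheaf $F$ glues the $\tilde s_i|_{U_i'}$ into a section over $U' = \bigcup_i U_i' \supseteq |Y|$, yielding the required element of $\Phi(F)(Y)$. The shrinking is available because $|X|$ is metrizable (hence normal and paracompact) and because simplicial complexes are locally finite, so one may as well refine to a locally finite subcover of basic opens $\fst(\sigma)$.

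For part 2, fix $Z \leq Y$ in $\T$. Both $|Z|$ and $|Y|$ are closed in $|X|$, and $|Z|$ is closed in $|Y|$. A standard result (see e.g. Godement or Bredon) states that the restriction of a soft sheaf to a closed subspace is soft; hence $F|_{|Y|}$ is soft on $|Y|$, and by the defining property of softness the map $\Gamma(|Y|, F|_{|Y|}) \to \Gamma(|Z|, F|_{|Y|}|_{|Z|})$ is surjective. Using the closed-set identification $\Gamma(|Y|, F|_{|Y|}) = \colim_{U \supseteq |Y|} F(U) = \Phi(F)(Y)$ (and analogously for $Z$), this gives surjectivity of the restriction $\Phi(F)(Y) \to \Phi(F)(Z)$, which is exactly flasqueness of $\Phi(F)$ with respect to the topology $\T$.

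The main obstacle I expect is the shrinking argument in the gluing step of part 1: controlling open neighborhoods of an infinite union of closed realizations while keeping pairwise intersections inside the prescribed overlaps $W_{ij}$. I would first dispatch the finite-cover case by induction using normality of $|X|$, then pass to arbitrary covers via local finiteness of the simplicial structure and paracompactness of $|X|$. If this direct route becomes unwieldy, a cleaner alternative is to exhibit $\Phi$ as the pullback along a morphism of sites $(X, \T) \to (|X|, \text{open})$ sending $U \in \mathrm{open}(|X|)$ to $\{Y \in \T : |Y| \subseteq U\}$ and observe that this morphism is continuous and cocontinuous for the relevant coverages, so that $\Phi$ is automatically a sheaf.
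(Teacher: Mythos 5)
Your proposal is correct and follows essentially the same route as the paper: the colimit presentation of $\Phi(F)(Y)$, separation by taking the union of the witnessing opens, gluing by shrinking the $U_i$ to opens whose pairwise intersections lie in the agreement sets $W_{ij}$ (the paper does this shrinking explicitly with a metric on $|X|$, using that basic opens are compact and meet only finitely many others, which is the same idea as your normality/local-finiteness argument), and deducing part 2 from the fact that realizations of subcomplexes are closed so softness of $F$ gives surjectivity of the restrictions.
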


\begin{proof}
We work presenting $\Gamma(|Y|,F)$ as $\{ (s,U) : |Y| \subseteq U,s \in F(U) \}/\sim$, where $(s,U) \sim (s',U')$ if there is an open set $V$ in $|X|$ with $|Y| \subseteq V \subseteq U \cap U'$ such that $s|_V=s'|_V$.

Now consider $Y=\bigcup\limits_{i \in I}Y_i$

For the first gluing condition, we take $(s,U),(t,U') \in \Gamma(|Y|,F)$ with $(s,U)|_{Y_i}=(t,U')|_{Y_i}$, that is, for each $i$ there exists $V_i$ with $|Y_i| \subseteq V_i \subseteq U \cap U' $ and $s|_{V_i}=t|_{V_i}$. Consider $V= \bigcup\limits_{i \in I}V_i$ which satisfies $|Y| \subseteq V \subseteq U \cap U'$, and $s|_V=t|_V$ by the gluing property of $F$, which is to say that $(s,U) \sim (t,U')$ in $\Gamma(|Y|,F)$.

For the second gluing condition, take $(s_i,U_i) \in \Gamma(|Y_i|,F) $ with $(s_i,U_i)|_{Y_i \cap Y_j} = (s_j,U_j)|_{Y_i \cap Y_j} $ for each $i$ and $j$, which means that there are open sets $V_{ij}$ with $|Y_i \cap Y_j| \subseteq V_{ij} \subseteq U_i \cap U_j$ such that $s_i|_{V_{ij}}=s_j|_{V_{ij}}$.

By topological properties we take open sets $V_i$ with $|Y_i| \subseteq V_i \subseteq U_i$ and $V_i \cap V_j \subseteq V_{ij}$. To see this, first give a metric $d$ to the (realization of the) simplicial complex. We may assume that each $Y_i$ belongs to the base, which implies that they are compact and that each $Y_i$ intersects only a finite amount of other $Y_j$'s. For each $i,j \in I$ consider $d(|Y_i|,U_i^c)=\epsilon_i$ and $d(|Y_i \cap Y_j|,V_{ij}^c)=\delta_{ij}$, and then
$$ V_i= \{ x \in |X| \text{ : } d(x,|Y_i|)< \texttt{min} \{ \epsilon_i, \epsilon_j, \delta_{ij}||Y_i \cap Y_j| \neq \emptyset \} \} $$

Now, by the gluing property of $F$ we obtain $s \in F(\bigcup\limits_{i \in I}V_i)$ such that $s|_{V_i}=s_i|_{V_i}$. Since $|Y| \subseteq \bigcup\limits_{i \in I}V_i$, this says that $(s, \bigcup\limits_{i \in I}V_i)|_{Y_i}=(s_i,U_i)$.

The second statement follows from the fact that (realization of) subcomplexes are closed subsets of $|X|$.
\end{proof}
Some remarks of this proposition are in order.

\vspace{0.2 cm}
\textbf{Remarks:}
\begin{itemize}
    \item The functor $\Phi$ has a left adjoint at the level of presheaves given by $\Psi(F)(U)=F(\bigcup\limits_{|Y|\subseteq U}Y)$ which sadly does not rise to the level of sheaves. However, a sheafification of $\Psi$ might open an interesting line of work.
    
    \item The second statement of the proposition tells us that by considering a simplicial model $X \to |X|$, we go up a level, since flasque is a stronger condition that soft. This means that in general \textit{soft sheaves are easier to find than flasque sheaves}. To illustrate this we present a result for which we will not develop the hole theory but it can be instinctively understood.
    \begin{center}
        If $\mathcal{R}$ is a \textit{soft} sheaf of algebras, any sheaf of $\mathcal{R}$-modules is \textit{soft}.
    \end{center}
    That is, softness is a hereditary condition. In (\cite{BlownupDel} Proposition 2.6), the authors use this result to prove that $\textbf{N}_{\bar{p}}^\star$ is soft, by using that it is a sheaf of $\textbf{N}_{\bar{0}}^\star$-modules (using the product $\textbf{N}_{\bar{0}}^\star \otimes \textbf{N}_{\bar{p}}^\star \too{- \cup -} \textbf{N}_{\bar{p}}^\star$), and that $\textbf{N}_{\bar{0}}^\star$ is soft, which is rather easy to prove.
    
    \item Since $\IpC$ is a chain of soft sheaves, we have that $\Phi(\IpC)$ is flasque. This in particular means that
    $$ \mathbb{H}^\star(X, \Phi(\IpC)) \simeq H^\star(\Phi(\IpC)(X)) \simeq H^\star(\IpC(|X|)) \simeq IH_\star^{\bar{p}}(|X|)$$
    That is, the hyperhomology of $\Phi(\IpC)$ on $X$ corresponds to the intersection homology of $|X|$. Compare this with the example given on section \ref{section simplicial sheaves}. We will develop this further in corollary \ref{main corollary}.
\end{itemize}

\subsection{Our main results} \label{section main result}

We have arrived now to the definitive moment, which is to find meaningful sheaves that satisfy the $\D$-Deligne axioms. There are in fact plenty of sheaves that satisfy the $\D$-Deligne axioms, since our good functor $\Phi$ acts as a machine of producing $\D$-Deligne sheaves.

During this section we will say "In the setting given on last section". Even though we just described this setting, given the importance of this results and in consideration of the lazy reader, we will recall the setting given on last section. 
\begin{itemize}
    \item We take $X'$ a simplicial complex subdivided twice with a chain of subcomplexes $X_0 \leq \dots \leq X_n = X'$ such that $|X'|$ is a PL stratified pseudomanifold with stratification $|X_0| \subset |X_1| \subset \dots \subset |X_n|=X$. We subdivide it twice $X=Sd^2(X')$ and call by abuse of notation $X_k=Sd^2(X_k)$.
    \item We consider the topology $\T \subset \Sub(X)$ with base $\{\fst(\s) | \s \in X' \}$. We take $U_k^\D = X-_\D X_k \in \T$ and $U_k^\D \hookar{\iota_k} U_{k+1}^\D$.
    \item We call $D_\D(\T)$ the derived category of $\Sh_\D^\T(X)$. On this category we say that $F \in Del_\D$ if
\begin{enumerate}
    \item[(AX 1)] $ F|_{U_{0}^\D} \simeq \mathbb{R}$ 
    \item[(AX 2)] $F|_{U_{k+1}^\D} \simeq \tau_{p(k)} R(\iota_k)_\star F|_{U_{k}^\D}$ $\forall k \in \{0, \dots, n \}$
\end{enumerate}
Theorem \ref{teo deligne} says that $F \in Del_\D \Leftrightarrow F \simeq \tau_{\bar{p}(n)} R(\iota_n)_\star \dots \tau_{\bar{p}(2)} R(\iota_2)_\star C_\D $ 
    \item We have a functor $\Phi: \Sh(|X|) \to \Sh_\D^\T(X)$ with formula $\Phi(F)(Y)= \Gamma(|Y|,F)= \colim{|Y| \subset U}F(U)$, which takes soft sheaves into flasque ones.
\end{itemize}

As said, $\Phi$ is a machine of producing $\D$-Deligne sheaves. One important example of this is that it transforms the sheaf $\IpC$ into a $\D$-Deligne sheaf.

\begin{proposition} \label{prop Phi(IpC) satisfies DDeligne}
In the setting given on last section, 
$$\Phi(\IpC) \in Del_\D $$
\end{proposition}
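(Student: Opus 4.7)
The strategy is to bootstrap from the topological Deligne axioms for $\IpC$ on $|X|$ (due to Goresky--MacPherson) and transport them through $\Phi$. Two verifications are required, the normalization (AX 1) and the attaching (AX 2), and both reduce to compatibility statements for $\Phi$ with restriction, truncation, and derived pushforward.

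For (AX 1), the observation is that $\IpC$ restricted to the regular stratum of $|X|$ is quasi-isomorphic to the constant sheaf $\mathbb{R}$, since no GM-perversity excludes chains of top codimension. The double subdivision $X = Sd^2(X')$ is precisely what ensures that the geometric realization of $U_0^\D$ lands inside this regular stratum (cf.\ the discussion and pictures of Section \ref{section 6.2}); combining this with the definition $\Phi(F)(Y) = \Gamma(|Y|, F)$ and the stalk computation of Lemma \ref{lemma stalk evaluation on fst} yields $\Phi(\IpC)|_{U_0^\D} \simeq \mathbb{R}_{U_0^\D}$ after sheafification.

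For (AX 2), I would start from the topological Deligne identity
\[
\IpC|_{U_{k+1}^{\mathrm{top}}} \;\simeq\; \tau_{p(k)}\, R(\iota_k^{\mathrm{top}})_\star\, \IpC|_{U_k^{\mathrm{top}}},
\]
where $U_k^{\mathrm{top}} = |X| - |X_{n-k}|$, and apply $\Phi$ to both sides. Three compatibility lemmas are needed: (a) $\Phi$ commutes with restriction to $U_k^\D$, which uses the double-subdivision identification of $|U_k^\D|$ with an appropriate neighborhood of $U_k^{\mathrm{top}}$ that does not alter sections of $\IpC$; (b) $\Phi$ commutes with the cohomological truncation $\tau_{p(k)}$, which is formal since $\Phi$ is constructed from the realization functor via a filtered colimit and therefore preserves cohomology sheaves; and (c) $\Phi$ commutes with $R(\iota_k)_\star$.

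The principal obstacle is (c), and this is exactly where Proposition \ref{proposition Phi soft to flasque} earns its keep. Since $\IpC$ is soft on $|X|$, the underived $(\iota_k^{\mathrm{top}})_\star \IpC|_{U_k^{\mathrm{top}}}$ already computes the derived pushforward; symmetrically, $\Phi(\IpC)|_{U_k^\D}$ is flasque on $(X, \T)$, so $(\iota_k)_\star$ applied to it computes $R(\iota_k)_\star$. With both derived functors replaced by their underived versions, the identification
\[
\Phi\bigl((\iota_k^{\mathrm{top}})_\star G\bigr)(Y) \;=\; \Gamma\bigl(|Y| \cap U_k^{\mathrm{top}}, G\bigr) \;\simeq\; \Gamma\bigl(|Y \cap U_k^\D|, G\bigr) \;=\; (\iota_k)_\star \Phi(G)(Y)
\]
reduces to the geometric comparison already invoked in (a), applied to $G = \IpC|_{U_k^{\mathrm{top}}}$. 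Chaining (a)--(c) yields the attaching isomorphism and establishes $\Phi(\IpC) \in Del_\D$.
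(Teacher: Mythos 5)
Your high-level plan---transport the topological Deligne axioms for $\IpC$ through $\Phi$---matches the spirit of the paper, and your use of Proposition \ref{proposition Phi soft to flasque} to trade derived pushforwards for underived ones is sound. But there is a genuine gap: your compatibilities (a)--(c) are not formal, and they conceal exactly the content the paper spends Section \ref{section proof of main result} proving. The realization $|U_k^\D|=|X-_\D X_{n-k}|$ is a \emph{closed} subset of $|X|$ strictly smaller than $U_k^{\mathrm{top}}=|X|-|X_{n-k}|$, because $-_\D$ removes an entire star-shaped collar around $X_{n-k}$; hence the middle identification $\Gamma(|Y|\cap U_k^{\mathrm{top}},G)\simeq\Gamma(|Y\cap U_k^\D|,G)$ in your step (c) is false at the level of sections, and can at best hold as a quasi-isomorphism---which itself needs a proof that the inclusion $|Y\cap U_k^\D|\hookrightarrow|Y|\cap U_k^{\mathrm{top}}$ is a stratified homotopy equivalence for the relevant $Y$, namely the basic opens $\fst(\s')$. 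Step (b) is likewise not formal: by Lemma \ref{lemma stalk evaluation on fst} the stalk of $\Phi(\IpC)$ at $\s$ is $\Gamma(|\fst(\s')|,\IpC)$, so the cohomology sheaves of $\Phi(\IpC)$ are intersection cohomologies of closed stars, and identifying these with the local cohomology of $\IpC$ (which is what commuting $\Phi$ past $\tau_{p(k)}$ amounts to) again requires geometric input, not just the filtered-colimit description of $\Phi$.

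That missing input is precisely Lemma \ref{lemma 6}: up to stratified homotopy equivalence, $|\fst(\s)|\simeq|\lst(\s)|\simeq\mathbb{R}^m\times cL$ and $|\fst(\s)-_\D X_m|\simeq\mathbb{R}^m\times L$. The paper's proof is stalkwise: since the $U_k^\D$ are open in $\T$ and stalks are evaluations on $\fst(\s')$, (AX 2) reduces to $\IpC(|\fst(\s')|)\simeq\tau_{p(k)}\IpC(|U_k^\D\cap\fst(\s')|)$, which follows from the cone formula once Lemma \ref{lemma 6} is in hand; establishing that lemma (fatness of stars, the decomposition $\lst(\s)\simeq(\lst(\s)\cap\s)\times cL^\s$, the retraction of $\fst(\s)$ onto $\lst(\s)$, and the comparison $|\lst(\s)-_\D X_m|\simeq|\lst(\s)|-|X_m|$) occupies the remainder of the section. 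Your proposal invokes the ``double-subdivision identification'' as though it were already available from Section \ref{section 6.2}, but that section only motivates it with pictures; as written, your argument assumes the hard part of the result.
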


This proposition is not easy to prove, and we will spend the last section of the chapter proving it. Observe that the sheaf $\Phi(\IpC)$ is very \textit{meaningful} since, as remarked at the end of last section, its hyperhomology corresponds to intersection homology.
\begin{equation}\label{HH of Phi(IpC) is IH}
    \mathbb{H}^\star(X, \Phi(\IpC)) \overset{(1)}{\simeq} H^\star(\Phi(\IpC)(X)) \overset{(2)}{\simeq} H^\star(\IpC(|X|)) \overset{(3)}{\simeq} IH_\star^{\bar{p}}(|X|)
\end{equation}

This equivalence is easy to see, however we are going to remark the importance that it does have by explaining each isomorphism.
\begin{enumerate}
    \item[(1)] comes from the fact that $\Phi(\IpC)$ is flasque, so 
    $$ \mathbb{H}^i(X,\Phi(\IpC)= R^i \Gamma(X,\Phi(\IpC))= H^i(\Gamma(X,\Phi(\IpC)))=H^i(\Phi(\IpC(X)))$$
    \item[(2)] is because $\Phi(\IpC(X)))=\Gamma(|X|,\IpC)=\IpC(|X|)$.
    \item[(3)] Corresponds to the definition of (Borel-Moore) intersection homology.
\end{enumerate}

The proposition just presented has two important corollaries, that we state as theorems. The first of the corollaries tells us that the characterization up to quasi isomorphism of the Deligne axioms expresed in theorem \ref{teo deligne}, implies that not only $\Phi(\IpC)$ satisfies the $\D$-Deligne axioms, but also any other chain of sheaves in its quasi isomorphism class does.

\begin{theorem} \label{main theorem}
In the setting given in last section we have
$$F \in Del \Longrightarrow \Phi(F) \in Del_\D$$
\end{theorem}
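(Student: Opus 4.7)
The plan is to leverage the characterization of $Del$ and $Del_\D$ as single quasi-isomorphism classes (theorem \ref{teo deligne} applied in each of the two settings), combined with the fact that $\Phi(\IpC) \in Del_\D$ (proposition \ref{prop Phi(IpC) satisfies DDeligne}).

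First, by theorem \ref{teo deligne} applied to the topological Deligne axioms, $F \in Del$ is equivalent to $F$ being quasi-isomorphic in $D(|X|)$ to the classical Deligne sheaf. The Goresky--MacPherson result ensures that $\IpC$ itself satisfies these topological axioms with $F_0 = \mathbb{R}_{|X|-|X_{n-2}|}$, so we have $F \simeq \IpC$ in $D(|X|)$.

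Next, I apply $\Phi$ to both sides. The main technical point is that $\Phi$ should preserve quasi-isomorphisms: after replacing $F$ and $\IpC$ by soft resolutions, proposition \ref{proposition Phi soft to flasque} turns these into flasque resolutions on $(X,\T)$. Flasque sheaves are acyclic for the simplicial global sections, so this soft-to-flasque mechanism allows us to derive $\Phi$, and a quasi-isomorphism between soft complexes on $|X|$ descends to a quasi-isomorphism in $D_\D(\T)$ after applying $\Phi$. Hence $\Phi(F) \simeq \Phi(\IpC)$ in $D_\D(\T)$.

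Finally, proposition \ref{prop Phi(IpC) satisfies DDeligne} gives $\Phi(\IpC) \in Del_\D$. By proposition \ref{prop 112} (the simplicial incarnation of theorem \ref{teo deligne}), $Del_\D$ coincides with a single quasi-isomorphism class of chains of sheaves, so any chain quasi-isomorphic to $\Phi(\IpC)$ is automatically in $Del_\D$. We conclude $\Phi(F) \in Del_\D$. The hard part will be making precise the passage from $F \simeq \IpC$ in $D(|X|)$ to $\Phi(F) \simeq \Phi(\IpC)$ in $D_\D(\T)$: one must argue that $\Phi$, a priori only a functor on sheaves, admits a well-behaved derived extension, which is precisely the role of proposition \ref{proposition Phi soft to flasque}.
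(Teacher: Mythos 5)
Your proposal is correct and follows essentially the same route as the paper: invoke theorem \ref{teo deligne} to identify $Del$ with the quasi-isomorphism class of $\IpC$, apply $\Phi$ to get $\Phi(F)\simeq\Phi(\IpC)$, use proposition \ref{prop Phi(IpC) satisfies DDeligne}, and conclude with theorem \ref{teo deligne} again on the simplicial side. The only difference is that you explicitly flag and sketch why $\Phi$ descends to quasi-isomorphism classes (via soft resolutions and proposition \ref{proposition Phi soft to flasque}), a point the paper's proof passes over silently.
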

\begin{proof}
Using theorem \ref{teo deligne}, if we have $F \in Del \Rightarrow F \simeq \IpC \Rightarrow \Phi(F) \simeq \Phi(\IpC)$, then by the last proposition, $\Phi(\IpC) \in Del_\D$, and then $\Phi(F) \in Del_\D$ (once again using theorem \ref{teo deligne}).
\end{proof}
The second corollary expands on equation (\ref{HH of Phi(IpC) is IH}). Putting together proposition \ref{prop Phi(IpC) satisfies DDeligne}, equation (\ref{HH of Phi(IpC) is IH}) and theorem \ref{teo deligne} we obtain a powerful corollary.

\begin{theorem} \label{main corollary}
In the setting given in last section, we have that if $F \in Del_\D$ then
$$ \mathbb{H}^\star(X,F) \simeq IH^p_\star(|X|) $$
\end{theorem}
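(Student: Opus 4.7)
The plan is to assemble this result entirely from ingredients that have already been set up in the preceding sections, since Theorem \ref{main corollary} is essentially the payoff of a chain of earlier work rather than a place where new arguments have to be invented. The key observation is that the abstract characterization Theorem \ref{teo deligne} collapses all of $Del_\D$ to a single quasi-isomorphism class in $D_\D(\T)$, namely the class of the Deligne sheaf $\textsl{P}_{\mathbb{R}} = \tau_{\bar{p}(n)} R(\iota_n)_\star \dots \tau_{\bar{p}(0)} R(\iota_0)_\star \mathbb{R}_{U_0^\D}$. So once we know one concrete, computable representative of that class whose hypercohomology is intersection cohomology, every $F \in Del_\D$ inherits the same hypercohomology.

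First I would invoke Proposition \ref{prop Phi(IpC) satisfies DDeligne} to produce such a representative: $\Phi(\IpC) \in Del_\D$. Then, given an arbitrary $F \in Del_\D$, Theorem \ref{teo deligne} (instantiated in the derived-categorical setting of Section \ref{section 6.2}) yields isomorphisms $F \simeq \textsl{P}_{\mathbb{R}} \simeq \Phi(\IpC)$ in $D_\D(\T)$. Since hyperhomology is well-defined on the derived category, this already gives $\mathbb{H}^\star(X,F) \simeq \mathbb{H}^\star(X, \Phi(\IpC))$.

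Next I would unwind the right-hand side exactly as in equation (\ref{HH of Phi(IpC) is IH}). Because $\IpC$ is a complex of soft sheaves on $|X|$, Proposition \ref{proposition Phi soft to flasque} tells us $\Phi(\IpC)$ is flasque, hence $\Gamma$-acyclic, so $\mathbb{H}^\star(X,\Phi(\IpC)) \simeq H^\star(\Gamma(X,\Phi(\IpC)))$. The definition of $\Phi$ then gives $\Gamma(X,\Phi(\IpC)) = \Phi(\IpC)(X) = \Gamma(|X|,\IpC) = \IpC(|X|)$, and the cohomology of this complex is by definition $IH^{\bar{p}}_\star(|X|)$ in the Borel--Moore setting of \cite{Banagl}. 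Chaining these identifications delivers the claim.

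The proof as such is a one-line diagram chase once the earlier results are in hand; the genuine obstacle, which has been deliberately deferred, is Proposition \ref{prop Phi(IpC) satisfies DDeligne}. That is where one must check the two $\D$-Deligne axioms for $\Phi(\IpC)$: the normalization $\Phi(\IpC)|_{U_0^\D} \simeq \mathbb{R}_{U_0^\D}$ (which uses that on the regular stratum $\IpC$ computes ordinary cohomology) and the truncated pushforward condition $\Phi(\IpC)|_{U_{k+1}^\D} \simeq \tau_{\bar{p}(k)} R(\iota_k)_\star \Phi(\IpC)|_{U_k^\D}$, whose local verification at a stalk $\Phi(\IpC)_\s = \IpC(\fst(\s'))$ via Lemma \ref{lemma stalk evaluation on fst} reduces to the usual cone-formula computation of intersection cohomology on distinguished neighborhoods. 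Everything in the present statement is then a formal consequence.
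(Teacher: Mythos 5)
Your argument is exactly the one the paper intends: it derives the theorem by combining Proposition \ref{prop Phi(IpC) satisfies DDeligne}, Theorem \ref{teo deligne} (to collapse $Del_\D$ to the quasi-isomorphism class of $\Phi(\IpC)$), and the flasqueness computation of equation (\ref{HH of Phi(IpC) is IH}). The proposal is correct and takes essentially the same route, including the accurate observation that all genuine difficulty is deferred to Proposition \ref{prop Phi(IpC) satisfies DDeligne}.
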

So the $\D$-Deligne axioms provide a characterization of chains of sheaves, up to quasi isomorphism, that compute the classical intersection homology. This last theorem can be understood as a procedure that reads as follows
\begin{enumerate}
    \item Consider $X$ a PL stratified pseudomanifold
    \item Take $|K| \to X$ a compatible triangulation (such that each strata is a subcomplex), and subdivide it twice.
    \item If $F \in Del_\D$ then
    $$ \mathbb{H}^\star(K,F) \simeq IH^{\bar{p}}_\star(X) $$
\end{enumerate}

\subsection{Proof of the main result} \label{section proof of main result}

We will advocate us into the proof of the proposition now. The first $\D$-Deligne axiom is obvious. As for the second one, it is far from being so, and it will take us the rest of the document to see it.

\subsubsection{Simplifying (AX 2)}

We are going to prove the second axiom using the stalks
\begin{equation*} 
    (\Phi(\IpC)|_{U_{k+1}^\D})_\s \simeq (\tau_{p(k)} (\iota_k)_\star \Phi(\IpC)|_{U_{k}^\D})_\s \text{ for all } \s \in X
\end{equation*}
And we can use the fact that the subsimplicial complexes $U_k^\D$ are open to simplify this a little
\begin{equation} \label{equation stalks}
    (\Phi(\IpC))_\s \simeq (\tau_{p(k)} (\iota_k)_\star \Phi(\IpC)|_{U_{k}^\D})_\s \text{ for all } \s \in U_{k+1}^\D
\end{equation}

We recall from lemma \ref{lemma stalk evaluation on fst} that in $\mathcal{T}$ stalks are simple evaluations.
$$ F_\s= F(\fst(\s')) $$
So now (\ref{equation stalks}) becomes
\begin{equation*}
        \Phi(\IpC)(\fst(\s')) \simeq (\tau_{p(k)} (\iota_k)_\star \Phi(\IpC)|_{U_{k}^\D})(\fst(\s')) \text{ for all } \s \in U_{k+1}^\D
\end{equation*}
Now, by the observation following \ref{proposition Uk in T}, we have that for the right side of the equation
$$(\tau_{p(k)} (\iota_k)_\star \Phi(\IpC)|_{U_{k}^\D})(\fst(\s')) \simeq \tau_{p(k)} \IpC (|U_{k}^\D \cap \fst(\s')|) $$

So we are left to prove that

$$ \IpC (|\fst(\s')|) \simeq \tau_{p(k)} \IpC (|U_{k}^\D \cap \fst(\s')|) $$
For all $k \in \{0, \dots , n \}$ and $\s \in U_{k}^\D$. And this will be satisfied if the following lemma is true

\begin{lemma} \label{lemma 6}
Given $\s: \D^k \to X'$ with $b_\s \in |X_m|-|X_{m-1}|$, then we have that there exists a PL stratified pseudomanifold $L \subset |X|$ such that

\begin{enumerate}
    \item $|\fst(\s)| \simeq |\lst(\s)| \simeq \mathbb{R}^m \times cL $
    \item $| \fst(\s) -_\D X_{m} | \simeq \mathbb{R}^m \times L $
\end{enumerate}
(Where the isomorphism $\simeq$ here corresponds to stratified homotopy equivalence)
\end{lemma}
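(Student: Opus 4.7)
The plan is to exploit the PL local conelike structure at stratum points. Since $b_\s$ lies in the open stratum $|X_m|-|X_{m-1}|$, the PL stratified pseudomanifold hypothesis on $|X'|$ provides a compact PL stratified pseudomanifold $L$ (the link of $b_\s$ as a point of its stratum) together with a distinguished neighborhood of $b_\s$ in $|X|$ stratified PL-homeomorphic to $\mathbb{R}^m \times cL$, with $b_\s$ sent to $(0,v)$ where $v$ is the cone vertex and $|X_m|$ corresponding locally to $\mathbb{R}^m \times \{v\}$. This $L$ will be the link claimed by the lemma. The remaining work splits into identifying $|\lst(\s)|$ and $|\fst(\s)|$ with this distinguished neighborhood up to stratified homotopy equivalence, and then verifying that removing $X_m$ corresponds combinatorially to removing the cone vertex.

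For part (1), I would first treat $|\lst(\s)|=|\st(b_\s)|$: as the open star of a vertex in the PL complex $X=Sd^2(X')$, it is geometrically an open cone with apex $b_\s$ over the combinatorial link of $b_\s$ in $X$, and since the $X_k$ are subcomplexes this cone inherits a stratification; by the PL pseudomanifold structure the cone realizes the model $\mathbb{R}^m\times cL$ up to stratified PL-homeomorphism. For $|\fst(\s)|=\bigcup_{\s\leq\tau}|\lst(\tau)|$ the inclusion $\lst(\s)\leq\fst(\s)$ gives $|\lst(\s)|\hookrightarrow|\fst(\s)|$, and I would show this inclusion is a stratified deformation retract: every simplex appearing in $\fst(\s)$ sits inside the same conelike neighborhood of $b_\s$, and a radial retraction toward the apex along the cone fibers carries $|\fst(\s)|$ back into $|\lst(\s)|$ while preserving strata, since the cone structure itself is stratified.

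For part (2), the stratum locus $|X_m|$ meets $\mathbb{R}^m\times cL$ exactly along $\mathbb{R}^m\times\{v\}$, so its complement is $\mathbb{R}^m\times(cL-\{v\})$, which deformation retracts onto $\mathbb{R}^m\times L$ in a stratified manner by sliding along the radial coordinate of $cL-\{v\}$. Combinatorially, $\fst(\s)-_\D X_m$ removes from $\fst(\s)$ the simplices whose barycentric chains touch $X_m$, i.e.\ those sitting on the cone spine. The double-subdivision hypothesis, together with the earlier properties of $-_\D$ on twice-subdivided complexes, is precisely what ensures that this combinatorial excision matches the genuine geometric removal of $|X_m|$ rather than an oversized ``aura'' around it; once that matching is verified, the radial retraction on the geometric side yields the desired equivalence $|\fst(\s)-_\D X_m|\simeq \mathbb{R}^m\times L$.

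The main obstacle is bridging the combinatorial operations ($\lst$, $\fst$, $-_\D$ inside $Sd^2(X')$) with the PL geometric picture (cone neighborhoods and stratum complements). The delicate step is making the stratified PL-homeomorphism $|\lst(\s)|\cong\mathbb{R}^m\times cL$ explicit: one must exhibit the star $|\st(b_\s)|$ as a cone on its link in $X$ and decompose that link, via the pseudomanifold hypothesis and the join structure, into the piece tangential to the stratum (contributing the $\mathbb{R}^m$ factor) and the transverse link $L$ (contributing $cL$). Once this conelike model is in place, the stratified deformation retractions needed for both parts of the lemma follow naturally along the cone directions.
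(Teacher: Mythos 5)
Your geometric picture is the right one, but as written the proposal defers exactly the two points that carry the weight of the proof, and the one mechanism you do specify is doubtful. First, the retraction $|\fst(\s)| \to |\lst(\s)|$ cannot be a single ``radial retraction toward the apex $b_\s$'': $\fst(\s)=\bigcup_{\s<\tau}\lst(\tau)$ is the union of the stars of the barycenters $b_\tau$ of \emph{all} cofaces of $\s$, so it is a chain of regions clustered around those barycenters; it is neither contained in the cone structure of $|\lst(\s)|$ nor star-shaped about $b_\s$, so there are no global ``cone fibers'' along which to slide. The paper instead collapses each $\lst(\tau)$ separately, by downward induction on $\dim\tau$, using the product decomposition $\lst(\tau)\simeq(\lst(\tau)\cap\tau)\times cL^\tau$ (obtained combinatorially via $\G(S_-(\tau)\times S_+(\tau))$) together with the disjointness and stratification lemmas (stars of distinct same-dimensional cofaces are disjoint, and $\lst(\tau)\cap\tau$ stays in its stratum), and this bookkeeping is what makes the contraction well defined and stratum-preserving. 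You would need an argument of comparable precision.

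Second, and more seriously, part (2) is exactly where you write ``once that matching is verified'': the operation $-_\D$ removes every simplex whose closure meets $X_m$, i.e.\ a whole neighborhood of the spine inside $\fst(\s)$, and the statement that after double subdivision this excision is stratified-homotopy equivalent to the genuine complement $|\fst(\s)|-|X_m|$ is not a formal consequence of the $Sd^2$ hypothesis — it is the main content of the paper's Section on step 5. There it is proved by writing $\lsts\simeq\G(S_-(\s)\times S_+(\s))$, identifying $\lsts-_\D\ims\simeq(\lsts\cap\s)\times L^\s$, analysing $L^\s$ as a union of twice-subdivided simplices, and then removing the finitely many simplices $\tau_1,\dots,\tau_l$ of $X_m$ containing $\s$ one at a time (lemma \ref{lemma 628} and the lemmas following it), checking at each stage that the homotopy type is unchanged and the stratification respected. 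Your proposal names this obstacle but supplies no argument for it, so the proof is incomplete at its decisive step. (The identification $|\lst(\s)|\simeq B^m\times cL$ is also only asserted; the paper gets it by shrinking a distinguished neighborhood inside $|\lst(\s)|$ and using pseudoradial projection and the Rourke--Sanderson star/link results, which is closer to a citable argument, but it too should be made explicit.)
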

Then the results follows from the cone formula.

\subsubsection{$\fst(\s)$ is a distinguish neighborhood}

We advocate this section to prove lemma \ref{lemma 6}. So during this section, we will suppose that $X'$ is a triangulation of a PL-stratified pseudomanifold with stratification $X_0' \leq \dots \leq X_n'$. We take $X=Sd^2(X')$ and call $X_k= Sd^2(X_k')$. Consider then $\s: \D^k \to X'$ with its barycenter $b_\s= \{ \{ \s \} \} \in |X_m|-|X_{m-1}|$. 

We use abuse of notation and call for a simplex $\eta \in X'$ the subcomplex $Sd^2(\Ima(\eta))= \eta$.

The proof of lemma \ref{lemma 6} will be divided in five parts

\begin{enumerate}
    \item $\lst(\s) \simeq (\lst(\s) \cap \s) \times cL^\s $ for some $L^\s$.
    \item $|\lst(\s) \cap \s| \simeq B^k$ corresponds to a ball of dimension $k$.
    \item $|\fst(\s)| \simeq | \lst(\s) | $
    \item $| \lst(\s) | \simeq B^m \times cL $
    \item $| \lst(\s) -_\D X_m | \simeq B^m \times L$
\end{enumerate}

\vspace{0.1 cm}

\textbf{1. $\lst(\s) \simeq (\lst(\s) \cap \s) \times L^\s $}

\vspace{0.2 cm} 

We start by noticing that for a simplex $\eta \in X'$, given that (the double subdivision of) $\Ima(\eta)$ is fat, then
$$ \st_{\Ima(\eta)} (b_\eta)= \eta \cap \lst(\eta)  $$
We will call $\st_{\Ima(\eta)}(b_\eta)=\st_\eta(b_\eta)$, let us call

\begin{itemize}
    \item $S(\eta)= \{ A \in Sd(X') | \eta \in A \}$ (that is, vertices of the shape $\{ \s_0, \dots, \s_n, \eta, \tau_0, \dots, \tau_l \}$)
    \item $S_-(\eta)=\{ A \in S(\eta) | \eta= \max(A) \} $ (that is, vertices of the shape $\{ \s_0, \dots, \s_n, \eta \}$)
    \item For $B \in S_-(\eta)$ we call $$S_+(B) = \{A \in S(\eta)| B \subset A \text{ and } \eta < \tau \text{ } \forall \eta \in B \text{ } \forall \tau \in A-B \}$$ 
    Again this will consist on vertices of the shape $\{ \s_0, \dots, \s_n, \eta, \tau_0, \dots, \tau_l \}$, but now with $\{ \s_0, \dots, \s_n, \eta \}$ fixed. We call $S_+(\eta)= S_+(\{ \eta \})$.
\end{itemize}

Observe that $S(\eta)= \{ A \cup B | A \in S_-(\eta) \text{ } B \in S_+(\eta) \}$.

We have that $\lst(\eta)$ and $\st_\eta(b_\eta)$ take their shape according to this sets of vertices. Recall that $\G$ was defined in \ref{Def G}.

\begin{lemma}
For a simplex $\eta \in X'$ we have that
\begin{itemize}
    \item $\lst(\eta)= \G(S(\eta))$
    \item $\st_\eta(b_\eta)= \G(S_-(\eta))$
\end{itemize}
\end{lemma}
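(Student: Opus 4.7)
The plan is to unpack both identities directly from the definitions, leveraging the fatness of $Sd^2(\Ima(\eta))$ from Proposition \ref{prop Sd} and the fact that in $X = Sd^2(X')$ the barycenter $b_\eta$ is the vertex $\{\eta\} \in V(X) = S(Sd(X'))$.

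For the first identity, I would start from $\lst(\eta) = \st(b_\eta) = \bigcup_{b_\eta < \tau} \Ima(\tau)$, which translates to: $\rho \in \lst(\eta)$ iff $\rho \cup \{b_\eta\}$ is a simplex of $X$, i.e., a chain in $Sd(X')$. Writing $\rho = \{A_0, \dots, A_p\}$ with $A_0 \subset \dots \subset A_p$, the question reduces to whether the singleton $\{\eta\}$ can be inserted into this chain, i.e., whether $\{\eta\}$ is comparable under inclusion with every $A_i$. Since $|\{\eta\}| = 1$ and the $A_i$ are non-empty, comparability forces either $A_i = \{\eta\}$ or $\{\eta\} \subset A_i$, both of which are equivalent to $\eta \in A_i$, i.e., $A_i \in S(\eta)$. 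Conversely, if every $A_i$ contains $\eta$, then $\{\eta\}$ sits at (or below) the minimum of the chain, so $\rho \cup \{\{\eta\}\}$ is still a simplex. This yields $\lst(\eta) = \G(S(\eta))$ by the characterization of $\G$ in Definition \ref{Def G}.

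For the second identity I would use the observation preceding the lemma, $\st_\eta(b_\eta) = \eta \cap \lst(\eta)$, where $\eta$ now denotes the fat subcomplex $Sd^2(\Ima(\eta))$ of $X$. Proposition \ref{prop Sd} (item 6) gives $Sd^2(\Ima(\eta)) = \G(Sd(\Ima(\eta)))$, so $\rho$ lies in $\eta$ precisely when every vertex $A_i$ of $\rho$ is a chain of faces of $\eta$, that is, $A_i \in Sd(\Ima(\eta))$. Intersecting with the description from the first identity, $\rho \in \eta \cap \lst(\eta)$ iff every $A_i$ is a chain of faces of $\eta$ containing $\eta$ as an element; since $\eta$ is maximal among its own faces, this forces $\eta = \max(A_i)$, i.e., $A_i \in S_-(\eta)$. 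Invoking the intersection property of $\G$ from Proposition \ref{prop G}, this reads $\eta \cap \lst(\eta) = \G(Sd(\Ima(\eta))) \cap \G(S(\eta)) = \G(Sd(\Ima(\eta)) \cap S(\eta)) = \G(S_-(\eta))$.

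The main obstacle is purely bookkeeping: one must track the layers of subdivision, be vigilant that $b_\eta$ is the 0-simplex $\{\eta\}$ of $Sd(X')$ regarded as a vertex of $X$, and never conflate set-theoretic containment $\eta \in A$ (between a simplex of $X'$ and a vertex of $X$) with the face relation $\s < \tau$ in $X$. Once this notational dust has settled, both identities reduce to a comparison of chains inside chains, and no deeper machinery beyond the $\G$-formalism already developed is required.
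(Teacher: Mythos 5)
Your proposal is correct and follows essentially the same route as the paper: your direct chain-insertion argument for $\lst(\eta)=\G(S(\eta))$ is just a merged form of the paper's two steps (showing $V(\st(b_\eta))=S(\eta)$ and then fatness of $\lst(\eta)$ via inserting $\{\eta\}$ below the minimum of the chain), and your second identity uses the same observation $\st_\eta(b_\eta)=\eta\cap\lst(\eta)$, the fatness $Sd^2(\Ima(\eta))=\G(Sd(\Ima(\eta)))$, and the computation $\G(S(\eta))\cap\G(Sd(\Ima(\eta)))=\G(S(\eta)\cap Sd(\Ima(\eta)))=\G(S_-(\eta))$ exactly as in the paper. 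No gaps.
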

\begin{proof}
We show first that 
$$V(\st(b_\eta))=S(\eta)$$
Given $A \in S(\eta)$, we have that $A \in \{ \{\eta \} , A \}$ which is a $1$-simplex of $\st(b_\eta)$, so $A \in V(\st(b_\eta))$. And given $\fset{\s} \in \st(b_\s)$ we have that there is $\tau \in X$ such that $\fset{\eta} \in \tau$ and $\{ \eta \} \in \tau$. Since $\{ \eta \}$ is minimal, we must have that $\{\eta \} \subset \fset{\eta}$.

Now we proove that $\lst(\eta)$ is fat. Consider $\fset{\Psi} \in X$ (so each $\Psi_k$ has a shape $\fset{\eta}$ and $\Psi_k \subset \Psi_{k+1}$ for all $k$) with $\Psi_k \in V(\lst(\eta))$ for all $k$, so $\Psi_0 \in S(\eta)$, in particular $\eta \in \Psi_0$, and then for $B= \{ \{ \eta \}, \Psi_0,\dots, \Psi_n \}$ we have that $\fset{\Psi} \subset B$ and $\{ \eta \} \in B$ and therefore $\fset{\Psi} \in \st(b_\eta)$

For the second statement, first consider that $\st_\eta(b_\eta)= \st(\eta) \cap Sd^2(\Ima(\tau)) = \G(S(\eta)) \cap \G(Sd(\Ima(\tau)))= \G(S(\eta) \cap Sd(\Ima(\tau)))$ by \ref{prop G}, so it will be enough to prove that
$$ S(\eta) \cap Sd(\Ima(\tau)) =S_-(\eta) $$
which comes from the fact that for all $\fset{\eta} \in Sd(\Ima(\tau))$ we have that $\eta_k \subset \eta$ for all $k$.
\end{proof}

Now, given $A= \{\s_0, \dots, \s_m, \s \} \in S_-(\s)$ we define
\begin{itemize}
    \item $L^\s = \G(S_+(\s)-\{ \{ \s \} \})$
    \item $L^A = \G( S_+(A)- \{ A \} ) $
\end{itemize}
We have the following lemma.
\begin{lemma}
In the situation above
\begin{itemize}
    \item $L^\s \simeq L^A$ for all $A \in S_-(\s)$
    \item $\G(S_+(\s)) = cL^\s$ and $\G(S_+(A)) = cL^A$
\end{itemize}
\end{lemma}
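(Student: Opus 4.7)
The plan is to prove the second statement first, since it pins down the combinatorial picture, and then to deduce the first statement by applying Lemma \ref{prop G2} to an explicit bijection.

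For $\G(S_+(\s)) = cL^\s$, the key observation is that $\{\s\}$ is the unique minimum of $S_+(\s)$ under inclusion: every other element has the shape $\{\s, \tau_0, \dots, \tau_l\}$ with $\s \subset \tau_0 \subset \dots \subset \tau_l$, and therefore strictly contains $\{\s\}$. Consequently, any simplex $\{\Psi_0, \dots, \Psi_n\} \in \G(S_+(\s))$ is by definition a chain $\Psi_0 \subset \dots \subset \Psi_n$ in $Sd(X')$ with all $\Psi_i \in S_+(\s)$, and since $\{\s\}$ is minimal it can occur in such a chain only as $\Psi_0$. Splitting chains according to whether $\{\s\}$ appears identifies the simplices of $\G(S_+(\s))$ with two disjoint families: those avoiding $\{\s\}$, which are exactly the simplices of $L^\s = \G(S_+(\s) - \{\{\s\}\})$; and those of the form $\{\{\s\}\} \cup \{\Psi_1, \dots, \Psi_n\}$ where $\{\Psi_1, \dots, \Psi_n\}$ is a (possibly empty) simplex of $L^\s$. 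This is precisely the simplicial cone $cL^\s$ with cone point $\{\s\}$. The same argument applied with $A$ in place of $\{\s\}$, using that $A$ is by construction the minimum of $S_+(A)$, yields $\G(S_+(A)) = cL^A$.

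For the first statement, define $\phi \colon S_+(\s) \to S_+(A)$ by $\phi(B) = \{\s_0, \dots, \s_m\} \cup B$. Writing $B = \{\s, \tau_0, \dots, \tau_l\}$ with $\s \subset \tau_0 \subset \dots \subset \tau_l$ one sees that $\phi(B) = \{\s_0, \dots, \s_m, \s, \tau_0, \dots, \tau_l\}$ is still a chain in $X'$, hence an element of $Sd(X')$, and it lies in $S_+(A)$ directly from the definition of the latter. The map $\phi$ is a bijection with inverse $B' \mapsto B' - \{\s_0, \dots, \s_m\}$ and sends $\{\s\} \mapsto A$, so it restricts to a bijection of vertex sets $V(L^\s) \to V(L^A)$. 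Since prepending or removing the fixed chain $\{\s_0, \dots, \s_m\}$ preserves strict inclusion of the $\Psi_i$'s, both $\phi$ and its inverse send chains to chains, i.e. simplices to simplices. Lemma \ref{prop G2} then upgrades $\phi$ to an isomorphism $L^\s \simeq L^A$.

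The only real subtlety — essentially pure bookkeeping — is checking that the prepending operation $B \mapsto \{\s_0, \dots, \s_m\} \cup B$ lands inside $Sd(X')$ and inside $S_+(A)$; but the composite chain $\s_0 \subset \dots \subset \s_m \subset \s \subset \tau_0 \subset \dots \subset \tau_l$ makes both facts immediate. No PL-geometric input is needed at this stage: the argument is pure combinatorics of $Sd^2(X')$. The topological content of Lemma \ref{lemma 6} enters only later, when these cone descriptions are assembled into the distinguished-neighbourhood product $B^m \times cL$.
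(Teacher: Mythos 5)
Your proposal is correct and follows essentially the same route as the paper: the isomorphism $L^\s \simeq L^A$ is given by the same vertex map that prepends the fixed chain $\{\s_0,\dots,\s_m\}$ (justified via Lemma \ref{prop G2}), and the cone identification rests on the same observation that the cone vertex ($\{\s\}$ resp.\ $A$) is the minimum of $S_+(\s)$ resp.\ $S_+(A)$, so every simplex extends by it. Your write-up merely swaps the order of the two claims and spells out the chain-splitting for the cone a bit more explicitly than the paper does.
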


\begin{proof}
For the first statement we use a morphism that in vertices will take $\{\s , \tau_0, \dots, \tau_l \} \mapsto \{\s_0, \dots, \s_m, \s , \tau_0, \dots, \tau_l \}$. It is easy to see that this defines a morphism and furthermore an isomorphism with inverse given in vertices by $ \{\s_0, \dots, \s_m, \s , \tau_0, \dots, \tau_l \} \mapsto \{\s , \tau_0, \dots, \tau_l \}$.

For the cone shape, observe that every simplex of $\G(S_+(A))$ has the shape $\fset{\Psi}$ with $A \subset \Psi_0$, so it is contained in $\{A \} \cup \{ \Psi_0, \dots, \Psi_n  \}$. The case for $cL^\s$ is analogous.
\end{proof}

So we want to prove that
$$\G(S(\s)) \simeq \G(S_-(\s)) \times \G(S_+(\s)) = \G(S_-(\s) \times S_+(\s)) $$
(With the second equality coming from \ref{prop G}),
and this is true because the bijection
$$ f: S_-(\s) \times S_+(\s) \to S(\s) $$
defined by $(A,B) \mapsto A \cup B$ defines an isomorphism
$$ f: \G(S_-(\s) \times S_+(\s)) \to \G(S(\s)) $$
We use \ref{prop G2} to prove this
\begin{lemma}\label{lemma 625}
In the situation above, the function $ f: S_-(\s) \times S_+(\s) \to S(\s) $ defines an isomorphism
$$ f: \G(S_-(\s) \times S_+(\s)) \to \G(S(\s)) $$
\end{lemma}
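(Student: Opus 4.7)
The plan is to apply Lemma \ref{prop G2}, so the work reduces to (i) exhibiting an explicit inverse $g$ of $f$ at the level of vertex sets, and (ii) verifying that both $f$ and $g$ are morphisms of simplicial complexes. For the inverse, given a chain $C \in S(\s) \subset Sd(X')$ containing $\s$, I would split $C$ at $\s$ and set $g(C) = (C^-, C^+)$ where $C^- = \{\tau \in C : \tau \subseteq \s\}$ and $C^+ = \{\tau \in C : \tau \supseteq \s\}$. Since both pieces contain $\s$ and $\s$ is the maximum of $C^-$ and the minimum of $C^+$, we have $C^- \in S_-(\s)$ and $C^+ \in S_+(\s)$, and clearly $C^- \cup C^+ = C$, while for $(A,B) \in S_-(\s) \times S_+(\s)$ the splitting of $A \cup B$ recovers $(A,B)$. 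So $f$ and $g$ are mutually inverse bijections on vertex sets.

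Next I would unravel the product structure. A simplex of $\G(S_-(\s) \times S_+(\s)) = \G(S_-(\s)) \times \G(S_+(\s))$ (using statement 9 of Proposition \ref{prop G}) is, by the definition of the product of ordered simplicial complexes, a sequence $\{(A_0,B_0), \ldots, (A_k,B_k)\}$ such that $A_0 \subseteq \cdots \subseteq A_k$ is a chain in $Sd(X')$ with each $A_i \in S_-(\s)$, and symmetrically $B_0 \subseteq \cdots \subseteq B_k$ with each $B_i \in S_+(\s)$. Its image under $f$ is $\{A_0 \cup B_0, \ldots, A_k \cup B_k\}$. Each $A_i \cup B_i$ is a chain in $Sd(X')$ (because $A_i$ ends at $\s$ and $B_i$ begins at $\s$, so their union is totally ordered by $\subseteq$) and contains $\s$, so lies in $S(\s)$; the sequence is monotone because each factor is, so it is itself a chain in $Sd(X')$, i.e., a simplex of $X = Sd^2(X')$. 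Thus $f$ is a morphism into $\G(S(\s))$. Symmetrically, for a simplex $\{C_0 \subsetneq \cdots \subsetneq C_k\}$ of $\G(S(\s))$, splitting each $C_i$ at $\s$ is monotone in both components, so $g$ sends it to a simplex of $\G(S_-(\s)) \times \G(S_+(\s))$. Lemma \ref{prop G2} then concludes.

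The only nontrivial step is bookkeeping the \emph{nested} subdivision structure: vertices of $X$ are already chains in $X'$, and simplices of $X$ are chains of such chains under $\subseteq$, so one must carefully verify that both the ordering condition in the product and the chain-of-chains condition in $X$ are preserved under the union and splitting operations. Once the compatibility of the inclusion ordering with the chosen product ordering is spelled out, each direction reduces to the observation that concatenating (resp.\ splitting) two chains that meet at the common endpoint $\s$ produces (resp.\ comes from) a genuine chain, which is the main conceptual content of the lemma.
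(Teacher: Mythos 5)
Your proposal is correct and follows essentially the same route as the paper: both invoke Lemma \ref{prop G2} and verify that $f$ and its inverse (splitting a chain $C \in S(\s)$ at $\s$) send simplices to simplices, using the explicit description of simplices of $\G(S_-(\s)) \times \G(S_+(\s))$ as componentwise-nested pairs of chains and of simplices of $\G(S(\s))$ as nested chains containing $\s$. Your extra verification of the vertex-level bijection is a point the paper asserts without proof just before the lemma, but the argument is the same.
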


\begin{proof}
We need to prove that $f$ and its inverse, say $g$, define morphisms, that is, they send simplices to simplices.

So first consider a simplex in $\G(S_-(\s) \times S_+(\s))$. This simplex will have the shape

$$ \{ (\{ \s_{0}^0, \dots, \s_{k_0}^0, \s \} , \{ \s, \tau_{0}^0, \dots, \tau_{l_0}^0 \} ), \dots , (\{ \s_{0}^m, \dots, \s_{k_m}^m, \s \} , \{ \s, \tau_{0}^m, \dots, \tau_{l_m}^m \} ) \}  $$
With the simplex $\{ \{ \s_{0}^0, \dots, \s_{k_0}^0, \s \}, \dots , \{ \s_{0}^m, \dots, \s_{k_m}^m, \s \} \}$ being in $\G(S_-(\s))$, and $\{  \{ \s, \tau_{0}^0, \dots, \tau_{l_0}^0 \} , \dots , \{ \s, \tau_{0}^m, \dots, \tau_{l_m}^m \} \}$ being in $\G( S_+(\s))$, that is 
$$ \{ \s_{0}^{i}, \dots, \s_{k_{i}}^{i}, \s \} \subset \{ \s_{0}^{i+1}, \dots, \s_{k_{i+1}}^{i+1}, \s \} $$
and 
$$\{ \s, \tau_{0}^{i}, \dots, \tau_{l_{i}}^{i} \} \subset \{ \s, \tau_{0}^{i+1}, \dots, \tau_{l_{i+1}}^{i+1} \}$$
for all $i$. When applying $f$ to this simplex we obtain 
$$ \{ \{ \s_{0}^0, \dots, \s_{k_0}^0, \s , \tau_{0}^0, \dots, \tau_{l_0}^0 \}, \dots , \{ \s_{0}^m, \dots, \s_{k_m}^m, \s, \tau_{0}^m, \dots, \tau_{l_m}^m \} \}  $$
Given the two inclusions before we obtain that 
$$\{ \s_{0}^i, \dots, \s_{k_i}^i, \s , \tau_{0}^i, \dots, \tau_{l_i}^i \} \subset \{ \s_{0}^{i+1}, \dots, \s_{k_{i+1}}^{i+1}, \s , \tau_{0}^{i+1}, \dots, \tau_{l_{i+1}}^{i+1} \}$$ 
for all $i$, and therefore it is a simplex of $\G(S(\s))$

On the opposite direction, consider a simplex on $\G(S(\s))$
$$ \{ \{ \s_{0}^0, \dots, \s_{k_0}^0, \s , \tau_{0}^0, \dots, \tau_{l_0}^0 \}, \dots , \{ \s_{0}^m, \dots, \s_{k_m}^m, \s, \tau_{0}^m, \dots, \tau_{l_m}^m \} \} $$
Applying $g$ will give us

$$ \{ (\{ \s_{0}^0, \dots, \s_{k_0}^0, \s \} , \{ \s, \tau_{0}^0, \dots, \tau_{l_0}^0 \} ), \dots , (\{ \s_{0}^m, \dots, \s_{k_m}^m, \s \} , \{ \s, \tau_{0}^m, \dots, \tau_{l_m}^m \} ) \}  $$

Now, since $\{ \s_{0}^i, \dots, \s_{k_i}^i, \s , \tau_{0}^i, \dots, \tau_{l_i}^i \} \subset \{ \s_{0}^{i+1}, \dots, \s_{k_{i+1}}^{i+1}, \s, \tau_{0}^{i+1}, \dots, \tau_{l_{i+1}}^{i+1} \}$ for all $i$, we will have that
$$ \{ \s_{0}^{i}, \dots, \s_{k_{i}}^{i}, \s \} \subset \{ \s_{0}^{i+1}, \dots, \s_{k_{i+1}}^{i+1}, \s \} $$
and 
$$\{ \s, \tau_{0}^{i}, \dots, \tau_{l_{i}}^{i} \} \subset \{ \s, \tau_{0}^{i+1}, \dots, \tau_{l_{i+1}}^{i+1} \}$$
for all $i$, giving us that the image by $g$ is a simplex of $\G(S_-(\s) \times S_+(\s))$
\end{proof}

\vspace{0.1 cm}

\textbf{2. $|\lst(\s) \cap \s| \simeq B^k$}

\vspace{0.2 cm}
We have that $\lst(\s) \cap \ims \subset \overset{\circ}{\s}$, which is a manifold. Since
$ \lst(\s) \cap \ims = \st_{\ims}(b_\s)$, we get that it is a ball by (\cite{RourkeSanderson} Corollary 2.20).

\vspace{0.3 cm}

\textbf{3. $|\fst(\s)| \simeq |\lst(\s)|$}

\vspace{0.2 cm}
First observe that what we have proved so far for $\s$ is also true for any simplex of $X'$.

We are going to inductively (on dimension) define a retraction from $\fst(\s)= \underset{\s < \tau}{\bigcup} \lst(\tau)$ onto $\lst(\s)$ y reducing each $\lst(\tau)$ for $\s < \tau$. Before starting with the retraction we make a couple of observations

\begin{lemma} \label{lemma 621}
In the setting above, we have
\begin{enumerate}
    \item For $\eta \in X'$ with $b_{\eta} \in |X_j|-|X_{j-1}|$ we have that $\lst(\eta) \cap \eta \leq X_j-_\D X_{j-1}$
    \item For $\tau, \tau' \in X'$ we have that $\lst(\tau) \cap \lst(\tau') \neq \emptyset \Leftrightarrow \tau < \tau' \text{ or } \tau' < \tau$
    \item If we have $\tau, \tau' \in \beta(\s)= \{ \tau \in X' | \s < \tau \}$ of the same dimension with $\tau \neq \tau'$ then $\lst(\tau) \cap \lst(\tau') = \emptyset$
\end{enumerate}
\end{lemma}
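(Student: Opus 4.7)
The plan is to dispatch the three parts in order; (3) will follow from (2) in a single line. The combinatorial fact that drives everything is that a simplex $\fset{\Psi}$ of $X = Sd^2(X')$ is a strict inclusion chain $\Psi_0 \subsetneq \cdots \subsetneq \Psi_n$ whose vertices $\Psi_i$ are themselves chains of simplices of $X'$, and by the lemma preceding this one $\fset{\Psi}$ lies in $\lst(\tau)$ if and only if $\tau \in \Psi_i$ for every $i$, equivalently (by minimality of $\Psi_0$) if and only if $\tau \in \Psi_0$.

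For (1), I would invoke the identification $\lst(\eta) \cap \eta = \st_\eta(b_\eta) = \G(S_-(\eta))$ from the preceding lemma, so that a simplex of $\lst(\eta) \cap \eta$ has the form $\fset{\Psi}$ with each $\Psi_i$ a chain of simplices of $X'$ whose maximum is $\eta$. The hypothesis $b_\eta \in |X_j| - |X_{j-1}|$ is equivalent to $\eta \in X_j' - X_{j-1}'$, so $\max \Psi_i = \eta \notin X_{j-1}'$ and hence $\Psi_i \notin Sd(X_{j-1}') = V(X_{j-1})$ for every $i$. Thus no vertex of $\fset{\Psi}$ lies in $V(X_{j-1})$, so $\fset{\Psi} \in X -_\D X_{j-1}$; combined with $\fset{\Psi} \in \eta \leq X_j$, this gives $\fset{\Psi} \in X_j -_\D X_{j-1}$.

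For (2), the $(\Leftarrow)$ direction: since $\tau$ and $\tau'$ are comparable under inclusion, the set $\{\tau, \tau'\}$ is a simplex of $Sd(X')$, and the corresponding $0$-simplex $\{\{\tau, \tau'\}\}$ of $Sd^2(X')$ lies in $\lst(\tau) \cap \lst(\tau')$ because both $\tau$ and $\tau'$ belong to its unique vertex $\{\tau, \tau'\}$. The $(\Rightarrow)$ direction: if $\fset{\Psi}$ is in the intersection, the dictionary above gives $\tau, \tau' \in \Psi_0$; but $\Psi_0$ is a strictly nested chain in $X'$, so any two of its elements are comparable under inclusion, forcing $\tau < \tau'$ or $\tau' < \tau$.

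Part (3) is then immediate: if $\lst(\tau) \cap \lst(\tau')$ were nonempty, (2) would make $\tau$ and $\tau'$ comparable as subsets of $V(X')$, but two simplices of the same dimension with one contained in the other must coincide, contradicting $\tau \neq \tau'$. The only real obstacle is bookkeeping, namely tracking the two levels of subdivision and translating between the abstract descriptions of $\lst$, $\fst$, $-_\D$ and the concrete picture of a simplex of $Sd^2(X')$ as a chain of chains. With the identity ``$\fset{\Psi} \in \lst(\tau) \Leftrightarrow \tau \in \Psi_0$'' in hand, each of the three statements becomes a short combinatorial check, and I do not anticipate any serious difficulty.
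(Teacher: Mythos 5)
Your proof is correct and follows essentially the same route as the paper: it reduces all three statements to the combinatorial descriptions $\lst(\tau)=\G(S(\tau))$ and $\lst(\eta)\cap\eta=\G(S_-(\eta))$ from the preceding lemma and then checks membership vertex-by-vertex, with (1) resting on $\eta\notin X_{j-1}'$, (2) on comparability of elements of a chain plus the explicit witness $\{\tau,\tau'\}$, and (3) following from (2). No gaps; the argument matches the paper's in substance.
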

\begin{proof}
For the first statement, consider $ \{ \Psi_0, \dots , \Psi_k \} \in \st_\eta(b_\eta) $, that is, $\Psi_i \subset \Psi_{i+1}$ for all $i$, and they have the shape 
$$\Psi_i= \{ \s_0^i, \dots, \s_{l_i}^i, \eta \}$$
with $\s_0^i \subset \dots \subset \s_{l_i}^i \subset \eta$. We want to show that $\{ \Psi_0, \dots , \Psi_k \} \in X_j-_\D X_{j-1}$, which is equivalent to prove that
$$\{ \Psi_0, \dots , \Psi_k \} \subset Sd(X_j')- Sd(X_{j-1}') $$
So we want that $\{ \s_0^i, \dots, \s_{l_i}^i, \eta \} \in Sd(X_j')- Sd(X_{j-1}')$ for all $i$. And for this it will be enough to prove that $\eta \in X_j-X_{j-1}$. Observe that $b_\eta \in |X_j|-|X_{j-1}|$ means that $ \{ \eta \} \in Sd(X_j')- Sd(X_{j-1}')$ which is equivalent to what we want.

For the second statement observe that $\lst(\tau) \cap \lst(\tau') = \G(S(\tau) \cap S(\tau'))$, so
$$\lst(\tau) \cap \lst(\tau')= \emptyset \Leftrightarrow S(\tau) \cap S(\tau') = \emptyset$$
Now, if we have $ A \in S(\tau) \cap S(\tau')$, we would have that both $\tau, \tau' \in A$, which means that $\tau < \tau'$ or $\tau' < \tau$. 
On the other direction, if $\tau \subset \tau'$ then we have $\{\tau, \tau' \} \in S(\tau) \cap S(\tau')$, and if $\tau' \subset \tau$ then $\{\tau', \tau \} \in S(\tau) \cap S(\tau')$.

The third statement is a direct consequence of the second, since $\tau, \tau'$ of the same dimension with $\tau \neq \tau'$ means that neither is face of the other.
\end{proof}

We need one more lemma before proceeding, we call for $\tau \in X'$ with $\s < \tau$
$$ \fst_\tau(\s)= \underset{\s \subset \eta \subsetneq \tau}{\bigcup}\lst(\eta)$$

\begin{lemma} \label{lemma 622}
In the setting before, for $\tau \in X'$ with $\s < X'$ we have that
\begin{itemize}
    \item $\fst_\tau(\s) \cap \lst(\tau) \leq \partial \lst(\tau)$.
    \item $|\fst_\tau(\s) \cap \lst(\tau) \cap \tau|$ is simply connected.
\end{itemize}
\end{lemma}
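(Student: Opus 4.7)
The plan is to handle the two assertions separately: the first is a combinatorial statement about vertex sets, while the second reduces to a PL argument about a regular neighborhood of a contractible subpolyhedron in a sphere.

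For the first assertion, by the description $\lst(\eta) = \G(S(\eta))$ and Proposition \ref{prop G}, we have $\lst(\eta) \cap \lst(\tau) = \G(S(\eta) \cap S(\tau))$. The vertex $b_\tau = \{\tau\}$ is the singleton chain, so membership $\{\tau\} \in S(\eta)$ would force $\eta \in \{\tau\}$, i.e.\ $\eta = \tau$, which is excluded by $\eta \subsetneq \tau$. Hence $b_\tau$ is not a vertex of $\lst(\eta) \cap \lst(\tau)$, and a fortiori no simplex of that intersection is incident to $b_\tau$. Interpreting $\partial \lst(\tau)$ as the link of $b_\tau$ in $Sd^2(X')$ (the subcomplex of $\lst(\tau)$ avoiding the apex), this yields $\lst(\eta) \cap \lst(\tau) \leq \partial \lst(\tau)$, and taking unions over $\s \subseteq \eta \subsetneq \tau$ proves the first bullet.

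For the second assertion, set $k = \dim \tau$. By part (1), $|\fst_\tau(\s) \cap \lst(\tau) \cap \tau|$ lies inside $|\partial \lst(\tau) \cap \tau|$, the topological link of $b_\tau$ in $|\tau|$; radial projection from $b_\tau$ identifies this link with $|\partial \tau| \cong S^{k-1}$. Under this homeomorphism, each $|\lst(\eta) \cap \lst(\tau) \cap \tau|$ for $\s \subseteq \eta \subsetneq \tau$ corresponds to a simplicial neighborhood of the barycenter $b_\eta \in |\partial \tau|$, and the union becomes a regular neighborhood of the subcomplex of $Sd(\partial \tau)$ whose vertices are $\{b_\eta \mid \s \subseteq \eta \subsetneq \tau\}$. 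This subcomplex realizes to the join $|\s| * |\partial \s'|$, where $\s'$ denotes the face of $\tau$ opposite $\s$; since $|\s|$ is contractible, the join is contractible. A regular neighborhood of a contractible subpolyhedron in a PL manifold deformation retracts onto it, so $|\fst_\tau(\s) \cap \lst(\tau) \cap \tau|$ is contractible, and in particular simply connected.

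The main obstacle is the rigorous regular-neighborhood identification in the second part: one has to verify that the stars $\lst(\eta) \cap \lst(\tau) \cap \tau$ assemble along radial projection without unwanted identifications. Lemma \ref{lemma 621}(2,3) governs exactly when two of these stars meet, so the nerve of the cover is the order complex of the poset $\{\eta \mid \s \subseteq \eta \subsetneq \tau\}$, which has minimum $\s$ and is therefore contractible; this gives a nerve-theorem shortcut if the direct neighborhood argument proves delicate. A fully combinatorial fallback is to imitate the product decomposition of Lemma \ref{lemma 625} inside $\fst_\tau(\s) \cap \lst(\tau) \cap \tau$ to exhibit it directly as contractible, bypassing PL regular-neighborhood theory altogether.
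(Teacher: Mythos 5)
Your first bullet is essentially the paper's own argument: the paper likewise writes $\lst(\tau)=\G(S(\tau))$ and $\partial \lst(\tau)=\G(S(\tau)-\{\{\tau\}\})$, and reduces everything to the observation that $\{\tau\}$ does not lie in $\bigcup_{\s \subset \eta \subsetneq \tau} S(\eta)$. For the second bullet you genuinely diverge. The paper stays combinatorial: it proves $\bigcup_{\eta}\G(S(\eta))=\G(\bigcup_{\eta}S(\eta))$ (using that for a chain $\Psi_0\subset\dots\subset\Psi_n$ of vertices, a simplex $\eta_0$ with $\Psi_0\in S(\eta_0)$ lies in every $\Psi_i$), concludes that $\fst_\tau(\s)\cap\lst(\tau)\cap\tau$ is a \emph{fat} subcomplex of $Sd^2(\Ima(\tau))$, and then invokes the principle that a fat subcomplex of a simply connected complex is simply connected. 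You instead work in the link of $b_\tau$, identify the union of the pieces $\lst(\eta)\cap\lst(\tau)\cap\tau$ with a star neighborhood of the full subcomplex of $Sd(\partial\tau)$ spanned by the barycenters $b_\eta$ with $\s\subseteq\eta\subsetneq\tau$, and conclude contractibility; this is a stronger conclusion than the lemma asks for, and it does not rely on the fatness principle, which as a general statement is delicate (fullness alone does not preserve simple connectivity), so your route is arguably more self-contained. Two details need tightening. First, that full subcomplex is not the join of $|\s|$ with the boundary of the opposite face: already for $\dim\s>0$ the dimensions disagree (the subcomplex is the order complex of the half-open interval $\{\eta \mid \s\subseteq\eta\subsetneq\tau\}$, of dimension $\dim\tau-\dim\s-1$, while the join has dimension $\dim\tau-1$). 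Fortunately all you need is contractibility, and the order complex is a cone with apex $b_\s$ because $\s$ is the minimum of that poset. Second, the nerve-theorem fallback needs more than the intersection pattern from Lemma \ref{lemma 621}: you must also check that each piece and each nonempty finite intersection of pieces is contractible. This is true and easy — the piece for a comparable family $\eta_{1}\subset\dots\subset\eta_{p}$ is the full subcomplex on the chains containing $\{\eta_1,\dots,\eta_p,\tau\}$, hence a cone on the vertex given by that chain — but it should be stated, since it is what makes the cover a good cover.
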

\begin{proof}
First note that we can state everything in terms of fat subcomplexes, since
\begin{itemize}
    \item $\lst(\tau)=\G(S(\tau))$
    \item $\partial \lst(\tau)=\G(S(\tau)- \{\{ \tau \}\} )$
    \item $\lst(\eta)= \G(S(\eta))$
\end{itemize}
Now, $\lst(\tau) \cap \underset{\s \subset \eta \subsetneq \tau}{\bigcup}\lst(\eta)= \G(S(\tau)) \cap \underset{\s \subset \eta \subsetneq \tau}{\bigcup}\G(S(\eta)) \leq \G(S(\tau)) \cap \G(\underset{\s \subset \eta \subsetneq \tau}{\bigcup}S(\eta))=\G(S(\tau) \cap \underset{\s \subset \eta \subsetneq \tau}{\bigcup}S(\eta))$, so to prove the first statement, it will suffice to show that
$$S(\tau) \cap \underset{\s \subset \eta \subsetneq \tau}{\bigcup}S(\eta) \subset S(\tau)- \{\{ \tau \}\} $$
And this is true since $\{ \tau \} \notin \underset{\s \subset \eta \subsetneq \tau}{\bigcup}S(\eta)$ (because the only set of size one of $S(\eta)$ is $\{ \eta \}$, and all $\eta$'s in the union are different from $\tau$)

Now we prove that in fact 
$$\underset{\s \subset \eta \subsetneq \tau}{\bigcup}\G(S(\eta)) = \G(\underset{\s \subset \eta \subsetneq \tau}{\bigcup}S(\eta)) $$
To see this consider $ \fset{\Psi} \in \G(\underset{\s \subset \eta \subsetneq \tau}{\bigcup}S(\eta))$, that is $\Psi_i \in \underset{\s \subset \eta \subsetneq \tau}{\bigcup}S(\eta)$ for all $i$.

So for all $i$ there is $\eta_i$ such that $\Psi_i \in S(\eta_i)$, that is $\eta_i \in \Psi_i$. Notice that since $\Psi_0 \subset \dots \subset \Psi_n$, we have that $\eta_0 \in \Psi_i $ for all $i$, which means that $\Psi_i \in S(\eta_0)$ for all $i$, so $\fset{\Psi} \in \G(S(\eta_0)) \subset \underset{\s \subset \eta \subsetneq \tau}{\bigcup}\G(S(\eta))$.

This means that $\fst_\tau(\s) \cap \lst(\tau) \cap \tau$ is a fat subcomplex of the simply conected simplicial complex $Sd^2(\Ima(\tau))$, which gives us that $|\fst_\tau(\s) \cap \lst(\tau) \cap \tau|$ is simply conected.
\end{proof}

We are now ready to make our proof

\begin{proposition}
In the setting given above, $|\fst(\s)|$ is homotopically stratified equivalent to $|\lst(\s)|$.
\end{proposition}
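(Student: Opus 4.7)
The plan is to construct an explicit stratified strong deformation retraction of $|\fst(\s)|$ onto $|\lst(\s)|$ by downward induction on the dimension of the surrounding simplex. Let $N = \max\{\dim \tau : \s < \tau,\, \tau \in X'\}$, and for each $k$ with $\dim \s < k \leq N$, set $\mathcal{A}_k = \{\tau \in X' : \s < \tau,\, \dim \tau = k\}$. Processing the indices in decreasing order $k = N, N-1, \ldots, \dim\s + 1$, at step $k$ I build a deformation $H_k$ that simultaneously retracts each $\lst(\tau)$ with $\tau \in \mathcal{A}_k$ onto $\lst(\tau) \cap \fst_\tau(\s)$, and is the identity outside $\bigcup_{\tau \in \mathcal{A}_k} \lst(\tau)$. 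The simultaneous retraction makes sense because by Lemma \ref{lemma 621}.3 these $\lst(\tau)$ are pairwise disjoint.

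For a fixed $\tau \in \mathcal{A}_k$ with $b_\tau \in |X_j| - |X_{j-1}|$, Step 1 of Lemma \ref{lemma 6} splits $|\lst(\tau)| \simeq |\lst(\tau) \cap \tau| \times |cL^\tau|$, and Step 2 identifies $|\lst(\tau) \cap \tau|$ with a PL ball $B^k$. By Lemma \ref{lemma 622} the subset $\lst(\tau) \cap \fst_\tau(\s)$ lies in $\partial \lst(\tau)$; under the product decomposition its ball factor $|\lst(\tau) \cap \fst_\tau(\s) \cap \tau|$ sits inside $\partial B^k$ and is simply connected. Using this simple-connectedness together with the fact that it is a fat subcomplex of a PL ball, I construct a PL deformation retraction of $B^k$ onto this subset and extend trivially over the cone factor. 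The resulting retraction is the identity on $\partial \lst(\tau)$, hence glues with the identity outside and with the simultaneous retractions for other $\tau' \in \mathcal{A}_k$.

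Composing $H_{\dim\s + 1} \circ \cdots \circ H_{N}$ then yields a strong deformation retraction of $|\fst(\s)|$ onto $|\lst(\s)|$. To verify the stratified character of the result, I invoke Lemma \ref{lemma 621}.1, which places $|\lst(\tau) \cap \tau|$ in the single stratum $|X_j| - |X_{j-1}|$; so the ball-factor retraction never crosses strata. The cone factor $|cL^\tau|$ inherits its stratification from the $X_i$'s, and since the retraction is the identity on this factor, the stratification is preserved throughout the homotopy.

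The main obstacle is the PL retraction on the ball: bare simple-connectedness of the target subcomplex is not in general enough to guarantee a PL deformation retraction of $B^k$ onto it, so the real work is extracting more structural information about $|\fst_\tau(\s) \cap \lst(\tau) \cap \tau|$ from the combinatorics of the double subdivision, which should in fact yield collapsibility (one expects this subcomplex to be a PL ball or cone in $\partial B^k$). A secondary subtlety is the compatibility of the $H_k$'s across inductive levels: after $H_{k+1}$ acts, the boundary pieces that previously belonged to $\lst(\tau')$ for $\tau' \in \mathcal{A}_{k+1}$ must be in a form on which $H_k$ extends continuously; this rests on the observation that for $\tau < \tau'$ the set $\fst_{\tau'}(\s) \cap \lst(\tau)$ also lies in $\partial \lst(\tau')$, which follows from the same argument as Lemma \ref{lemma 622}.
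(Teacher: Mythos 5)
Your proposal follows essentially the same route as the paper: downward induction on the dimension of the simplices $\tau > \s$, the product decomposition $|\lst(\tau)| \simeq |\lst(\tau)\cap\tau| \times |cL^\tau|$, retraction of the ball factor onto $\lst(\tau)\cap\fst_\tau(\s)\cap\tau$ extended by the identity on the cone factor, with Lemma \ref{lemma 621} giving disjointness and stratification-compatibility and Lemma \ref{lemma 622} placing the target in the boundary. The obstacle you flag (that simple-connectedness alone does not yield the PL deformation retraction of the ball onto that boundary subcomplex) is precisely the step the paper's own proof asserts without further justification, so your version is, if anything, more candid about where the remaining work lies.
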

\begin{proof}
We will contract each $\lst(\tau)$ with $\s < \tau$, inductively on the dimension of $\tau$ starting on dimension $n$ down to dimension $k+1$ (recall that $\s$ has dimension $k$)

First, for a $n$-simplex $\tau$, we have that $\lst(\tau)= \st_\tau(b_\tau)$ is an $n$-ball contained in $|X_n|-|X_{n-1}|$, so we can contract it to a point in a way that respects the stratification. In particular it is homotopically stratified equivalent to $\lst(\tau) \cap \fst_\tau(\s)$ (which is simply connected by lemma \ref{lemma 622}).

By lemma \ref{lemma 621}, we can apply the corresponding homotopy independently on each simplex of dimension $n$ containing $\s$.

Now, suppose we have reduced every simplex containing $\s$ of dimension grater than $j$, and consider $\tau$ of dimension $j$. Recall that $\lst(\tau) \simeq (\lst(\tau) \cap \tau) \times cL^\tau$. Since $\lst(\tau)\cap \tau$ is a ball, we can contract it to a simply connected subset of its border, as for example $\lst(\tau) \cap \fst_\tau(\s) \cap \tau$ (by lemma \ref{lemma 622}). We name this homotopy $h_\tau$. Again by lemma \ref{lemma 621}, this homotopy respects stratification. Consider now
$$ h_\tau \times Id: |\lst(\tau) \cap \tau| \times |cL^\tau| \times [0,1] \to |\lst(\tau) \cap \tau| \times |cL^\tau| $$
This will give an homotopy that contracts $\lst(\tau)$ to its border. By lemma \ref{lemma 621}, we can apply this homotopy independently in simplices of dimension $j$, and it will respect the stratification since $h_\tau$ does.
\end{proof}

\vspace{0.1 cm}

\textbf{4. $| \lst(\s) | \simeq B^m \times cL $}

\vspace{0.2 cm}
Now consider for the point $b_\s \in |X_m|-|X_{m-1}|$, a distinguish neighborhood $N_\s= B^m \times cL$, with $L$ a PL-stratified pseudomanifold of dimension $n-m$ compatible with the stratification. We can consider $N_\s$ small enough so that
$$ |N_\s| \subset |\lst(\s)| $$
We consider the simplicial complex $\lst(\s) \cap N_\s$. We can do a pseudoradial projection, and then make it into a simplicial complex (see the techniques throughout section \cite{RourkeSanderson} Chapter 2), obtaining a subdivision $K'$ of $\lst(\s)$ in which $\st_{K'}(b_\s)=N_\s$, then by (\cite{RourkeSanderson} Lemma 2.19), we have that $lk(b_\s,\lst(\s))$ is homeomorphic to $lk(b_s,K')$, and now we have that
$$N_\s = \st_{K'}(b_\s)=b_\s lk(b_s,K') \simeq b_\s lk(b_\s,\lst(\s))=\lst(\s) $$
Observe that since the pseudoradial projection goes along the simplices, we have that this homeomorphism respects stratification.

\vspace{0.1 cm}

\textbf{5. $| \lst(\s) -_\D X_m | \simeq B^m \times L$}

\vspace{0.2 cm}
Observe that the same homotopy that contracts $\fst(\s)$ into $\lst(\s)$ will contract $\fst(\s)-_\D X_m$ into $\lst(\s)-_\D X_m$, and therefore if we prove that $| \lst(\s) -_\D X_m | \simeq B^m \times L$, then we will be done proving lemma \ref{lemma 6}.

We will prove that $| \lst(\s) -_\D X_m | \simeq B^m \times L$ by showing that
$$| \lst(\s) -_\D X_m | \simeq | \lst(\s)| - |X_m |  $$
and then using that the latest is equivalent to $B^m \times L$, as it is a distinguished neighborhood. We make this proof in inductive steps. 

Observe first that
$ \lst(\s) -_\D X_m = \lst(\s) -_\D (\underset{\tau \in X_m'}{\bigcup} \Ima(\tau)) = \underset{\tau \in X_m'}{\bigcap} (\lst(\s) -_\D \Ima(\tau)) = \underset{\s < \tau \in X_m'}{\bigcap} (\lst(\s) -_\D \Ima(\tau)) =\lst(\s) -_\D( \underset{\s < \tau \in X_m'}{\bigcup} \Ima(\tau)) $. Furthermore, we can suppose that all these $\tau$'s in the union are $m$ dimensional. Observe that for $X$ locally finite, there are finitely many elements in $\{ \tau \in X'| \s < \tau \in X_m' \}$. We will enumerate them
$$\{ \tau \in X'| \s < \tau \in X_m' \}= \{\tau_1, \dots , \tau_l \}$$
For later convenience, we take $\tau_l$ to be $\s$ itself. Let us define the following.
\begin{definition}
Given $\{\tau_1, \dots , \tau_l \}$ as before, we call
$$ \mst{k} = \lsts -_\D (\Ima(\tau_1) \cup \dots \cup \Ima(\tau_k)) $$
And we set $\mst{0}=\lsts$
\end{definition}
Observe that by the ninth statement of \ref{prop -D}, we have that $$\mst{k+1}= \mst{k} -_\D \Ima(\tau_{k+1})$$
And now we will prove that we can make the proof inductively. 

\begin{lemma}\label{lemma 628}
In the setting given before, if we have that for all $k \geq 0$
$$ |\mst{k} -_\D \Ima(\tau_{k+1})| \simeq |\mst{k}| - |\Ima(\tau_{k+1})|$$
Then $| \lst(\s) -_\D X_m | \simeq | \lst(\s)| - |X_m |  $
\end{lemma}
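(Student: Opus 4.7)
The strategy is to prove, by induction on $k$, the stronger claim
\[|\mst{k}| \simeq |\lsts| - \bigcup_{i=1}^{k}|\Ima(\tau_i)|\]
for every $0 \le k \le l$. Since $\mst{l} = \lsts -_\D X_m$ and the union $\bigcup_{i=1}^{l}|\Ima(\tau_i)|$ captures exactly $|X_m| \cap |\lsts|$ (parts of $|X_m|$ outside $|\lsts|$ are irrelevant for the claim), the case $k = l$ delivers precisely $|\lsts -_\D X_m| \simeq |\lsts| - |X_m|$.

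The base case $k = 0$ is immediate, since $\mst{0} = \lsts$ and the empty union gives the whole space. For the inductive step, assuming the claim at level $k$, the hypothesis of the lemma applied at index $k$ supplies
\[|\mst{k+1}| = |\mst{k} -_\D \Ima(\tau_{k+1})| \simeq |\mst{k}| - |\Ima(\tau_{k+1})|,\]
and combining with the inductive hypothesis one expects
\[|\mst{k+1}| \simeq \Big(|\lsts| - \bigcup_{i=1}^{k} |\Ima(\tau_i)|\Big) - |\Ima(\tau_{k+1})| = |\lsts| - \bigcup_{i=1}^{k+1} |\Ima(\tau_i)|.\]

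The main obstacle is justifying the middle step, namely that restricting both sides of the equivalence $|\mst{k}| \simeq |\lsts| - \bigcup_{i=1}^k |\Ima(\tau_i)|$ by removing the further subspace $|\Ima(\tau_{k+1})|$ preserves the homotopy equivalence. I would address this by strengthening the induction: at each stage the equivalence is realised by a deformation retraction built from simplicial retractions that collapse a star-shaped neighborhood of $|\Ima(\tau_i)|$ in the ambient complex onto its boundary inside $|\mst{i}|$. Because these retractions are simplicial and respect the subcomplex structure, the preimage of $|\Ima(\tau_{k+1})| \cap |\mst{k}|$ under the retraction equals $|\Ima(\tau_{k+1})|$ intersected with the source, so removing this subspace from both domain and codomain preserves the equivalence. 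This is a stratified analogue of the elementary fact that a deformation retraction $r\colon X \to Y$ satisfying $r^{-1}(A \cap Y) = A$ restricts to $X - A \simeq Y - (A \cap Y)$, which is what one needs to chain the equivalences of the hypothesis into the required global statement.
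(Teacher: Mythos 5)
Your inductive skeleton is essentially the paper's: you peel off one $\tau_i$ at a time (building up from $\mst{0}$, where the paper unwinds from $\mst{l}$), and you correctly isolate the crux, namely that a bare homotopy equivalence cannot simply be restricted to the complement of a further subspace. The gap is in how you close this crux. You assert that at each stage the equivalence is realised by a deformation retraction built from simplicial retractions collapsing a neighborhood of $|\Ima(\tau_i)|$, satisfying the preimage condition $r^{-1}(A\cap Y)=A$. Nothing supplies this: the hypothesis of the lemma hands you only the \emph{existence} of homotopy equivalences $|\mst{k} -_\D \Ima(\tau_{k+1})| \simeq |\mst{k}| - |\Ima(\tau_{k+1})|$, and the equivalences actually produced later in the paper arise from the product decomposition $\lsts \simeq (\lsts\cap\s)\times cL^\s$ and a collapse of the cone coordinate, not from star collapses around $|\Ima(\tau_{k+1})|$; the compatibility you need is never verified. ``Strengthening the induction'' cannot manufacture it, because it is a property of the \emph{given} homotopies, i.e.\ a strengthening of the hypothesis of the lemma, which would then have to be checked at the point where the lemma is applied. (A further small inaccuracy: the elementary restriction fact you cite needs the whole homotopy, not just the retraction map, to avoid $A$ at intermediate times.)

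The paper closes the same gap by a different, assumption-free observation: the sets still to be deleted are realizations of simplices of dimension $<n$ sitting inside the $n$-dimensional star $|\lsts|$, so inclusions such as $|\mst{k+1}| - \bigcup_{j}|\Ima(\tau_j)| \hookrightarrow |\mst{k+1}|$ are themselves (stratified) homotopy equivalences, since one can retract off lower-dimensional simplices. With these inclusions being equivalences on both sides, the hypothesis equivalence $|\mst{k+1}| \simeq |\mst{k}| - |\Ima(\tau_{k+1})|$ transports to the further-restricted spaces with no compatibility condition on the given homotopies beyond respecting the stratification. To repair your argument, either adopt this codimension/retraction justification, or restate the lemma with the stronger hypothesis you actually use and verify that hypothesis for the concrete equivalences constructed afterwards.
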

\begin{proof}
The proof goes inductively as follows
\begin{equation*}
\begin{split}
|\lsts -_\D X_m | & = |\mst{l}| \\
 & \simeq |\mst{l-1}|-|\Ima(\tau_{l})| \\
 & \simeq (|\mst{l-2}|-|\Ima(\tau_{l-1})|)-|\Ima(\tau_{l})|\\
 & = |\mst{l-2}|-(|\Ima(\tau_{l-1})| \cup |\Ima(\tau_{l})|)\\
 & \simeq \dots \simeq |\lsts| - \underset{\s < \tau \in X_m'}{\bigcup} |\Ima(\tau)|
\end{split}
\end{equation*}
And $|\lsts| - \underset{\s < \tau \in X_m'}{\bigcup} |\Ima(\tau)| = |\lsts|-|X_m|$. The only tricky step here is to see that 
$$|\mst{k+1}|- \underset{j \geq k}{\bigcup}|\Ima(\tau_j)| \simeq (|\mst{k}|-|\Ima(\tau_{k+1})|)- \underset{j \geq k}{\bigcup}|\Ima(\tau_j)| $$
The homotopy equivalence here is constructed by restricting the corresponding homotopy of $|\mst{k} -_\D \Ima(\tau_{k+1})| \simeq |\mst{k}| - |\Ima(\tau_{k+1})|$. This can be done since the inclusions of subspaces
$$ |\mst{k+1}|- \underset{j \geq k}{\bigcup}|\Ima(\tau_j)| \hookrightarrow |\mst{k+1}| $$
Are homotopy equivalences. This is because subtracting $|\Ima(\tau_j)|$ corresponds to subtracting simplices of dimension $<n$ out of $\lsts$, hence we can retract $|\mst{k+1}|$ into $|\mst{k+1}|- \underset{j \geq k}{\bigcup}|\Ima(\tau_j)|$. And if the homotopy equivalence of $|\mst{k} -_\D \Ima(\tau_{k+1})| \simeq |\mst{k}| - |\Ima(\tau_{k+1})|$ respects the stratification, so will its restriction.
\end{proof}
We will treat $\s$ differently from the other simplices of $\{\tau_0, \dots , \tau_l \}$. We will also assume that each of these other simplices has dimension $m$. We will make use of the following lemma

\begin{lemma}
In the setting given before
$$\lst(\s) -_\D \ims \simeq (\lsts \cap \s) \times L^\s$$
\end{lemma}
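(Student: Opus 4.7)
The plan is to reduce everything to the $\G$-formalism and then restrict the isomorphism of Lemma~\ref{lemma 625}.

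First, I would write both sides in the form $\G(\cdot)$. Since $\lst(\s) = \G(S(\s))$, and iterating Proposition~\ref{prop Sd}(6) gives $\ims = Sd^2(\Ima(\s)) = \G(Sd(\Ima(\s)))$, Proposition~\ref{prop G}(10) yields
$$ \lst(\s) -_\D \ims = \G(S(\s) - Sd(\Ima(\s))). $$
A chain belongs simultaneously to $S(\s)$ and $Sd(\Ima(\s))$ exactly when it contains $\s$ and all of its elements lie in $\Ima(\s)$, which forces $\s$ to be its maximum; hence $S(\s) \cap Sd(\Ima(\s)) = S_-(\s)$ and $\lst(\s) -_\D \ims = \G(S(\s) - S_-(\s))$.

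Next, I would restrict the bijection $f: S_-(\s) \times S_+(\s) \to S(\s)$, $(A, B) \mapsto A \cup B$, of Lemma~\ref{lemma 625}. For $(A, B)$ in the domain, the union $A \cup B$ lies in $S_-(\s)$ iff $\s$ is the maximum of this chain, iff $B = \{\s\}$. Thus $f$ restricts to a bijection $S_-(\s) \times (S_+(\s) - \{\{\s\}\}) \to S(\s) - S_-(\s)$. Since $f$ is already a simplicial isomorphism on $\G(S_-(\s) \times S_+(\s))$ and the two subcomplexes of interest are the full subcomplexes of their ambient complexes on the corresponding vertex sets, the restriction is itself a simplicial isomorphism
$$ f: \G(S_-(\s) \times (S_+(\s) - \{\{\s\}\})) \overset{\simeq}{\longrightarrow} \G(S(\s) - S_-(\s)) = \lst(\s) -_\D \ims, $$
and if desired this can be double-checked by the chain-inclusion argument of Lemma~\ref{lemma 625} together with Lemma~\ref{prop G2}.

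Finally, Proposition~\ref{prop G}(9) gives
$$ \G(S_-(\s) \times (S_+(\s) - \{\{\s\}\})) = \G(S_-(\s)) \times \G(S_+(\s) - \{\{\s\}\}) = (\lsts \cap \s) \times L^\s, $$
and composing the two isomorphisms yields the claim. The only places requiring attention are the identification $S(\s) \cap Sd(\Ima(\s)) = S_-(\s)$ and the check that $f$ carries the subcomplex on the product side exactly onto $\G(S(\s) - S_-(\s))$; both are immediate bookkeeping once the $\G$-formalism is in place, so I do not expect a genuine obstacle beyond reusing the structure already established in Lemma~\ref{lemma 625}.
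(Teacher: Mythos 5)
Your proposal is correct and follows essentially the same route as the paper: both rest on the product isomorphism of Lemma \ref{lemma 625} together with the observation that the vertices removed by $-_\D\,\ims$ are exactly those of $S_-(\s)$, i.e.\ the pairs $(A,\{\s\})$, leaving $\G(S_-(\s)\times(S_+(\s)-\{\{\s\}\}))=(\lsts\cap\s)\times L^\s$. Your version merely makes the paper's terse bookkeeping explicit (via Proposition \ref{prop Sd}(6), Proposition \ref{prop G}(9)--(10), and the restriction of the bijection $f$), which is a harmless elaboration rather than a different argument.
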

\begin{proof}
Recall by lemma \ref{lemma 625} that
$$\lsts \simeq \G(S_-(\s)\times S_+(\s))$$
via the identification
$$\{\s_0, \dots, \s_m, \s, \tau_0, \dots, \tau_l \} \mapsto (\{\s_0, \dots, \s_m, \s \}, \{ \s, \tau_0, \dots, \tau_l \}  )$$
When removing $\ims$, we are subtracting all vertices of the shape $\{\s_0, \dots, \s_m, \s \}$, that is, all $(\{\s_0, \dots, \s_m, \s \}, \{ \s \})$, and then
$$\lsts -_\D \ims \simeq \G(S_-(\s) \times (S_+(\s)-\{ \{ \s \} \})) = B_\s \times L^\s$$
\end{proof}
And then we have

\begin{lemma} \label{lemma 6210}
In the setting given before
$$|\lsts|-|\ims| \simeq |\lst(\s) -_\D \ims| $$
\end{lemma}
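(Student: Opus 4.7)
The plan is to reduce both sides to products via Lemma \ref{lemma 625} and the previous lemma, then invoke the elementary deformation retract of a punctured cone onto its base. First I would combine the two product decompositions
$$ |\lsts| \cong |\st_\s(b_\s)| \times |cL^\s|, \qquad |\lsts -_\D \ims| \cong |\st_\s(b_\s)| \times |L^\s|, $$
coming from Lemma \ref{lemma 625} and the previous lemma respectively. The first of these will host $|\ims|$ as a concrete subspace, and the second will be recognised as the target of the retraction.

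Next I would locate $|\ims|$ inside $|\lsts|$. Since $\ims$ and $\lsts$ are both subcomplexes of $X$, one has $|\ims|\cap|\lsts|=|\ims\cap\lsts|$. A vertex $A \in S(\s)$ of $\lsts$ belongs to $\ims = Sd^2(\Ima(\s))$ precisely when every element of $A$ is a face of $\s$ in $X'$, i.e.\ when $A \in S_-(\s)$; under the product bijection $S_-(\s)\times S_+(\s) \to S(\s)$ of Lemma \ref{lemma 625}, these are exactly the pairs $(A,\{\s\})$. Extending to simplices, $\ims\cap\lsts$ corresponds to $\G(S_-(\s)\times\{\{\s\}\}) = \st_\s(b_\s)\times\{c\}$, where $c=\{\s\}$ is the cone point of $cL^\s$. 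Therefore
$$ |\lsts|-|\ims| \;=\; |\st_\s(b_\s)| \times \bigl(|cL^\s|-\{c\}\bigr). $$

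To finish, I would use that $|cL^\s|$ deformation retracts onto its base $|L^\s|$ after puncturing the apex, by the standard radial retraction along cone rays. Crossing this with the identity on $|\st_\s(b_\s)|$ yields a deformation retraction $|\lsts|-|\ims| \to |\st_\s(b_\s)|\times|L^\s| = |\lsts -_\D \ims|$, inverse up to homotopy to the natural inclusion $|\lsts-_\D\ims| \hookrightarrow |\lsts|-|\ims|$. As the retraction is radial along cone rays and trivial on the first factor, it respects the stratified structure (the cone point sits in a lower stratum and is moved away from radially), giving a stratified homotopy equivalence.

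The main obstacle is the bookkeeping that identifies $\ims\cap\lsts$ with the slice $\st_\s(b_\s)\times\{c\}$ inside the product; once this is accomplished, everything reduces to the elementary fact that a cone with its apex removed deformation-retracts onto its base.
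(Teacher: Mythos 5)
Your proposal is correct and follows essentially the same route as the paper: both use the product decomposition $|\lsts| \cong |B_\s| \times |cL^\s|$ from Lemma \ref{lemma 625}, identify the part of $|\ims|$ lying in $|\lsts|$ with the cone-point slice $|B_\s| \times \{c\}$ (exactly as in the preceding lemma on $\lsts -_\D \ims$), and then retract the punctured cone onto its base $|L^\s| $ to land on $|\lsts -_\D \ims| \cong |B_\s| \times |L^\s|$. Your version just spells out the bookkeeping of $\ims \cap \lsts = \G(S_-(\s))$ more explicitly than the paper does.
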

\begin{proof}
The proof goes as follows
\begin{equation*}
\begin{split}
|\lsts|-|\ims| & = |B_\s \times cL^\s|-|\ims| \\
 & = (|B_\s| \times |cL^\s|)-|\ims| \\
 & = |B_\s| \times (0,1) \times |L^\s| \\
 & \simeq |B_\s| \times |L^\s| = |B_\s \times L^\s| = |\lst(\s) -_\D \ims|
\end{split}
\end{equation*}
\end{proof}

We will now subtract the rest of the simplices of $X_m$, but before start doing it we will take a closer look on $L^\s$, suppose first that we are working in a building block, that is, $X= \Dn$

\begin{lemma}
Suppose that $X= \Dn$ and $\s= (0, \dots, k)$, then
$$L^\s \simeq Sd^2(\D^{n-k-1})$$
Where $\D^{n-k-1}$ is formed by the vertices of the simplex $(k+1, \dots, k+(n-k))$
\end{lemma}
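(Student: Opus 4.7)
My plan is to exhibit an explicit simplicial isomorphism via Lemma \ref{prop G2}. Both $L^\s$ and $Sd^2(\D^{n-k-1})$ are fat subcomplexes of their respective ambient complexes (the former by construction, the latter by Proposition \ref{prop Sd}(6)), so it is enough to produce a bijection of vertex sets that extends to a morphism in both directions.

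The first step is to unpack the two vertex sets. Since $X' = \D^n$, a vertex of $L^\s$ is by definition an element of $S_+(\s) - \{\{\s\}\}$, which takes the form $A = \{\s, \tau_0, \dots, \tau_l\}$ where $\s \subsetneq \tau_0 \subsetneq \cdots \subsetneq \tau_l$ is a strict chain of simplices of $\D^n$ and $l \geq 0$. Because $\s = \{0,\dots,k\}$, each $\tau_i$ is uniquely of the form $\tau_i = \s \cup \tau_i'$ for a nonempty $\tau_i' \subset \{k+1,\dots,n\}$, and the chain condition on the $\tau_i$ translates verbatim to $\tau_0' \subsetneq \cdots \subsetneq \tau_l'$. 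On the other side, a vertex of $Sd^2(\D^{n-k-1})$ is precisely a strict chain $\{\rho_0,\dots,\rho_l\}$ of nonempty subsets of $\{k+1,\dots,n\}$, so the two vertex sets are in obvious bijection.

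Concretely, I would define
$$ f(\{\s, \tau_0, \dots, \tau_l\}) = \{\tau_0 - \s, \dots, \tau_l - \s\}, \qquad g(\{\rho_0, \dots, \rho_l\}) = \{\s, \s \cup \rho_0, \dots, \s \cup \rho_l\}$$
as mutually inverse bijections on vertex sets. To invoke Lemma \ref{prop G2} I then need each to extend to a simplicial morphism. A simplex of $L^\s$ is a chain $A_0 \subsetneq \cdots \subsetneq A_m$ of such vertices, all of which contain $\s$; since the correspondence $\tau \leftrightarrow \tau - \s$ is an order-preserving bijection between simplices of $\D^n$ strictly containing $\s$ and nonempty subsets of $\{k+1,\dots,n\}$, stripping $\s$ from every element of each $A_j$ preserves strict inclusion, so $\{f(A_0), \dots, f(A_m)\}$ is a chain in $Sd(\D^{n-k-1})$, hence a simplex of $Sd^2(\D^{n-k-1})$. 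The check for $g$ is the same argument run backwards.

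I do not expect a genuine obstacle: the argument is essentially bookkeeping once the two vertex-set descriptions are correctly aligned. The only subtle point is respecting the strict-chain condition built into the definition of $S_+(\s)$, which is exactly what guarantees that the operation $\tau \mapsto \tau - \s$ (and its inverse $\rho \mapsto \s \cup \rho$) is well-defined, injective, and order-preserving on the simplices that actually appear.
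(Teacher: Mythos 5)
Your proposal is correct and follows essentially the same route as the paper: a vertex-level bijection $\rho \leftrightarrow \s \cup \rho$ between chains of faces of $(k+1,\dots,n)$ and elements of $S_+(\s)-\{\{\s\}\}$, extended to simplices and checked to be a morphism in both directions, then concluded via Lemma \ref{prop G2} using fatness. Your version is in fact slightly more careful than the paper's, since you keep $\s$ itself as an element of each vertex chain (as the definition of $S_+(\s)$ requires) and you spell out the order-preservation check that the paper leaves as ``easily seen.''
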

\begin{proof}
Recall that $L^\s= \G(S(\s)-\{\{\s\}\})$

We establish a bijection between $S(\s)-\{\{\s\}\}$ and $Sd(Im((k+1, \dots,n)))$. To a face $(a_{j_0}, \dots, a_{j_m})<(k+1, \dots,n)$ we assign 
$$f((a_{j_0}, \dots, a_{j_m}))=(0, \dots, k, a_{j_0}, \dots, a_{j_m})$$
Which naturally defines a function
$$f: Sd(Im((k+1, \dots,n))) \to S(\s)-\{\{\s\}\} $$
Given by $f(\{ \eta_0, \dots, \eta_j \})= \{ f(\eta_0), \dots, f(\eta_j) \}$. This is easily seen to be a function and define a morphism
$$f: \G(Sd(Im((k+1, \dots,n)))) \to \G(S(\s)-\{\{\s\}\}) $$
Furthermore, it has an inverse defined in the same fashion $$g((0, \dots, k, a_{j_0}, \dots, a_{j_m}))=(a_{j_0}, \dots, a_{j_m})$$
\end{proof}

And then, in the general case, $L^\s$ is the union of these pieces.

\begin{lemma}
In the setting given before, let $\{ \eta_i \}_{i \in I}$ be the set of $n$-simplices of $X$, then
$$ L^\s= \underset{i \in I}{\bigcup}L^\s_{\eta_i}$$
where $L^\s_{\eta_i}$ is the construction of $L^\s$ assign to the subspace $|\Ima(\eta_i)|$.
\end{lemma}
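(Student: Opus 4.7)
The plan is to prove both inclusions directly from the description of $L^\s$ as $\G(S_+(\s) - \{\{\s\}\})$ and the analogous description of $L^\s_{\eta_i}$ as $\G(S_+(\s,\eta_i) - \{\{\s\}\})$, where $S_+(\s,\eta_i)$ denotes the $S_+(\s)$ set computed inside the subcomplex $Sd(\Ima(\eta_i))$ rather than inside $Sd(X')$.

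First, I would establish the easy inclusion $\bigcup_i L^\s_{\eta_i} \leq L^\s$. Since each $\Ima(\eta_i)$ is a subcomplex of $X'$, every element of $S_+(\s,\eta_i)-\{\{\s\}\}$ is also an element of $S_+(\s)-\{\{\s\}\}$ (the conditions $\s \in A$ and $\s \subsetneq \tau$ for $\tau \in A-\{\s\}$ are absolute), so the monotonicity of $\G$ (statement 3 of Proposition \ref{prop G}) gives $L^\s_{\eta_i} \leq L^\s$ for every $i$.

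For the reverse inclusion, I would take a simplex $\{A_0, \dots, A_m\} \in L^\s$ with $A_0 \subsetneq \dots \subsetneq A_m$, and concentrate on the top vertex $A_m$. Writing it as $A_m = \{\s, \tau_0, \dots, \tau_l\}$ with $\s \subsetneq \tau_0 \subsetneq \dots \subsetneq \tau_l$ (possible since $A_m \in S_+(\s)$), I would use the pseudomanifold hypothesis on $X'$ to select an $n$-simplex $\eta_{i_0}$ of which $\tau_l$ is a face; this is where the density of the regular stratum $|X|-|X_{n-2}|$ enters, forcing every simplex of $X'$ to be a face of some top-dimensional one. Then every $\tau_k$ as well as $\s$ is a face of $\eta_{i_0}$, so $A_m \in S_+(\s,\eta_{i_0})-\{\{\s\}\}$. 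Since each $A_j \subseteq A_m$ is again a chain containing $\s$ whose other elements are faces of $\tau_l \subseteq \eta_{i_0}$ strictly containing $\s$, we get $A_j \in S_+(\s,\eta_{i_0})-\{\{\s\}\}$ for all $j$, and the whole simplex $\{A_0,\dots,A_m\}$ lies in $L^\s_{\eta_{i_0}}$.

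The main obstacle I anticipate is justifying that every simplex of $X'$ is a face of some $n$-simplex. This is the combinatorial translation of the density requirement in the pseudomanifold definition, and while intuitively clear, it needs to be invoked cleanly (one can either appeal to the definition of a PL stratified pseudomanifold directly, or derive it by noting that simplices not contained in any $n$-simplex would produce open points of $|X|$ outside the closure of $|X|-|X_{n-2}|$). Once this fact is in place, the argument reduces to a straightforward diagram-chase through the definitions of $S_+(\s)$, $\G$ and the building-block $L^\s_{\eta_i}$.
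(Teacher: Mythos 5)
Your proof is correct and follows essentially the same route as the paper: the paper's one-line argument writes $L^\s=\bigcup_i L^\s\cap \eta_i=\bigcup_i \G(\{A\in S_+(\s)\mid \max(A)<\eta_i\}-\{\{\s\}\})$, and your two inclusions are exactly the unfolding of that identity in terms of the vertex sets $S_+(\s)$. The only difference is that you make explicit the purity fact (every simplex of $X'$ is a face of an $n$-simplex, forced by density of the regular stratum), which the paper uses implicitly when claiming $L^\s=\bigcup_i L^\s\cap\eta_i$; that is a reasonable point to spell out.
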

\begin{proof}
The proof is simply the fact that
$$L^\s= \underset{i \in I}{\bigcup}L^\s \cap \eta_i= \underset{i \in I}{\bigcup} \G(\{A \in S_+(\s)|max(A)<\eta_i \}- \{\{\s\}\})$$
\end{proof}

Now observe that in the double subdivision, when taking off faces of some $\D^m$ with either $-$ or $-_\D$, the homotopy type does not change

\begin{lemma}
Consider $Z \leq \partial \Dm$ a simplicial complex formed by proper faces of an $m$-simplex. Then
$$|\Dm| = |Sd^2(\Dm)| \simeq |Sd^2(\Dm) -_\D Z | \simeq |Sd^2(\Dm)| - |Z|$$
Furthermore, if $X$ is a union of $m$-simplices and $Z \leq \partial X$, then
$$|X| = |Sd^2(X)| \simeq |Sd^2(X) -_\D Z | \simeq |Sd^2(X)| - |Z|$$
\end{lemma}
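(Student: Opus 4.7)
The plan is to deduce both homotopy equivalences from a single geometric identification. A simplex $\sigma \in Sd^2(\Dm)$ lies in $Sd^2(\Dm) -_\D Z$ precisely when no vertex of $\sigma$ is a vertex of $Sd^2(Z)$, so $|Sd^2(\Dm) -_\D Z|$ is the simplicial \emph{antistar} of $V(Sd^2(Z))$, that is, the complement in $|Sd^2(\Dm)|$ of the open star $N^o$ of $V(Sd^2(Z))$. The crucial geometric fact is that $N^o$ is an open regular neighborhood of $|Z|$ in $|\Dm|$; this is the standard simplicial neighborhood theorem for the derived (second) subdivision, as treated in Rourke--Sanderson, Chapter 3. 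Working with only one subdivision would fail here, which is precisely why $Sd^2$ is imposed throughout the section.

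Granted this identification, the second equivalence $|Sd^2(\Dm) -_\D Z| \simeq |Sd^2(\Dm)| - |Z|$ comes from a collar argument: the closed regular neighborhood $\overline{N^o}$ is a mapping cylinder over $|Z|$, so the punctured collar $\overline{N^o} - |Z|$ deformation retracts onto its outer frontier, which sits inside $|Sd^2(\Dm) -_\D Z|$. Extending this retraction by the identity on the antistar gives a deformation retraction of $|Sd^2(\Dm)| - |Z|$ onto $|Sd^2(\Dm) -_\D Z|$.

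For the remaining equivalence $|\Dm| \simeq |\Dm| - |Z|$ (noting $|Sd^2(\Dm)| = |\Dm|$ as point sets), the straight-line homotopy $H_t(x) = (1-t)x + tb$ toward the barycenter $b$ of $|\Dm|$ does the job. Convexity keeps $H_t(x)$ in $|\Dm|$; for $t > 0$ the point $H_t(x)$ lies in the interior, since $b$ does, hence outside $|Z| \subseteq |\partial \Dm|$; and $H_0(x) = x \notin |Z|$ by hypothesis. Thus $|\Dm| - |Z|$ contracts to $\{b\}$, and combined with the contractibility of $|\Dm|$ all three spaces are contractible and mutually homotopy equivalent.

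For the generalisation to a union $X$ of $m$-simplices with $Z \leq \partial X$, the first two paragraphs apply with $\Dm$ replaced by $X$, since the antistar and regular neighborhood identification is local and the deformation retractions glue consistently across shared boundary faces. The final step $|X| \simeq |X| - |Z|$ is replaced by a PL collar argument: since $|X|$ is a PL pseudomanifold of dimension $m$ and $Z$ lies in its boundary, a collar of $|\partial X|$ in $|X|$ (standard PL technology, Rourke--Sanderson) lets us push $|X|$ off any boundary subcomplex. The principal obstacle, both here and in the single-simplex case, is the regular neighborhood identification of the first paragraph; this is exactly the point at which double subdivision rather than single subdivision becomes essential, and where the careful PL background developed earlier in the paper must be invoked.
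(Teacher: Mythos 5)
Your core argument is correct and takes a genuinely different route from the paper. The paper argues locally and inductively: for each simplex $\eta$ it proves $|\lst(\eta) -_\D \Ima(\eta)| \simeq |\lst(\eta)| - |\Ima(\eta)|$ from the product--cone structure $\lst(\eta) \simeq (\lst(\eta)\cap\eta)\times cL^{\eta}$ (as in Lemma \ref{lemma 6210}), and then iterates these homotopies over the simplices of $Z$ by increasing dimension, with the displayed decomposition over the $\lst(\tau)$'s to justify the iteration in the union case. You instead make one global move: $Sd^2(\Dm) -_\D Z$ is the antistar of $Sd^2(Z)$, and $|Sd^2(\Dm)|-|Z|$ deformation retracts onto it by the derived-neighborhood (mapping-cylinder) theory of Rourke--Sanderson. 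That buys a canonical one-step deformation retraction and avoids the paper's bookkeeping. Two small remarks: what your retraction really needs is only that $Sd^2(Z)$ is \emph{full} (fat) in $Sd^2(\Dm)$, so that the join-line retraction off a full subcomplex applies; by Proposition \ref{prop Sd} fullness already holds after a single subdivision, so the double subdivision is not what rescues this particular step (it is needed elsewhere, e.g.\ for the topology $\T$). Your star-shaped contraction of $|\Dm|-|Z|$ to the barycenter is a clean replacement for the paper's ``obvious since $|Z|\subset\partial|\Dm|$''.

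The one soft spot is the last step of the union case. A ``union of $m$-simplices'' with $Z\leq\partial X$ is neither a PL manifold with boundary nor, in general, a pseudomanifold with collared boundary, so the collar you invoke does not follow from the stated hypotheses; with the natural reading of $\partial X$, the equivalence $|X|\simeq|X|-|Z|$ can even fail at this level of generality (two $2$-simplices meeting only at a vertex $v$, $Z=\Ima(v)$: removing $v$ disconnects $|X|$). Note that the paper's own proof of the union case only establishes the comparison $|Sd^2(X) -_\D Z|\simeq |Sd^2(X)|-|Z|$, which is also the only part used afterwards (for $X=L^{\s}$, a link, where the extra structure you want is actually available). So either confine the ``furthermore'' claim to that comparison --- which your antistar argument does prove in full generality, since fullness and the join-line retraction are insensitive to how the $m$-simplices are assembled --- or make explicit the additional hypotheses on $X$ (pseudomanifold with collared boundary, as holds for $L^{\s}$) under which your collar argument is legitimate.
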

\begin{proof}
We use that in general, for $\eta \in \Dm$ we have
$$|\lst(\eta) -_\D \Ima(\eta)| \simeq |\lst(\eta)| - |\Ima(\eta)| $$
which is proven in the same fashion as lemma \ref{lemma 6210}
\begin{equation*}
\begin{split}
|\lst(\eta)|-|\Ima(\eta)| & = |B_\eta \times cL^\eta|-|\Ima(\eta)| \\
 & = |B_\eta| \times (0,1) \times |L^\eta| \\
 & \simeq |B_\eta| \times |L^\eta| = |B_\eta \times L^\eta| = |\lst(\eta) -_\D \Ima(\eta)|
\end{split}
\end{equation*}
We use this homotopy inductively on all simplices of $Z$, starting from dimension $0$ up to dimension $\dim(Z)$, and in this way we obtain
$$|Sd^2(\Dm)-_\D Z | \simeq |Sd^2(\Dm)| - |Z| $$
The remainder equivalence $|Sd^2(\Dm)| \simeq |Sd^2(\Dm)| - |Z|$ is obvious since we have $|Z| \subset \partial |\Dm| $.

The same process will work for $X$ being union of $m$-simplices, although it might not be obvious that we can iterate the homotopies to get to $|X|-|Z|$, for this consider the following
\begin{equation*}
\begin{split}
|Sd^2(X)-_\D Z| & = | \underset{\tau \in X-Z}{\bigcup}\lst(\tau) \cup \underset{\tau \in Z}{\bigcup}\lst(\tau) -_\D \underset{\tau \in Z}{\bigcup}\Ima(\tau)| \\
 & = \underset{\tau \in X-Z}{\bigcup}|\lst(\tau)| \cup \underset{\tau \in Z}{\bigcup}|\lst(\tau) -_\D \Ima(\tau)| \\
 & \simeq  \underset{\tau \in X-Z}{\bigcup}|\lst(\tau)| \cup \underset{\tau \in Z}{\bigcup}|\lst(\tau)| - |\Ima(\tau)| = |X|-|Z|
\end{split}
\end{equation*}
\end{proof}

We can now prove the last part of the result.

\begin{lemma}
In the setting given before, we have that for all $k \geq 0$
$$ |\mst{k} -_\D \Ima(\tau_{k+1})| \simeq |\mst{k}| - |\Ima(\tau_{k+1})|$$
\end{lemma}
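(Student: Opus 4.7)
The plan is to reduce the equivalence to a topological statement inside $cL^\s$, exploiting the product decomposition $\lsts \simeq B_\s \times cL^\s$ from lemma \ref{lemma 625}.

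First, I would verify that $\mst{k}$ inherits a product structure under this decomposition. Using the splitting $L^\s = \bigcup_i L^\s_{\eta_i}$ from the preceding lemma, together with lemma \ref{lemma 621}, one shows that $\mst{k}$ corresponds to $B_\s \times R_k$, where $R_k \leq cL^\s$ is obtained by combinatorially removing the sub-cones corresponding to $\tau_1, \ldots, \tau_k$. The image of $\Ima(\tau_{k+1}) \cap \mst{k}$ under the product isomorphism is similarly of the form $B_\s \times Z_{k+1}$, with $Z_{k+1} \leq R_k$ identified (via $L^\s_{\tau_{k+1}} \simeq Sd^2(\D^{m-k-1})$) as the intersection of $R_k$ with the sub-cone coming from $\tau_{k+1}$.

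Having this bookkeeping in hand, the claim $|\mst{k} -_\D \Ima(\tau_{k+1})| \simeq |\mst{k}| - |\Ima(\tau_{k+1})|$ splits off the $B_\s$ factor and reduces to $|R_k -_\D Z_{k+1}| \simeq |R_k| - |Z_{k+1}|$ inside $cL^\s$. For $\tau_{k+1} = \s$ (the final step, $k+1 = l$), the subcomplex $Z_{k+1}$ reduces to the cone point of $cL^\s$ and the argument is exactly that of lemma \ref{lemma 6210}. For $\tau_{k+1} \neq \s$, the subcomplex $Z_{k+1}$ sits on the boundary of the sub-cone corresponding to $\tau_{k+1}$, which is a union of top-dimensional simplices, so the already-proved lemma $|Sd^2(Y) -_\D Z| \simeq |Sd^2(Y)| - |Z|$ for $Y$ a union of $m$-simplices with $Z \leq \partial Y$ applies after a suitable choice of $Y$.

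The main obstacle is the bookkeeping itself: obtaining $\mst{k} \simeq B_\s \times R_k$ requires that the successive $-_\D$ subtractions of $\Ima(\tau_1), \ldots, \Ima(\tau_k)$ respect the product, which in turn reduces to combinatorial disjointness of the pieces $L^\s_{\tau_i}$ inside $L^\s$. This disjointness follows by transporting lemma \ref{lemma 621}(3) through the identifications of the preceding lemmas. Once it is established, stratification compatibility of the resulting homotopy equivalence is automatic, since all motion happens in the $cL^\s$-direction, which parameterizes the direction transverse to $|X_m|$ inside the distinguished neighborhood of $b_\s$.
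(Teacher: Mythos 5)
Your proposal is correct and follows essentially the same route as the paper: split off the $B_\s$ factor via the identification $\lsts \simeq \G(S_-(\s)\times S_+(\s))$ from lemma \ref{lemma 625}, observe that each subtraction $-_\D \Ima(\tau_i)$ removes a product set of vertices and hence acts only on the $cL^\s$ factor, handle the cone point (the case $\tau=\s$) by lemma \ref{lemma 6210}, and dispose of the remaining pieces, which lie in $\partial L^\s$ with $L^\s$ a union of top-dimensional simplices, by the preceding lemma, taking $\mathrm{Id}\times -$ for the homotopy and lemma \ref{lemma 621} for disjointness and stratification compatibility. Your explicit bookkeeping $\mst{k}\simeq B_\s\times R_k$ is just a more carefully spelled-out version of what the paper does implicitly, so no substantive difference.
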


Recalling lemma \ref{lemma 628}, this will give us the final result.
\begin{proof}
Recall by lemma \ref{lemma 625} that
$$\lsts \simeq \G(S_-(\s)\times S_+(\s))$$
via the identification
$$\{\s_0, \dots, \s_m, \s, \tau_0, \dots, \tau_l \} \mapsto (\{\s_0, \dots, \s_m, \s \}, \{ \s, \tau_0, \dots, \tau_l \}  )$$
Now, when we perform $-_\D \Ima(\tau)$ we are subtracting vertices of the form 
$$( \{ \s_0, \dots, \s \}, \{\s, \tau_0, \dots, \tau_l \})$$ With $\tau_l < \tau$, so we have that
\begin{equation*}
\begin{split}
\lst(\s)-_\D \Ima(\tau) &  = \G(S_-(\s)) \times \G(S_+(\s)-\{A \in S_+(\s)| max(A)<\tau \}) \\
 & \simeq B_\s \times (L^\s -_\D \G(\{A \in S_+(\s)| max(A)<\tau \}) )
\end{split}
\end{equation*}
Now, since $\dim(\tau)<n$ then $\G(\{A \in S_+(\s)| max(A)<\tau \}) \leq \partial L^\s$, and since $L^\s$ is a union of $m$-simplices, we can use the previous lemma to conclude (Observe here that taking off all the simplices finally will take a subcomplex contained in the frontier of $L^\s$, which does not change the homotopy type (which was the hole point of last lemma). We also take $Id \times -$ to create the homotopy).
\end{proof}

\newpage
\pagenumbering{roman}
\setcounter{page}{1}
\printbibliography

\end{document}